\newtheorem{thm}{Theorem}[section]
\newaliascnt{lem}{thm}  
\newtheorem{lem}[lem]{Lemma}
\newaliascnt{qst}{thm}  
\newtheorem{qst}[qst]{Question}
\newaliascnt{prp}{thm}  
\newtheorem{prp}[prp]{Proposition}
\newaliascnt{cor}{thm}  
\newtheorem{cor}[cor]{Corollary}
\theoremstyle{definition}
\newaliascnt{dfn}{thm}  
\newtheorem{dfn}[dfn]{Definition}
\numberwithin{equation}{section}
\author{Tristan Bice}
\address{
York University\\
Toronto\\
Canada
}
\email{Tristan.Bice@gmail.com}
\thanks{This research has been supported by a York University Post-Doctoral Fellowship}
\keywords{C*-Algebras, Real Rank Zero, Projections}
\subjclass[2010]{Primary: 46L05, 47A05, 47A60, 47A63; Secondary: 46L10, 46L30, 47A10, 47A46, 47A67}
\begin{document}

\title{The Projection Calculus}

\begin{abstract}
We develop some tools for manipulating and constructing projections in C*-algebras.  These are then applied to give short proofs of some standard projection homotopy results, as well as strengthen some fundamental classical results for C*-algebras of real rank zero, specifically on liftings, excising pure states and Kadison's transitivity theorem.  Lastly, we investigate some order properties of the set of projections in C*-algebras of real rank zero, building on the work in \cite{Bice2009}.
\end{abstract}
\maketitle

\section{Introduction}

\subsection{Motivation}

As every operator algebraist knows, student and researcher alike, the continuous functional caluculus is an indispensable tool for doing anything beyond the bare basics of C*-algebra theory.  What has not been known until now (or at least not made explicit in the literature as far as we can tell) is that there is a projection analog that is equally powerful when applied to C*-algebras containing many projections, like those of real rank zero.  Our purpose in this paper is to develop this projection calculus and show how it can be applied to simplify and strengthen a number of previous results regarding projections in C*-algebras.

To motivate this, let us first back up and review a little C*-algebra history.  One of the first kinds of C*-algebras to be studied were those consisting of all continuous functions on a compact topological space $X$ with pointwise addition and multiplication together with the supremum norm, denoted by $C(X)$.  The celebrated Gelfand representation theorem tells us that in fact every unital commutative C*-algebra is (isometrically) isomorphic to one of this form.  While these commutative C*-algebras were originally studied for their own sake, it was soon realized that Gelfand's theorem yields a powerful functional calculus that can be applied to any normal operator even in a non-commutative C*-algebra.  Specifically, we take a normal operator $T\in A$ and note that the unital C*-subalgebra it generates is isomorphic to $C(\sigma(T))$, where $\sigma(T)$ is the spectrum of $T$.  Under this isomorphism, $T$ corresponds to the identity function $\mathrm{id}\in C(\sigma(T))$ and, for any $f\in C(\sigma(T))$, there is a corresponding operator $f(T)\in A$.  Any algebraic or norm relation that holds between $\mathrm{id}$ and $f$ in $C(\sigma(T))$ will of course still hold between $T$ and $f(T)$ in $A$, and so this calculus gives us a tool for constructing many new operators in $A$ having a desired relationship to the given $T$.

Fast forward a few decades and we start to see another elementary kind of C*-algebra being studied primarily for its own intrinsic interest, namely the C*-algebra generated by two projections $Q$ and $R$.  In \cite{Pedersen1968}, it is shown that such a C*-algebra is always isomorphic to a certain subalgebra of $C(\sigma(QR))\otimes M_2$ (which can be viewed as either $M_2(C(\sigma(QR)))$ or $C(\sigma(QR),M_2)$), and this is taken a step further in \cite{RaeburnSinclair1989}, where another proof of this result and some applications are also given.  We take this work to its natural conclusion, giving a general framework for these results to be applied in a similar manner to the classical continuous functional calculus.  For this identification of the C*-subalgebra of $A$ generated by $Q$ and $R$ allows us to construct many new operators in $A$, projections in particular, that have a desired relationship to the given $Q$ and $R$.  Specifically, we show how to construct, for any $f\in C(\sigma(PQ))$ with $\mathrm{ran}(f)\subseteq[0,1]$, $f(0)=0$ and $f(1)=1$ (if $1\in\sigma(QR)$), a projection $P=P_{Q,R,f}$ such that \[QPQ=f(QRQ).\]  Furthermore, $P$ will be Murray-von Neumann equivalent to $R$ and the construction will be continuous in $Q$, $R$ and $f$.

When put into its historical context in this way, the projection calculus seems like a very natural thing to develop, and the ideas underlying it are most likely already known at least intuitively by many researchers in operator algebras.  However, we believe this is the first time it has been made explicit, and the benefit of doing this is that it allows various generalizations and simplifications of previous results to be accomplished with relative ease (for example, see \S\ref{PH} below).

\subsection{Outline}

Before launching into the mathematics proper, we outline the structure of this article in a little more detail.  Firstly, in \S\ref{pre}, we mention and prove some basic facts that will be needed for the work that follows (primarily for \S\S\ref{PB} and \ref{ESKT}, so those wishing to see a quick derivation and application of the projection calculus should jump straight to \S\S\ref{PFC} and \ref{PH}).  Much of this material will be familiar, or at least intuitively obvious, to anyone with some knowledge of C*-algebras.  However, our approach using support projections and quasi-inverses is perhaps somewhat novel and allows for an expedient development of the necessary results (compare our simple derivation of the formula for the norm of an idempotent in (\ref{||id||}) with that in \cite{KolihaRakocevic2004}, for example).

In \S\ref{PFC} we do the aforementioned construction of a projection $P$ from a given pair of projections $Q$ and $R$, together with a function $f\in C(\sigma(QR))$ with $\mathrm{ran}(f)\subseteq[0,1]$, $f(0)=0$ and $f(1)=1$ (if $1\in\sigma(QR)$).  While it would be possible to do this using the description of the C*-subalgebra generated by $Q$ and $R$ as a subalgebra of $C(\sigma(QR))\otimes M_2$, as shown in \cite{AnoussisKatavolosTodorov2007}, \cite{Pedersen1968} or \cite{RaeburnSinclair1989}, we take a more elementary approach, using nothing more than the usual continuous functional calculus (showing that the projection calculus is not only similar to, but can also be derived from, the usual continuous functional calculus).  This has the advantage of making our exposition more elementary and self-contained, as well as easing some of the necessary calculations.  

As an immediate demonstration of the power of this new tool, in \S\ref{PH} we show how it can be used to simplify the proofs of two standard theorems about the existence of projection homotopies.  The projection calculus is then further applied in \S\ref{PB} to produce some strong lifting results for C*-algebras of real rank zero.  While also of independent interest, we suspect these lifting results will be useful in studying how certain properties of C*-algebras of real rank zero are preserved under homomorphisms.

It is in \S\ref{ESKT} that we give the most interesting application of the projection calculus we have developed thus far, using it to strengthen, in the real rank zero case, two fundamental results in C*-algebra theory.  The first of these says that pure states on C*-algebras of real rank zero can be excised exactly on projections.  This, in turn, allows us to prove a strong version of Kadison's transitivity theorem for C*-algebras of real rank zero, showing that irreducible representations are not just onto arbitrary finite dimensional subalgebras but also one-to-one when restricted to appropriate subalgebras.

In the final section we examine the canonical order on projections in C*-algebras of real rank zero, extending the work of \cite{Bice2009}.  It seemed appropriate to include these results in a paper on projections in C*-algebras of real rank zero, even though it does not require the projection calculus developed in \S\ref{PFC}.  It is, however, made easier with the notation and theory developed in \S\ref{pre}.

\section{Preliminaries}\label{pre}

\subsection{Well-Supported Operators and Quasi-Inverses}

The following definition is taken from \cite{Blackadar2006} II.3.2.8, and other equivalants can be found in \cite{Blackadar2006} II.3.2.11.  We denote the range and kernel of an operator $T$ by $\mathcal{R}(T)$ and $\mathcal{N}(T)$ respectively.
\begin{dfn}\label{wellsup}
We say an operator $T$ on a Hilbert space $H$ is \emph{well-supported} if any of the following equivalent conditions hold.
\begin{enumerate}
\item\label{ws1} $\inf(\sigma(TT^*)\backslash\{0\})>0$.
\item $\mathcal{R}(T)$ is closed.
\item $\inf_{v\in\mathcal{N}(T)^\perp,||v||=1}||Tv||>0$.
\end{enumerate}
\end{dfn}
One simple observation that will be used later is the following.  If $P$, $T$ and $S$ are operators such that $P$ is a projection and $PTS=P$ then $PT$ is well-supported, which follows from the fact $\mathcal{R}(P)$ is closed and $\mathcal{R}(P)=\mathcal{R}(PTS)\subseteq\mathcal{R}(PT)\subseteq\mathcal{R}(P)$.

We see that \ref{ws1} can be used as the definition of a well-supported element $T$ of an abstract C*-algebra $A$.  For well-supported $T\in A$, the characterstic function $\chi$ of the interval $(0,\infty)$ will be continuous on $\sigma(TT^*)$ and we can define $[T]=\chi(TT^*)\in A$.\footnote{This notation comes from \cite{Pedersen1979}, although there $[T]$ is used to denote the projection onto the closure of the range of an arbitrary element $T$ of a von Neumann algebra.  We use it in the more general context of C*-algebras but only for well-supported $T$.}  This $[T]$ is the \emph{left support projection} of $T$ and, with respect to any (faithful) representation of $A$, we have \[\mathcal{R}([T])=\mathcal{R}(T).\]   

Also, in what follows, we use the spectral family notation from \cite{Weidmann1980} so, for any self-adjoint operator $S$ on a Hilbert space $H$, $E_S(t)$ refers to the spectral projection of $S$ corresponding to the interval $(-\infty,t]$ and, likewise, $E_S(t-)$ refers to the spectral projection of $S$ corresponding to the interval $(-\infty,t)$.  We also write $E^\perp_S(t)$ for $E_S(t)^\perp=1-E_S(t)=$ the spectral projection of $T$ corresponding to the interval $(t,\infty)$.  The following proposition is a generalization of the easily verified fact that, for any $\lambda>0$, an operator $T$ on a Hilbert space takes $\lambda$-eigenvectors of $T^*T$ to $\lambda$-eigenvectors of $TT^*$.

\begin{prp}\label{TT*}
For any Hilbert space $H$, $T\in\mathcal{B}(H)$ and $t>0$, we have $E^\perp_{TT^*}(t)=[TE^\perp_{T^*T}(t)]$.
\end{prp}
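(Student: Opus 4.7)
The plan is to compute $(TP)(TP)^*$ with $P:=E^\perp_{T^*T}(t)$, show that it equals $TT^*Q$ with $Q:=E^\perp_{TT^*}(t)$, and then read off $[TP]=Q$ by applying the continuous functional calculus to this product.

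The first ingredient is the intertwining identity $TP=QT$.  Since $T(T^*T)^n=(TT^*)^nT$ for every $n$, induction and linearity give $Tp(T^*T)=p(TT^*)T$ for every polynomial $p$, and Weierstrass extends this to $Tf(T^*T)=f(TT^*)T$ for every continuous $f$ on $[0,\|T\|^2]$.  Approximating $\chi_{(t,\infty)}$ by a bounded sequence of such continuous functions (unproblematic since $t>0$ keeps us away from the spectral value $0$) and passing to the strong operator limit---or equivalently, invoking the Borel functional calculus in $\mathcal{B}(H)$---yields $TP=QT$.

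Using this identity and the fact that $Q$ commutes with $TT^*$,
\[(TP)(TP)^*\;=\;TPT^*\;=\;QTT^*\;=\;TT^*Q.\]
By construction $TT^*\geq tI$ on $\mathcal{R}(Q)$, so $TT^*Q$ is block-diagonal with respect to $\mathcal{R}(Q)\oplus\mathcal{R}(Q)^\perp$ with spectrum contained in $\{0\}\cup[t,\|TT^*\|]$.  In particular $\inf(\sigma((TP)(TP)^*)\setminus\{0\})\geq t>0$, so $TP$ is well-supported, and $\chi_{(0,\infty)}$ is continuous on this spectrum.  The continuous functional calculus then sends the $\mathcal{R}(Q)$-block to the identity and the $\mathcal{R}(Q)^\perp$-block to $0$, giving $[TP]=\chi_{(0,\infty)}(TT^*Q)=Q$.

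The only delicate step is the extension from continuous functions to the jump function $\chi_{(t,\infty)}$ when deriving $TP=QT$; the strict positivity of $t$ makes this a routine bounded approximation, and it is the only point in the argument where anything beyond continuous functional calculus is used.
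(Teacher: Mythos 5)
Your proof is correct, but it takes a genuinely different route from the paper's. The paper argues spatially: it first notes that $\mathcal{R}(TE_{T^*T}(t))\perp\mathcal{R}(TE^\perp_{T^*T}(t))$, and then compares quadratic forms, showing $\langle TT^*Tv,Tv\rangle\leq t||Tv||^2$ for $v\in\mathcal{R}(E_{T^*T}(t))$ and $>t||Tv||^2$ for nonzero $v\in\mathcal{R}(E^\perp_{T^*T}(t))$, via the positivity of $T^*T(t-T^*T)E_{T^*T}(t)$ and $T^*T(T^*T-t)E^\perp_{T^*T}(t)$; no intertwining identity is used, and in fact the paper derives the continuous intertwining relation (\autoref{g(TT^*)T}) as a corollary of \autoref{TT*}. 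You go in the opposite logical direction: you prove the Borel intertwining $Tf(T^*T)=f(TT^*)T$ from scratch (polynomials, Weierstrass, bounded pointwise limits and the spectral theorem), specialize to $f=\chi_{(t,\infty)}$ to get $TP=QT$, and read off $[TP]=Q$ from $(TP)(TP)^*=TT^*Q$ and its block structure. This is not circular, since you do not quote \autoref{g(TT^*)T} but reprove a stronger (Borel) version of it, and the use of Borel calculus is legitimate because the statement lives in $\mathcal{B}(H)$, where the spectral projections themselves already require it; the paper's proof, by contrast, stays with elementary spectral-subspace positivity. A bonus of your route is that it makes explicit why $TE^\perp_{T^*T}(t)$ is well-supported (the spectrum of $TT^*Q$ lies in $\{0\}\cup[t,||TT^*||]$), a point the paper's statement needs but leaves implicit. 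One cosmetic remark: the delicate point in your approximation step is the jump at $t$, not the value $0$ --- take continuous $f_n$ vanishing on $(-\infty,t]$ and equal to $1$ on $[t+1/n,\infty)$, so $f_n\to\chi_{(t,\infty)}$ pointwise and boundedly; the hypothesis $t>0$ is what you actually need later, to make $\chi_{(0,\infty)}$ continuous on $\{0\}\cup[t,||TT^*||]$.
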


\begin{proof} First note $\mathcal{R}(T^*TE_{T^*T}(t))\subseteq\mathcal{R}(E_{T^*T}(t))\perp\mathcal{R}(E^\perp_{T^*T}(t))$ so $\mathcal{R}(TE_{T^*T}(t))\perp\mathcal{R}(TE^\perp_{T^*T}(t))$.  Also, $\mathcal{R}(E_{TT^*}(0))=\mathcal{R}(T)^\perp$, so we just need to show $\langle TT^*Tv,Tv\rangle\leq t\langle Tv,Tv\rangle$, for $v\in\mathcal{R}(E_{T^*T}(t))$, and $\langle TT^*Tv,Tv\rangle> t\langle Tv,Tv\rangle$, for $v\in\mathcal{R}(E^\perp_{T^*T}(t))\backslash\{0\}$.  But this follows from the immediately verified fact that $T^*T(t-T^*T)E_{T^*T}(t)$ and $T^*T(T^*T-t)E^\perp_{T^*T}(t)$ are positive and strictly positive operators respectively.
\end{proof}

The following corollary provides us with a simple trick that will be very useful in manipulating operator expressions involving the continuous functional calculus.

\begin{cor}\label{g(TT^*)T}
For any C*-algebra $A$, $T\in A$ and continuous $g$ on $\mathbb{R}_+$, $Tg(T^*T)=g(TT^*)T$.
\end{cor}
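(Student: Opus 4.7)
The plan is to reduce the identity to the polynomial case and then pass to continuous $g$ by uniform approximation, using the norm-continuity of the continuous functional calculus.

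First, I would verify the monomial identity $T(T^*T)^n = (TT^*)^n T$ by induction on $n \geq 0$. The case $n=0$ is trivial; for the inductive step,
\[
(TT^*)^{n+1} T = (TT^*)^n (TT^*T) = (TT^*)^n T(T^*T) = T(T^*T)^n (T^*T) = T(T^*T)^{n+1},
\]
where the first and second equalities are pure associativity and the third invokes the inductive hypothesis. Linearity then upgrades this to $Tp(T^*T) = p(TT^*)T$ for every polynomial $p$.

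Next I would pass to continuous $g$. Since $T$ is bounded, the positive operators $T^*T$ and $TT^*$ both have spectrum inside the compact interval $K = [0,\|T\|^2] \subseteq \mathbb{R}_+$, so by the Weierstrass approximation theorem there exist polynomials $p_n$ with $p_n \to g$ uniformly on $K$. Norm-continuity of the functional calculus then gives $\|p_n(T^*T) - g(T^*T)\| \to 0$ and $\|p_n(TT^*) - g(TT^*)\| \to 0$; passing to the limit in $Tp_n(T^*T) = p_n(TT^*)T$ yields $Tg(T^*T) = g(TT^*)T$, as desired.

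There is no real obstacle — the only point of care is to fix a single compact $K$ containing both spectra before invoking Weierstrass, which is automatic from the boundedness of $T$. As an alternative route (more in the spirit of \autoref{TT*}), one notes that in any faithful representation $\mathcal{R}(TE^\perp_{T^*T}(t))=\mathcal{R}(E^\perp_{TT^*}(t))$ and (from the proof of the proposition) $\mathcal{R}(TE_{T^*T}(t))$ is orthogonal to this; decomposing $T = TE^\perp_{T^*T}(t)+TE_{T^*T}(t)$ and multiplying on the left by $E^\perp_{TT^*}(t)$ yields $E^\perp_{TT^*}(t)T = TE^\perp_{T^*T}(t)$, and a spectral approximation of $g$ by step functions in the spectral family then delivers the corollary. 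The polynomial route is, however, cleaner and strictly more elementary.
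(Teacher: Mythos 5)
Your polynomial argument is correct, but it is a genuinely different route from the paper's. The paper deduces the corollary directly from \autoref{TT*}: representing $A$ on a Hilbert space, both $Tg(T^*T)$ and $g(TT^*)T$ annihilate $\mathcal{N}(T)=\mathcal{N}(T^*T)$, and they agree on $\mathcal{N}(T)^\perp$ because the proposition identifies the spectral families of $T^*T$ and $TT^*$ under the action of $T$ \textendash\, a two-line spatial argument that exploits the result the author has just proved. Your main route instead verifies the monomial identity $T(T^*T)^n=(TT^*)^nT$ by induction, extends by linearity to polynomials, and passes to continuous $g$ via Weierstrass approximation on $K=[0,\|T\|^2]$ together with norm-continuity of the functional calculus; this is self-contained, purely algebraic, needs no representation and no spectral projections, and is arguably the more elementary proof (its only small point of care, correctly noted, is fixing one compact set containing both spectra, and, in the non-unital case, interpreting constant terms in the unitization). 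Your sketched ``alternative route'' is closer in spirit to the paper, though the paper's version is slicker still, reducing everything to agreement on $\mathcal{N}(T)$ and $\mathcal{N}(T)^\perp$. In short: both proofs are valid; yours trades the paper's economy (reusing \autoref{TT*}) for independence from it.
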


\begin{proof}
Representing $A$ on a Hilbert space, we immediately see that $Tg(T^*T)$ and $g(TT^*)T$ both map $\mathcal{N}(T)=\mathcal{N}(T^*T)$ to $0$, while they also agree on $\mathcal{N}(T)^\perp$, by \autoref{TT*}.
\end{proof}

Let $f$ be the function on non-negative reals satisfying $f(0)=0$ and $f(t)=1/t$, for $t>0$.  Then $f$ is continuous on the spectrum of any well-supported positive operator and hence, for any such $S$ in a C*-algebra $A$, we have another operator $f(S)\in A$ which we will denote by $S^{-1}$.  As we shall see, this is quite a convenient convention, although we must be careful to keep in mind now that the notation $S^{-1}$ \emph{does not necessarily imply that $S$ is invertible}, only that it is well-supported (although $S^{-1}$ will indeed be the inverse when $S$ is invertible).  So in general we only have $SS^{-1}=[S]=[S^{-1}]=S^{-1}S$ (rather than the usual $SS^{-1}=1=S^{-1}S$).  We can even extend this to non-self-adjoint (but still well-supported) $T$ by defining \[T^{-1}=T^*(TT^*)^{-1}.\]  Then $TT^{-1}=[T]$ and, by applying \autoref{g(TT^*)T}, $T^{-1}T=[T^*]=[T^{-1}]$.  Also, $T^{-1}[T]=T^{-1}=[T^*]T^{-1}$ which, if $T=SP$ for some $S$ and projection $P$, means that $P(SP)^{-1}=(SP)^{-1}$ and $(PS)^{-1}P=(PS)^{-1}$.  Thus $TT^{-1}T=[T]T=T$ and $T^{-1}TT^{-1}=T^{-1}[T]=T^{-1}$, showing that $T^{-1}$ is the \emph{quasi-inverse} of $T$ in the ring-theoretic sense.  In fact, the well-supported elements of $A$ are precisely those with a quasi-inverse, as noted in \cite{Blackadar2006} II.3.2.10.\footnote{Other common names for the quasi-inverse are \emph{Moore-Pensrose inverse} (see \cite{Penrose1955}), \emph{generalized inverse} or \emph{pseudoinverse}, often denoted by $T^\dagger$ rather than $T^{-1}$ as we have done here.  There is a substantial body of literature on these (see \cite{Ben-IsraelGreville2003}), although mostly dealing with finite matrices, and sometimes von Neumann algebras, rather than the C*-algebras we consider here.}

If $TS=[T]$ then $T^{-1}TS=T^{-1}[T]$ and hence $[T^*]S=T^{-1}$.  On the other hand, if $T^{-1}=[T^*]S=T^{-1}TS$ then $[T]=TT^{-1}=TT^{-1}TS=[T]TS=TS$, i.e.
\begin{equation}\label{qinv}
TS=[T]\qquad\Leftrightarrow\qquad[T^*]S=T^{-1}.
\end{equation}
In particular, as $T^{-1}T=[T^*]=[T^{-1}]$, this means that $(T^{-1})^{-1}=[T^{-1*}]T=[T]T=T$.  Moreover, we can calculate the norm of $||T^{-1}||$ by first noting that
\[T^{-1*}T^{-1}=(TT^*)^{-1}TT^*(TT^*)^{-1}=(TT^*)^{-1}[TT^*]=(TT^*)^{-1}\textrm{ and hence, if $T\neq0$,}\]
\begin{equation}\label{||T^{-1}||}
||T^{-1}||^2=||(TT^*)^{-1}||=1/\min(\sigma(TT^*)\backslash\{0\}).
\end{equation}
As mentioned in \cite{Blackadar2006} II.3.2.9, we also have a polar decomposition for well-supported $T$.  Specifically, defining $|T|=\sqrt{T^*T}$ and $U=U_T=T|T|^{-1}$, we see that $T=U|T|$,
\begin{eqnarray*}
UU^* &=& T(T^*T)^{-1/2}(T^*T)^{-1/2}T^*=T(T^*T)^{-1}T^*=(TT^*)^{-1}TT^*=[TT^*]=[T],\\
U^*U &=& (T^*T)^{-1/2}T^*T(T^*T)^{-1/2}=[T^*T]=[T^*]\textrm{ and}\\
T^*U &=& T^*T(T^*T)^{-1/2}=|T|\in A_+
\end{eqnarray*}

\subsection{Projections I}

In what follows we will use the following elementary facts.  Firstly, for any C*-algebra $A$ and $S,T\in A$, $\sigma(ST)\backslash\{0\}=\sigma(TS)\backslash\{0\}$.  For $P,Q\in\mathcal{P}(A)$ (or even arbitrary idempotent $P,Q\in A$), we have $0\notin\sigma(PQ)\Leftrightarrow P=1=Q$ and hence \[\sigma(PQP)=\sigma(PPQ)=\sigma(PQ)=\sigma(QP)=\sigma(QQP)=\sigma(QPQ).\]  As $PQ^\perp P=P(1-PQP)P$, $\sigma(PQ)\cap(0,1)=1-\sigma(PQ^\perp)\cap(0,1)$ which, applied twice, gives \[\sigma(PQ)\cap(0,1)=\sigma(P^\perp Q^\perp)\cap(0,1).\]  Also, $(P-Q)=PQ^\perp-P^\perp Q$ so \[||P-Q||=\max(||PQ^\perp||,||P^\perp Q||).\]  In fact, as $\max(\sigma(PQ^\perp)\backslash\{1\})=\max(\sigma(P^\perp Q)\backslash\{1\})$, \[||P-Q||<1\quad\Rightarrow\quad||P-Q||=||PQ^\perp||=||P^\perp Q||.\]  Furthermore, \[||PQ^\perp||<1\quad\Leftrightarrow\quad PQ\textrm{ is well-supported and }[PQ]=P.\]  If, instead, $||P^\perp Q||<1$ then $PQ$ is again well-supported (because $QP$ is) although we may not have $[PQ]=P$ (but $[PQ]$ will be a continuous function in this case \textendash\, see \autoref{[]cont}).

Also note that if $P$ and $Q$ are projections with $||P^\perp Q||<1$ then $PQ$ is well-supported and
\begin{eqnarray*}
[PQ]Q^\perp[PQ] &=& [PQ]PQ^\perp P[PQ]=[PQ]-PQP[PQ]=(1-PQP)[PQP],\textrm{ hence}\\
||[PQ]Q^\perp||^2 &=& ||(1-PQP)[PQP]||=1-\min(\sigma(PQ)\backslash\{0\})=\max(\sigma(P^\perp Q)\backslash\{1\})=||P^\perp Q||^2.
\end{eqnarray*}
Furthermore, $[PQ]^\perp Q=Q-[PQ]Q=Q-PQ=P^\perp Q$, so
\begin{equation}\label{||Q-[PQ]||}
||Q-[PQ]||=\max(||[PQ]Q^\perp||,||[PQ]^\perp Q||)=||P^\perp Q||.
\end{equation}
Also, $(P-[PQ])Q=PQ-[PQ]PQ=PQ-PQ=0$, so $P-[PQ]+Q$ is a projection and \[||P-[PQ]+Q-P||=||Q-[PQ]||=||P^\perp Q||.\]  In particular, this observation simplifies and strengthens the Proposition in \cite{Blackadar2006} II.3.3.5.

\begin{prp}\label{PveeQ}
Take a C*-algebra $A$ and $P,Q\in\mathcal{P}(A)$ with $||PQ||<1$.  Then there exists $P\vee Q\in\mathcal{P}(A)$ such that, w.r.t. any representation, $\mathcal{R}(P\vee Q)=\mathcal{R}(P)+\mathcal{R}(Q)$.
\end{prp}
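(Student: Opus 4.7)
The plan is to set $P\vee Q:=P+[P^\perp Q]$ and verify the needed properties in three steps.

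First, the crux: showing $P^\perp Q$ is well-supported, so that $[P^\perp Q]\in A$.  Since $(P^\perp Q)(P^\perp Q)^*=P^\perp QP^\perp$, it suffices to show $\sigma(P^\perp QP^\perp)\setminus\{0\}$ is bounded away from $0$.  Using $\sigma(P^\perp QP^\perp)\setminus\{0\}=\sigma(P^\perp Q)\setminus\{0\}$, the identity $\sigma(P^\perp Q)\cap(0,1)=1-\sigma(PQ)\cap(0,1)$ (which follows, upon swapping the roles of $P$ and $Q$, from the computation $\sigma(PQ^\perp)\cap(0,1)=1-\sigma(PQ)\cap(0,1)$ in the preceding discussion), together with $\sigma(PQ)\setminus\{0\}=\sigma(PQP)\setminus\{0\}\subseteq(0,||PQ||^2]$, I obtain $\sigma(P^\perp QP^\perp)\setminus\{0\}\subseteq[1-||PQ||^2,1]$, which is bounded below by $1-||PQ||^2>0$ thanks to the hypothesis $||PQ||<1$.

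Second, for orthogonality of $P$ and $[P^\perp Q]$: invoking the quasi-inverse formalism, $[P^\perp Q]=[P^\perp QP^\perp]=(P^\perp QP^\perp)(P^\perp QP^\perp)^{-1}$; since the leftmost factor of this product is $P^\perp$, multiplication by $P$ on the left annihilates it, giving $P[P^\perp Q]=0$.  Hence $P+[P^\perp Q]$ is the sum of two orthogonal projections in $A$, and so is itself a projection.

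Finally, for the range identity in any representation: orthogonality gives $\mathcal{R}(P\vee Q)=\mathcal{R}(P)\oplus\mathcal{R}([P^\perp Q])$, and well-supportedness gives $\mathcal{R}([P^\perp Q])=\mathcal{R}(P^\perp Q)=P^\perp\mathcal{R}(Q)$.  The equality $\mathcal{R}(P)+P^\perp\mathcal{R}(Q)=\mathcal{R}(P)+\mathcal{R}(Q)$ follows from $P^\perp v=v-Pv\in\mathcal{R}(P)+\mathcal{R}(Q)$ for $v\in\mathcal{R}(Q)$, together with the decomposition $v=Pv+P^\perp Qv\in\mathcal{R}(P)+P^\perp\mathcal{R}(Q)$ for such $v$.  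The main obstacle, as noted, lies in the first step; everything thereafter is a clean application of the well-support and quasi-inverse machinery developed in the preliminaries.
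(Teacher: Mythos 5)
Your proof is correct, and it takes a genuinely different route from the paper's. The paper defines $P\vee Q=[1-P^\perp Q^\perp P^\perp]$, using $\sigma(PQ)\cap(0,1)=\sigma(P^\perp Q^\perp)\cap(0,1)$ and $||PQ||<1$ to get well-supportedness, and then identifies the range in two steps: $\overline{\mathcal{R}(P)+\mathcal{R}(Q)}=(\mathcal{R}(P^\perp)\cap\mathcal{R}(Q^\perp))^\perp=\mathcal{R}(E^\perp_{1-P^\perp Q^\perp P^\perp}(0))$ via spectral projections, and then $\mathcal{R}(P\vee Q)\subseteq\mathcal{R}(P)+\mathcal{R}(Q)$ because $1-P^\perp Q^\perp P^\perp$ is a polynomial in $P$ and $Q$ whose range visibly lies in $\mathcal{R}(P)+\mathcal{R}(Q)$. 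You instead take the non-symmetric decomposition $P\vee Q=P+[P^\perp Q]$: your spectral bound $\sigma(P^\perp Q)\setminus\{0\}\subseteq[1-||PQ||^2,1]$ is a correct application of the preliminary identities, the orthogonality argument via $[P^\perp QP^\perp]=(P^\perp QP^\perp)(P^\perp QP^\perp)^{-1}$ is sound, and the elementary computation $\mathcal{R}(P)\oplus P^\perp\mathcal{R}(Q)=\mathcal{R}(P)+\mathcal{R}(Q)$ closes the argument. What your version buys: it makes it completely transparent that $\mathcal{R}(P)+\mathcal{R}(Q)$ is itself closed (it is exhibited as an orthogonal sum of two closed subspaces), it needs neither the spectral-projection identity for $(\mathcal{R}(P^\perp)\cap\mathcal{R}(Q^\perp))^\perp$ nor the polynomial-range observation, and it is of the same flavour as the later identity (\ref{idPveeQ}). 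What the paper's version buys: the formula $[1-P^\perp Q^\perp P^\perp]$ is manifestly symmetric in $P$ and $Q$ (a fact quietly relied on in the proof of \autoref{veecont}) and simultaneously identifies $P\vee Q$ with the projection onto $(\mathcal{R}(P^\perp)\cap\mathcal{R}(Q^\perp))^\perp$; with your definition the symmetry is not visible from the formula, though it does follow from the range characterization, since the projection onto a given closed subspace is unique.
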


\begin{proof}
As $\sigma(PQ)\cap(0,1)=\sigma(P^\perp Q^\perp)\cap(0,1)$ and $\sqrt{||PQP||}=||PQ||<1$, we see that $1-P^\perp Q^\perp P^\perp$ is well-supported and we may define $P\vee Q=[1-P^\perp Q^\perp P^\perp]\in A$.  But \[\overline{\mathcal{R}(P)+\mathcal{R}(Q)}=(\mathcal{R}(P^\perp)\cap\mathcal{R}(Q^\perp))^\perp=\mathcal{R}(E_{P^\perp Q^\perp P^\perp}(1-))=\mathcal{R}(E^\perp_{1-P^\perp Q^\perp P^\perp}(0))=\mathcal{R}(P\vee Q).\]  Also, as $1-P^\perp Q^\perp P^\perp$ is a polynomial expression of $P$ and $Q$, we have $\mathcal{R}(P\vee Q)\subseteq\mathcal{R}(P)+\mathcal{R}(Q)$.
\end{proof}

\subsection{Idempotents}

Various facts relating idempotents\footnote{In Banach space theory, idempotent operators are sometimes called (oblique) projections, however for us the term projection always means \emph{orthogonal} projection, i.e. not just idempotent but also self-adjoint.} and projections in C*-algebras have been proved and reproved a number of times in the literature (see \cite{KolihaRakocevic2004} for an account of this history).  One classical result, from \cite{Greville1974} and \cite{Penrose1955}, says that idempotent matrices are just quasi-inverses of projection pair products.  This was generalized to operators on any Hilbert space in \cite{CorachMaestripieri2010} and we generalize this to arbitrary C*-algebras here (see also \cite{CorachMaestripieri2011}, for some other characterizations of projection products).

\begin{prp}\label{idin}
Assume $A$ is a C*-algebra.  Then $I\in A$ is idempotent if and only if there exist (necessarily unique) $P,Q\in\mathcal{P}(A)$ such that $||P-Q||<1$ and $I=(PQ)^{-1}$.
\end{prp}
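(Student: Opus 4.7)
The plan is to identify the projections explicitly as $P=[I^*]$ and $Q=[I]$; this yields the converse direction, uniqueness, and (together with the key algebraic identity) the forward direction. For the converse, given idempotent $I\in A$, the range $\mathcal{R}(I)=\mathcal{N}(1-I)$ is closed, so $I$ (and $I^*$) are well-supported and $P:=[I^*]$, $Q:=[I]$ are genuine projections in $A$. The identity $[T]T=T$ applied to $T=I$ gives $QI=I$, and applied to $T=I^*$ (then adjointed) gives $IP=I$. Idempotency of $I$ and of $I^*$ then yields the dual identities $IQ=Q$ and $PI=P$, since $I$ acts as the identity on $\mathcal{R}(Q)=\mathcal{R}(I)$ and $I^*$ acts similarly on $\mathcal{R}(P)$.

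The critical analytic step is the norm bound $||P-Q||<1$. For a unit vector $v\in\mathcal{R}(Q)=\mathcal{R}(I)$, idempotency gives $Iv=v$; combined with $IP=I$ this yields $v=IPv$ and hence $1\le||I||\,||Pv||$, so $||P^\perp v||^2=1-||Pv||^2\le 1-||I||^{-2}<1$. This shows $||P^\perp Q||<1$; applying the same argument to the idempotent $I^*$ (whose analogous support projections are $[I^{**}]=Q$ and $[I^*]=P$) gives $||Q^\perp P||=||PQ^\perp||<1$, and hence $||P-Q||<1$ by the preliminary formula. In particular $[PQ]=P$, and the identity $(PQ)I=P(QI)=PI=P=[PQ]$ combined with (\ref{qinv}) (applied with $T=PQ$ and $S=I$) gives $(PQ)^{-1}=[QP]I=QI=I$, as required.

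For the forward direction, suppose $P,Q$ are projections with $||P-Q||<1$. Then $[PQ]=P$, $[QP]=Q$, and since $P-PQP=(PQ^\perp)(PQ^\perp)^*$ has norm $||PQ^\perp||^2<1$, $PQP$ is well-supported with $[PQP]=P$ and hence $(PQP)^{-1}P=(PQP)^{-1}$. Writing $(PQ)^{-1}=QP(PQP)^{-1}$, the key identity $(PQP)^{-1}QP=(PQP)^{-1}P\cdot QP=(PQP)^{-1}(PQP)=P$ yields
\[
\bigl((PQ)^{-1}\bigr)^2 = QP(PQP)^{-1}QP(PQP)^{-1} = QP(PQP)^{-1} = (PQ)^{-1},
\]
proving that $(PQ)^{-1}$ is idempotent. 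Uniqueness is then immediate: for any $I=(PQ)^{-1}$ with $||P-Q||<1$, the identities $QI=I$ and $IQ=Q$ hold (derived as above from $I=QP(PQP)^{-1}$ together with $IP=I$ and $I\cdot PQ=[QP]=Q$), which force $\mathcal{R}(Q)=\mathcal{R}(I)$ and hence $Q=[I]$; symmetrically $P=[I^*]$.

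I expect the main obstacle to be the norm bound $||P-Q||<1$ in the converse direction, since it is the only step that genuinely invokes Hilbert-space geometry, quantifying the angle between $\mathcal{R}(I)$ and $\mathcal{N}(I)$ in terms of $||I||$. Once this bound is in place, the remaining content is essentially bookkeeping with support projections and a single application of (\ref{qinv}).
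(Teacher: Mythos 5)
Your proof is correct, and its skeleton matches the paper's: you identify $P=[I^*]$, $Q=[I]$, derive $QI=I=IP$, $IQ=Q$, $PI=P$, and finish with (\ref{qinv}) applied to $T=PQ$, $S=I$, exactly as the paper does. The one genuinely different step is how you reach $||P-Q||<1$. The paper never estimates norms directly: from $PQI=P$ and $QPI^*=Q$ it concludes (via the range-inclusion observation after \autoref{wellsup}) that $PQ$ and $QP$ are well-supported with $[PQ]=P$ and $[QP]=Q$, and then invokes the preliminary equivalence $||PQ^\perp||<1\Leftrightarrow[PQ]=P$ to get the norm bound; you instead prove the bound first by the vector estimate $||P^\perp v||^2\leq 1-||I||^{-2}$ for unit $v\in\mathcal{R}(I)$ and then use the same equivalence in the opposite direction to recover $[PQ]=P$, $[QP]=Q$. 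Your route is more quantitative \textendash\, it gives $||P-Q||\leq\sqrt{1-||I||^{-2}}$, consistent with (\ref{||id||}) \textendash\, while the paper's is purely algebraic bookkeeping with support projections. In the forward direction you compute explicitly with $(PQ)^{-1}=QP(PQP)^{-1}$, whereas the paper's two-line argument via $(PQ)^{-1}=Q(PQ)^{-1}=(PQ)^{-1}P$ is slightly more general, needing only that $PQ$ be well-supported rather than $||P-Q||<1$. You also spell out uniqueness, which the paper leaves implicit; your sketch of it is compressed but sound.
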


\begin{proof}
If $P,Q\in A$ and $PQ$ is well-supported then $(PQ)^{-1}=Q(PQ)^{-1}=(PQ)^{-1}P$, and hence \[(PQ)^{-1}(PQ)^{-1}=(PQ)^{-1}PQ(PQ)^{-1}=(PQ)^{-1}[PQ]=(PQ)^{-1},\]
i.e. $(PQ)^{-1}$ is idempotent.  On the other hand, given idempotent $I\in A$ and assuming $A$ is represented on a Hilbert space, we see that $\mathcal{R}(I)=\mathcal{N}(1-I)$ is closed and hence $I$ is well-supported.  Thus $Q=[I]\in\mathcal{P}(A)$ and, likewise, $P=[I^*]\in\mathcal{P}(A)$.  We then have $QI=I=IP$ and $PI^*=I^*=I^*Q$.  As $I$ is idempotent, we also have $IQ=Q=QI^*$ and $I^*P=P=PI$.  Thus $PQI=PI=P$ and $QPI^*=QI^*=Q$, showing that $PQ$ and $QP$ are well-supported (see the observation after \autoref{wellsup}) with $[PQ]=P$ and $[QP]=Q$, and hence $||P-Q||<1$.  It also shows $(PQ)^{-1}=[QP]I=QI=I$ (see (\ref{qinv})).
\end{proof}

Say we have a C*-algebra $A$ and $P,Q\in\mathcal{P}(A)$ with $||PQ||<1$.  Then $Q^\perp P$ is well-supported and $(Q^\perp P)^{-1}$ is an idempotent.\footnote{Although we may have $||Q^\perp-P||=||Q^\perp P^\perp||=1$, in which case $Q^\perp$ will not be the projection appearing on the left in the formula in \autoref{idin}, that will actually be the smaller projection $(P\vee Q)Q^\perp$.}  Also, $(Q^\perp P)^{-1}=(Q^\perp P)^{-1}Q^\perp$, giving $(Q^\perp P)^{-1}Q=0$ and \[(Q^\perp P)^{-1}P=(Q^\perp P)^{-1}Q^\perp P=[(Q^\perp P)^*]=[PQ^\perp]=P.\]  Moreover, $(Q^\perp P)^*(P\vee Q)^\perp=PQ^\perp(P\vee Q)^\perp=P(P\vee Q)^\perp=0$ and hence $(Q^\perp P)^{-1}(P\vee Q)^\perp=0$.  Likewise, we see that $(P^\perp Q)^{-1}P=0$, $(P^\perp Q)^{-1}Q=Q$ and $(P^\perp Q)^{-1}(P\vee Q)^\perp=0$.  Thus $((P^\perp Q)^{-1}+(Q^\perp P)^{-1})P=P$, $((P^\perp Q)^{-1}+(Q^\perp P)^{-1})Q=Q$ and $((P^\perp Q)^{-1}+(Q^\perp P)^{-1})(P\vee Q)^\perp=0$, i.e.
\begin{equation}\label{idPveeQ}
(P^\perp Q)^{-1}+(Q^\perp P)^{-1}=P\vee Q.
\end{equation}
Moreover, $\min(\sigma(P^\perp Q)\backslash\{0\})=\min(\sigma(PQ^\perp)\backslash\{0\})=1-\max(\sigma(PQ))=1-||PQ||^2$ so, by (\ref{||T^{-1}||}),
\begin{equation}\label{||id||}
||(P^\perp Q)^{-1}||=||(Q^\perp P)^{-1}||=1/\sqrt{1-||PQ||^2},
\end{equation}
so long as $P,Q\neq0$.

\subsection{Projections II}

\begin{lem}\label{[]cont}
The function $[PQ]$ is continuous on $\{(P,Q)\in\mathcal{P}(\mathcal{B}(H))^2:||P^\perp Q||<1\}$.
\end{lem}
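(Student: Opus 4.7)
The plan is to realise $[PQ]$ locally as a fixed continuous function applied to the self-adjoint element $PQP$, so that continuity follows from joint continuity of $(P,Q)\mapsto PQP$ together with the norm continuity of the continuous functional calculus for a fixed continuous $f$.  The starting point is the elementary identity $(PQ)(PQ)^*=PQQP=PQP$, which turns the definition $[T]=\chi(TT^*)$ into $[PQ]=\chi(PQP)$, where $\chi$ denotes the characteristic function of $(0,\infty)$.  This $\chi$ is discontinuous only at $0$, so the task is to replace it by a fixed continuous function that still agrees with it on every spectrum $\sigma(PQP)$ arising throughout a neighbourhood of a given $(P_0,Q_0)$.

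The main obstacle, and really the only step with content, is producing such a neighbourhood with a uniform spectral gap at $0$.  For this I would invoke the identity $||P^\perp Q||^2=1-\min(\sigma(PQP)\backslash\{0\})$, which is exactly what the computation immediately preceding (\ref{||Q-[PQ]||}) establishes (together with the general fact $\sigma(PQ)\backslash\{0\}=\sigma(PQP)\backslash\{0\}$ recalled at the start of the Projections I subsection).  Given $(P_0,Q_0)$ with $||P_0^\perp Q_0||<1$, pick $\delta>0$ with $||P_0^\perp Q_0||<1-\delta$; by norm continuity of $(P,Q)\mapsto||P^\perp Q||$ the same strict inequality persists on some open neighbourhood $U$, and there $\min(\sigma(PQP)\backslash\{0\})\geq 2\delta-\delta^2=:\epsilon>0$.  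Hence $\sigma(PQP)\subseteq\{0\}\cup[\epsilon,1]$ uniformly across $(P,Q)\in U$.

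To finish, I would fix any continuous $f\colon[0,1]\to[0,1]$ with $f(0)=0$ and $f\equiv 1$ on $[\epsilon,1]$, such as a piecewise linear ramp.  Because the spectra all lie in $\{0\}\cup[\epsilon,1]$ on $U$, this $f$ agrees with $\chi$ on $\sigma(PQP)$ for every $(P,Q)\in U$, so $[PQ]=f(PQP)$ throughout $U$.  The assignment $(P,Q)\mapsto PQP$ is jointly norm continuous by a trivial polynomial estimate, and for this fixed continuous $f$ the functional calculus $S\mapsto f(S)$ is norm continuous on bounded self-adjoint operators; hence the composition $(P,Q)\mapsto f(PQP)=[PQ]$ is continuous on $U$.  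Since $(P_0,Q_0)$ was arbitrary in the domain, continuity holds globally.
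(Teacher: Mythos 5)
Your proof is correct, but it takes a genuinely different route from the paper. You localize: using $[PQ]=\chi(PQ(PQ)^*)=\chi(PQP)$ and the identity $\|P^\perp Q\|^2=1-\min(\sigma(PQP)\backslash\{0\})$, you extract a spectral gap at $0$ that is uniform on a neighbourhood of any point of the domain, replace $\chi$ by a fixed continuous ramp function $f$ agreeing with it on all the relevant spectra, and then conclude from joint continuity of $(P,Q)\mapsto PQP$ and norm continuity of $S\mapsto f(S)$ for fixed continuous $f$. The paper instead argues globally and quantitatively: writing $[PR]^\perp[PQ]=[PR]^\perp PQ(PQ)^{-1}=[PR]^\perp P(Q-R)(PQ)^{-1}$ and using the norm formula for quasi-inverses, it obtains the explicit Lipschitz-type bound $\|[PQ]-[PR]\|\leq\|Q-R\|/\sqrt{1-\max(\|P^\perp Q\|,\|P^\perp R\|)^2}$, together with the analogous estimate in the first coordinate. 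Your argument is softer and arguably more elementary, needing only standard continuity of the functional calculus, but it produces no modulus of continuity; the paper's estimates are of the same type as those it reuses elsewhere (for instance in \autoref{veecont}, in the computation leading to (\ref{split}), and in the ``as small as we like'' bookkeeping in the proofs of \autoref{sigmapq} and \autoref{ex}), so the quantitative version is what the later sections actually lean on. One small point of care in your write-up: the identity $\|P^\perp Q\|^2=1-\min(\sigma(PQP)\backslash\{0\})$ presupposes $\sigma(PQP)\neq\{0\}$, but within the domain $PQ=0$ forces $Q=0$, so the uniform inclusion $\sigma(PQP)\subseteq\{0\}\cup[\epsilon,1]$ and the identity $[PQ]=f(PQP)$ survive this degenerate case and the argument stands.
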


\begin{proof}
Taking $P,Q,R\in\mathcal{P}(\mathcal{B}(H))$ with $||P^\perp Q||,||P^\perp R||<1$, we have
\begin{eqnarray*}
[PR]^\perp[PQ] &=& [PR]^\perp PQ(PQ)^{-1}=[PR]^\perp P(Q-R)(PQ)^{-1}\quad\textrm{and hence}\\
||[PR]^\perp[PQ]|| &\leq& ||Q-R||||(PQ)^{-1}||=||Q-R||/\sqrt{1-||P^\perp Q||^2}.
\end{eqnarray*}
Likewise, $||[PQ]^\perp[PR]||\leq||Q-R||/\sqrt{1-||P^\perp R||^2}$ so \[||[PQ]-[PR]||\leq||Q-R||/\sqrt{1-\max(||P^\perp Q||,||P^\perp R||)^2}.\]  Similaraly, we see that, for $P,Q,R\in\mathcal{P}(\mathcal{B}(H))$ with $||P^\perp Q||,||R^\perp Q||<1$, \[||[PQ]-[RQ]||\leq||P-R||/\sqrt{1-\max(||P^\perp Q||,||R^\perp Q||)^2}.\]  Combine these inequalities to see that $[PQ]$ is continuous in both coordinates simultaneously.
\end{proof}

\begin{lem}\label{veecont}
The function $P\vee Q$ is continuous on $\{(P,Q)\in\mathcal{P}(\mathcal{B}(H))^2:||PQ||<1\}$.
\end{lem}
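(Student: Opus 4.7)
My approach is to reduce continuity of $P\vee Q$ to the already-established continuity of the support map, \autoref{[]cont}. The key identity I aim for is
\[
P\vee Q \;=\; P + [P^\perp Q],
\]
valid throughout the domain $\{(P,Q)\in\mathcal{P}(\mathcal{B}(H))^2:\|PQ\|<1\}$.

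To establish the identity I would first invoke the observation made in the paragraph following (\ref{idPveeQ}) that $\|PQ\|<1$ forces $P^\perp Q$ (and $Q^\perp P$) to be well-supported, so $[P^\perp Q]$ is defined. Since $\mathcal{R}(P^\perp Q)\subseteq\mathcal{R}(P^\perp)$ we have $[P^\perp Q]\leq P^\perp$, hence $P[P^\perp Q]=0$ and $P+[P^\perp Q]$ is the sum of two orthogonal projections, hence a projection. Its range is the (closed) orthogonal sum $\mathcal{R}(P)\oplus\mathcal{R}(P^\perp Q)=\mathcal{R}(P)+P^\perp\mathcal{R}(Q)$, which coincides with $\mathcal{R}(P)+\mathcal{R}(Q)$ via the trivial decomposition $q=Pq+P^\perp q$ for $q\in\mathcal{R}(Q)$; by \autoref{PveeQ} this common subspace is $\mathcal{R}(P\vee Q)$, so the two projections agree.

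With the identity in hand I then apply \autoref{[]cont} with $(P,Q)$ replaced by $(P^\perp,Q)$: its hypothesis $\|(P^\perp)^\perp Q\|<1$ reads exactly as $\|PQ\|<1$, and together with the obvious norm-continuity of $P\mapsto P^\perp$ this yields joint continuity of $(P,Q)\mapsto[P^\perp Q]$ on our domain. Adding the trivially continuous map $(P,Q)\mapsto P$ then gives continuity of $P\vee Q$.

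I do not anticipate a serious obstacle. The only nontrivial ingredient is the identity in the first display, which reduces to verifying that $P+[P^\perp Q]$ and $P\vee Q$ have the same range; this is essentially bookkeeping once the well-supportedness of $P^\perp Q$ is granted, and the continuity conclusion then rides for free on \autoref{[]cont}.
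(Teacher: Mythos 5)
Your argument is correct, but it goes by a genuinely different decomposition than the paper's. The paper starts from (\ref{idPveeQ}), $P\vee Q=(P^\perp Q)^{-1}+(Q^\perp P)^{-1}$, notes that $(P\vee R)^\perp$ annihilates $(Q^\perp P)^{-1}$ so that $(P\vee R)^\perp(P\vee Q)=(P\vee R)^\perp Q(P^\perp Q)^{-1}$, and then estimates directly using (\ref{||id||}), obtaining $||(P\vee Q)-(P\vee R)||\leq||Q-R||/\sqrt{1-\max(||PQ||,||PR||)^2}$ and, by the symmetry of $\vee$, the same Lipschitz-type bound in the first coordinate. You instead establish the orthogonal decomposition $P\vee Q=P+[P^\perp Q]$ --- which is indeed valid: $||PQ||<1$ makes $P^\perp Q$ well-supported, $[P^\perp Q]\leq P^\perp$, and the range of $P+[P^\perp Q]$ is $\mathcal{R}(P)+\mathcal{R}(Q)=\mathcal{R}(P\vee Q)$ by \autoref{PveeQ} --- and then let \autoref{[]cont}, applied to the pair $(P^\perp,Q)$ (whose hypothesis $||(P^\perp)^\perp Q||<1$ is exactly $||PQ||<1$), carry the analytic work. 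The two routes are quantitatively close: your reduction reproduces the paper's bound in the $Q$-coordinate exactly, while in the $P$-coordinate it yields the marginally weaker $||P-R||\bigl(1+1/\sqrt{1-\max(||PQ||,||RQ||)^2}\bigr)$, which the companion identity $P\vee Q=Q+[Q^\perp P]$ would repair. The paper's version keeps the estimates explicit and symmetric in the two variables; yours is shorter, recycles \autoref{[]cont} instead of redoing an estimate of the same kind, and exhibits the pleasant identity $P\vee Q=P+[P^\perp Q]$ as a byproduct. One pedantic note: the well-supportedness of $P^\perp Q$ and $Q^\perp P$ under $||PQ||<1$ is recorded in the paragraph containing (\ref{idPveeQ}) rather than in the one following it, but the fact is in the paper and your use of it is fine.
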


\begin{proof}
Take $P,Q,R\in\mathcal{P}(\mathcal{B}(H))$ with $||PQ||,||PR||<1$ and note that, by (\ref{idPveeQ}), \[(P\vee R)^\perp(P\vee Q)=(P\vee R)^\perp(Q^\perp P)^{-1}+(P\vee R)^\perp(P^\perp Q)^{-1}=(P\vee R)^\perp Q(P^\perp Q)^{-1}.\]  Thus $||(P\vee R)^\perp(P\vee Q)||\leq||(P\vee R)^\perp Q||||(P^\perp Q)^{-1}||\leq||R^\perp Q||/\sqrt{1-||PQ||^2}$.  Likewise, we have $||(P\vee Q)^\perp(P\vee R)||\leq||Q^\perp R||/\sqrt{1-||PR||^2}$ so \[||(P\vee Q)-(P\vee R)||\leq||Q-R||/\sqrt{1-\max(||PQ||,||PR||)^2}.\]  The function $P\vee Q$ is symmetric so the same equalities hold for the other coordinate and combining these shows that $P\vee Q$ is continuous in both coordinates.
\end{proof}

Finally, a few calculations.  Take a C*-algebra $A$ and $P,Q,R\in\mathcal{P}(A)$ with $R<P$.  For $\lambda\in[0,1]$,
\begin{equation}
||PQP-\lambda P||=\max(||PQ||^2-\lambda,\lambda-1+||PQ^\perp||^2).
\end{equation}
Also, $||RQR-\lambda R||\leq||PQP-\lambda P||$ and $||(P-R)Q(P-R)-\lambda(P-R)||\leq||PQP-\lambda P||$ so
$||\lambda R-RQR+\lambda(P-R)-(P-R)Q(P-R)||\leq||PQP-\lambda P||$.  Also 
\begin{eqnarray*}
||(P-R)QR|| &=& ||RQ(P-R)||=||RQ(P-R)+(P-R)QR||\quad\textrm{and}\\
RQ(P-R)+(P-R)QR &=& PQP-RQR-(P-R)Q(P-R)\\
&=& PQP-\lambda P+\lambda R-RQR+\lambda(P-R)-(P-R)Q(P-R),\textrm{ so}\\
||(P-R)QR|| &\leq& 2||PQP-\lambda P||.
\end{eqnarray*}
The optimal value of $\lambda$ is $(||PQ||^2+1-||PQ^\perp||^2)/2$, which gives
\begin{equation}\label{||(P-R)QR||}
||(P-R)QR||\leq||PQ||^2+||PQ^\perp||^2-1,\quad\textrm{and hence, if }||PQ^\perp||<1,
\end{equation}
\[||(P-R)[QR]||\leq||(P-R)QR||/\sqrt{1-||Q^\perp R||^2}\leq(||PQ||^2+||PQ^\perp||^2-1)/\sqrt{1-||PQ^\perp||^2}.\]  As $||[QR]R||=||QR||\leq||QP||$, if $||PQ||<1$ too then
\begin{eqnarray}
||(P-R)(R\vee[QR])|| &=& ||(P-R)[QR](R^\perp[QR])^{-1}||\nonumber\\
&\leq& (||PQ||^2+||PQ^\perp||^2-1)/\sqrt{(1-||PQ^\perp||^2)(1-||PQ||^2)}.\label{split}
\end{eqnarray}
In particular, if $||PQ||^2+||PQ^\perp||^2=1$, i.e. if $PQP=\lambda P$ for some $\lambda$, then $(P-R)(R\vee[QR])=0$.

\subsection{Partial Isometries}

\begin{prp}\label{UPQ}
Assume $U$ is a partial isometry, $P=U^*U$, $Q=UU^*$ and $U^*U^2$ is self-adjoint.  The following are equivalent.
\begin{enumerate}
\item\label{P-Q} $||P-Q||<1$.
\item\label{U-U*} $||U-U^*||<1$.
\item\label{U^2} $U^2$ is well-supported and $[U^2]=Q$
\end{enumerate}
\end{prp}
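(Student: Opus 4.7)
The plan is to first unpack the self-adjointness hypothesis and extract a single algebraic identity that drives everything. Since $U^*U^2=(U^*U)U=PU$, the hypothesis amounts to $PU=(PU)^*=U^*P$. Combining this with the partial isometry identities $UP=U$ and $QU=U$ gives the crucial identity
\[
U^2 \;=\; UPU \;=\; U\cdot U^*P \;=\; QP,
\]
and taking adjoints, $U^{*2}=PQ$. Every subsequent step will flow from this.

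From this identity the equivalence (i)$\Leftrightarrow$(ii) follows from a one-line computation:
\[
(U-U^*)^*(U-U^*) \;=\; P+Q-U^2-U^{*2} \;=\; P+Q-QP-PQ \;=\; (P-Q)^2,
\]
so $\|U-U^*\|=\|P-Q\|$. For (i)$\Leftrightarrow$(iii), I would first translate (iii) into a norm condition: the preliminary equivalence ``$\|PQ^\perp\|<1$ iff $PQ$ is well-supported with $[PQ]=P$'', applied with the roles of $P$ and $Q$ swapped and combined with $U^2=QP$, shows that (iii) is precisely $\|P^\perp Q\|<1$. The direction (i)$\Rightarrow$(iii) is then immediate from $\|P-Q\|=\max(\|PQ^\perp\|,\|P^\perp Q\|)$.

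The substantive direction is (iii)$\Rightarrow$(i), where I still need $\|PQ^\perp\|<1$. Well-supportedness of $QP$ transfers automatically to $PQ=(QP)^*$, so the only obstruction is a nonzero $v\in\mathcal{R}(P)\cap\mathcal{R}(Q^\perp)$ in a Hilbert space representation (any such $v$ would force $[PQ]<P$ and hence $\|PQ^\perp\|=1$). Here the hypothesis enters decisively: from $Qv=0$ one gets $\|U^*v\|^2=\langle Qv,v\rangle=0$, so $U^*v=0$, and then $PUv=U^*Pv=U^*v=0$, whence $Uv\in\mathcal{R}(P^\perp)$. Since also $Uv\in\mathcal{R}(Q)$ and $\|Uv\|=\|v\|\neq 0$, one obtains $0\neq Uv\in\mathcal{R}(P^\perp)\cap\mathcal{R}(Q)$, contradicting $[U^2]=[QP]=Q$ (which forces $\mathcal{R}(P^\perp)\cap\mathcal{R}(Q)=\{0\}$). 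The main obstacle I anticipate is exactly this asymmetry: (iii) only directly constrains one of the two ``intersection defects'' jointly governing $\|P-Q\|<1$, and it is the self-adjointness hypothesis that bridges them by transporting $\mathcal{R}(P)\cap\mathcal{R}(Q^\perp)$ isometrically into $\mathcal{R}(P^\perp)\cap\mathcal{R}(Q)$ via $U$.
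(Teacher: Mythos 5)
Your proof is correct, and its skeleton largely matches the paper's: both hinge on the identity $U^2=UU^*U^2=UU^{*2}U=QP$ (equivalently your $PU=U^*P$), both get (i)$\Leftrightarrow$(ii) from the computation $(U-U^*)^*(U-U^*)=P+Q-QP-PQ=(P-Q)^2$, and both read (iii) as ``$QP$ well-supported with $[QP]=Q$'', i.e.\ $\|P^\perp Q\|<1$, which gives (i)$\Rightarrow$(iii) at once. Where you genuinely diverge is the direction (iii)$\Rightarrow$(i): the paper stays inside the support-projection calculus, computing $U^{*2}=U^*U^2U^*=PQ$ and then $[U^{*2}]=[U^{*2}U]=[U^*U^2]=[U^*Q]=P$, so that both $[PQ]=P$ and $[QP]=Q$ hold and $\|P-Q\|=\max(\|PQ^\perp\|,\|P^\perp Q\|)<1$ follows; you instead pass to a Hilbert space representation and argue spatially that $U$ carries $\mathcal{R}(P)\cap\mathcal{N}(Q)$ isometrically into $\mathcal{R}(Q)\cap\mathcal{N}(P)$, which is trivial by (iii). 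The paper's route is representation-free and fits the paper's quasi-inverse/support machinery; yours is more geometric and makes transparent exactly what the hypothesis $U^*U^2=(U^*U^2)^*$ is doing (transporting one ``defect'' space onto the other). The only thing you should make explicit is the step you label ``the only obstruction'': given that $PQ$ is well-supported, you need that $\mathcal{R}(P)\cap\mathcal{N}(Q)=\{0\}$ implies $[PQ]=P$ (and hence $\|PQ^\perp\|<1$); this follows from the one-line observation that any $v\in\mathcal{R}(P)$ orthogonal to $\overline{\mathcal{R}(PQ)}$ satisfies $\langle Qv,u\rangle=\langle v,PQu\rangle=0$ for all $u$, so $Qv=0$ --- you only justified the converse implication, which is not the one your argument uses.
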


\begin{proof}
First note that $U^2=UU^*U^2=UU^{*2}U=QP$, so $||P^\perp Q||\leq||P-Q||<1$ implies $U^2$ is well-supported and $[U^2]=Q$, i.e. \ref{P-Q}$\Rightarrow$\ref{U^2}.  If $U^2$ is well-supported then so is $U^{*2}$, and if $[U^2]=Q$ then $[U^{*2}]=[U^{*2}U]=[U^*U^2]=[U^*Q]=P$.  As $U^{*2}=U^{*2}UU^*=U^*U^2U^*=PQ$, \ref{U^2} implies that $QP$ and $PQ$ are well-supported with $[PQ]=P$ and $[QP]=Q$, i.e. $||P-Q||<1$.  Also \[(U-U^*)(U-U^*)^*=(U-U^*)(U^*-U)=Q-QP-PQ+P=(P-Q)^2,\] and hence $||U-U^*||=||P-Q||$, which proves \ref{P-Q}$\Leftrightarrow$\ref{U-U*}.
\end{proof}

It is well known that any pair of projections $P$ and $Q$ in a C*-algebra $A$ satisfying $||P-Q||<1$ are Murray-von Neumann equivalent, as witnessed by the partial isometry $U_{QP}$ coming from the polar decomposition of $QP$.  More precisely, this yields a one-to-one correspondence between projection pairs $P,Q\in A$ such that $||P-Q||<1$, and partial isometries $U\in A$ such that $U^*U^2$ is positive and $||U-U^*||<1$.\footnote{or, equivalently for such partial isometries, $||UU^*-U||<\sqrt{2}$ or even $||1-U||<\sqrt{2}$.}

\begin{prp}\label{U}
Let $A$ be a C*-algebra.  For $P,Q\in\mathcal{P}(A)$ with $||P-Q||<1$, $U_{QP}$ is the unique partial isometry $U$ such that $U^*U=P$, $UU^*=Q$ and $U^*U^2\in A_+$.
\end{prp}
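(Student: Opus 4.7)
The plan is to verify existence directly and then handle uniqueness with a polar-decomposition-style correction argument. For existence, I take $U=U_{QP}=QP(PQP)^{-1/2}$. The identities $U^*U=P$ and $UU^*=Q$ follow from the polar decomposition formulas $UU^*=[T]$ and $U^*U=[T^*]$ applied to $T=QP$, together with $[QP]=Q$ and $[PQ]=P$, both of which are consequences of $||P-Q||<1$. For the positivity condition, using $U^*U=P$ gives $U^*U^2=PU_{QP}=PQP\cdot(PQP)^{-1/2}=(PQP)^{1/2}\in A_+$, where the last equality is just functional calculus inside the C*-subalgebra generated by $PQP$.

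For uniqueness, suppose $U$ is any partial isometry meeting the three conditions. Since $U$ and $U_{QP}$ share the same initial and final projections, they ought to differ by a unitary in the corner $PAP$. Concretely, I would set $W=U_{QP}^*U$ and verify, using $U^*U=U_{QP}^*U_{QP}=P$ and $UU^*=U_{QP}U_{QP}^*=Q$, that $W=PWP$ and $W^*W=U^*QU=U^*UU^*U=P$, and similarly $WW^*=P$, so $W$ is a unitary in the unital corner $PAP$. Then $U_{QP}W=U_{QP}U_{QP}^*U=QU=U$.

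Substituting this representation into the positivity hypothesis gives $U^*U^2=PU=PU_{QP}W=(PQP)^{1/2}W\in A_+$, and it remains to deduce $W=P$. Since $||P-Q||<1$ forces $[PQ]=P$ and hence $[PQP]=P$, the element $(PQP)^{1/2}$ is genuinely invertible (not merely well-supported) in $PAP$, with inverse $(PQP)^{-1/2}$. Conjugating $(PQP)^{1/2}W\ge 0$ by the self-adjoint $(PQP)^{-1/4}$ yields $(PQP)^{1/4}W(PQP)^{-1/4}$, which is simultaneously positive (as conjugation of a positive element by a self-adjoint one) and similar in $PAP$ to the unitary $W$; its spectrum in $PAP$ is therefore contained in the intersection of $\{z\in\mathbb{C}:|z|=1\}$ with $[0,\infty)$, namely $\{1\}$. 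A self-adjoint element with spectrum $\{1\}$ equals the unit $P$ of $PAP$, and hence $(PQP)^{1/4}W=(PQP)^{1/4}$, giving $W=P$ and $U=U_{QP}P=U_{QP}$.

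The main obstacle is precisely this final spectral trick: one needs a clean way to show that a unitary in $PAP$ whose product with a fixed positive invertible element of $PAP$ lies in $A_+$ must be the unit. The conjugation by $(PQP)^{-1/4}$ reduces this to the familiar fact that a positive element whose spectrum lies on the unit circle equals the identity, but it does require using the hypothesis $||P-Q||<1$ precisely at this point to ensure that $(PQP)^{1/2}$ is genuinely invertible in the corner rather than only well-supported in the ambient $A$.
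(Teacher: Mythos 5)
Your proof is correct, but your uniqueness argument takes a genuinely different route from the paper's. The paper stays inside its quasi-inverse calculus: from $U^*U^2\in A_+$ it deduces via \autoref{UPQ} that $[U^{*2}]=P$ and hence $[U^*P]=P$, notes $PQP=(U^*P)^2$ with $U^*P=U^*U^2\geq0$, and then computes directly $U_{QP}=QP(PQP)^{-1/2}=UU^*P(U^*P)^{-1}=U[U^*P]=UP=U$. You instead factor $U=U_{QP}W$ with $W=U_{QP}^*U$ a unitary in the corner $PAP$, rewrite the hypothesis as $(PQP)^{1/2}W\geq0$ with $(PQP)^{1/2}$ invertible in $PAP$, and eliminate $W$ by the similarity-plus-positivity spectral trick: $(PQP)^{1/4}W(PQP)^{-1/4}$ is positive and has the same spectrum as the unitary $W$, hence spectrum $\{1\}$, hence equals $P$, giving $W=P$. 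Both arguments rest on the same underlying point, namely that positivity of $U^*U^2=PU$ identifies $PU$ as the positive square root of $PQP$, i.e.\ uniqueness of the polar decomposition of $QP$; but the mechanics differ. The paper's computation is shorter given the machinery of \S\ref{pre} (support projections, quasi-inverses, \autoref{UPQ}), while yours bypasses \autoref{UPQ} and the quasi-inverse identities for non-self-adjoint elements altogether, using only standard facts (invariance of spectrum under similarity, a positive element with unit-circle spectrum is the unit), and it isolates precisely where $||P-Q||<1$ is used, namely to make $PQP$ genuinely invertible in the corner rather than merely well-supported in $A$. Your existence verification $U^*U^2=(PQP)^{1/2}$ agrees with the paper's $U^*U^2=|QP|$.
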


\begin{proof}
If $U=U_{QP}$ then $UU^*=[QP]=Q$, $U^*U=[PQ]=P$ and $U^*U^2=PQU=|QP|\in A_+$.  On the other hand, say $U$ is another partial isometry with $U^*U=P$, $UU^*=Q$ and $U^*U^2\in A_+$, which also means $U^2U^*=U(U^*U^2)U^*\in A_+$.  By \autoref{UPQ}, $P=[U^{*2}]=[U^{*2}U]=[U^*P]$.  Also, $PQP=U^*UUU^*U^*U=(U^*U^*U)^2=(U^*P)^2$ and hence, \[U_{QP}=QP(PQP)^{-1/2}=UU^*P(U^*P)^{-1}=U[U^*P]=UP=U.\]
\end{proof}

The partial isometries above form a subclass of the collection of split partial isometries considered in \cite{AndruchowCorachMbekhta2011}.  However, $U^*U^2\in A_+$ alone does not imply that $U$ is a split partial isometry.  In fact, at the other extreme we can have (non-zero) partial isometries $U$ with $U^2=0$.  However, if $U^*U^2$ is positive and well-supported, then we can always split up $U$ into a partial isometry of this form plus a partial isometry of the form in \autoref{U}.

In fact, say $U$ is a partial isometry and $P\leq U^*U$ is a projection commuting with $U^*U^2$.  Then we may let $Q=UPU^*$, $P'=U^*U-P$ and $Q'=UU^*-Q=UP'U^*$.  As \[U^*UQ=U^*U^2PU^*=PU^*U^2PU^*=PUPU^*=PQ,\] so $P'Q=(U^*U-P)Q=0$.  Likewise, $PUU^*=PU^*U^2U^*=PU^*U^2PU^*=PQ$ and hence $PQ'=P(UU^*-Q)=0$.  Thus if $U^*U^2$ is self-adjoint and well-supported, letting $P_+=[(U^*U^2)_+]$, $P_-=[(U^*U^2)_-]$, $P_0=U^*U-P_+-P_-$, $U_+=UP_+$, $U_-=-UP_-$ and $U_0=UP_0$, we see that $U=U_+-U_-+U_0$ and, when represented on a Hilbert space, $\mathcal{R}(U_k)+\mathcal{R}(U_k^*)$ are mutually othogonal subspaces for $k=+,-,0$.

\section{The Projection Calculus}\label{PFC}

The general situation we want to consider is as follows.  We are given two projections $Q$ and $R$ in a C*-algebra $A$, together with a continuous function $f$ from $\sigma(QR)$ to $[0,1]$ with $f(0)=0$ and $f(1)=1$ (if $1\in\sigma(QR)$).  We want to obtain another projection $P=P_{Q,R,f}$ in $A$ onto a subspace obtained, roughly speaking, by moving the eigenvectors of $RQR$ in the range of $R$ towards or away from $Q$ so that $\lambda$-eigenvectors of $RQR$ become $f(\lambda)$-eigenvectors of $PQP$.  Equivalently, we want $\lambda$-eigenvectors of $QRQ$ to be $f(\lambda)$-eigenvectors of $QPQ$ which, stated more precisely in the language of the continuous functional calculus, means $QPQ=f(QRQ)$.  To obtain this $P$ we apply the continuous functional calculus to $RQR$ in the following way.

To begin with, we will further assume that, if $0$ is a limit point of $\sigma(QR)$, $f(s)/s$ has a limit as $s$ approaches $0$ in $\sigma(QR)\backslash\{0\}$ and, if $1$ is a limit point of $\sigma(QR)$, $(1-f(s))/(1-s)$ has a limit as $s$ approaches $1$ in $\sigma(QR)\backslash\{1\}$, i.e. $f$ has a (finite) derivative at $0$ and $1$.  This ensures that there are continuous functions $x_f$ and $y_f$ on $\sigma(QR)$ with $x_f(s)=\sqrt{f(s)/s}$ for $s\neq0$ and $y_f(s)=\sqrt{(1-f(s))/(1-s)}$ for $s\neq1$.  We can then define $U=U_{Q,R,f}$ by
\begin{equation}
U=QRx_f(RQR)+Q^\perp Ry_f(RQR)
\end{equation}
Note that $R(RQR)=RQR=(RQR)R$ so $R$ commutes with $y_f(RQR)$ and, as $f(0)=0$, $Rf(RQR)=f(RQR)$ so
\begin{eqnarray*}
U^*U &=& x_f(RQR)RQRx_f(RQR)+y_f(RQR)RQ^\perp Ry_f(RQR)\\
&=& f(RQR)+Ry_f(RQR)(1-RQR)y_f(RQR)\\
&=& f(RQR)+R(1-f(RQR))\\
&=& R,
\end{eqnarray*}
i.e. $U$ is a partial isometry with initial projection $R$.  We define $P=P_{Q,R,f}$ to be the final projection of $U$, i.e. $P=UU^*$.  Applying \autoref{g(TT^*)T} with $T=QR$, we get
\begin{equation}\label{QPQ}
QPQ=QRx_f(RQR)^2RQ=x_f(QRQ)^2QRQ=f(QRQ),
\end{equation}
as required.

Even with very simple functions $f$, the projection calculus is surprisingly powerful.  In fact, for our applications in the following sections $f$ will always be piecewise linear and, in particular, have a derivative at $0$ and $1$.  However, for the sake of interest and completeness, we now show that this restriction is not essential.  Specifically, note first that
\begin{eqnarray*}
U_{Q,R,f}-U_{Q,R,g} &=& QR(x_f-x_g)(RQR)+Q^\perp R(y_f-y_g)(RQR)\quad\textrm{so}\\
(U_{Q,R,f}-U_{Q,R,g})(U_{Q,R,f}-U_{Q,R,g})^* &=& (x_f-x_g)(RQR)RQR(x_f-x_g)(RQR)\\
&& +R(y_f-y_g)(RQR)(1-RQR)(y_f-y_g)(RQR)\\
&=& a_{f,g}(RQR)\quad\textrm{and hence}\\
||U_{Q,R,f}-U_{Q,R,g}|| &=& \sup_{s\in\sigma(QR)}\sqrt{a_{f,g}(s)},\quad\textrm{where}\\
a_{f,g}(s) &=& (\sqrt{f(s)}-\sqrt{g(s)})^2+(\sqrt{1-f(s)}-\sqrt{1-g(s)})^2\\
&=& 2(1-\sqrt{f(s)g(s)}-\sqrt{(1-f(s))(1-g(s))}).
\end{eqnarray*}
Thus, even if $f$ does not have a derivative at $0$ and $1$, we can still define a sequence $(f_n)$ of functions which does, such that $f_n$ approaches $f$ uniformly.  Even though the functions $x_{f_n}$ might increase to infinity at some points, the working above shows that the partial isometries $U_{Q,R,f_n}$ will still converge, necessarily to another partial isometry $U_{Q,R,f}$ whose final projection $P=P_{Q,R,f}$ satisfies $QPQ=f(QRQ)$.

We can also obtain a nice formula expressing the norm difference of the resulting projections $P_f=P_{Q,R,f}$ and $P_g=P_{Q,R,g}$.  First note that
\begin{eqnarray*}
U^*_fU_g &=& (x_f(RQR)RQ+y_f(RQR)RQ^\perp)(QRx_g(RQR)+Q^\perp Ry_g(RQR))\\
&=& x_f(RQR)RQRx_g(RQR)+y_f(RQR)RQ^\perp Ry_g(RQR)\\
&=& b_{f,g}(RQR)R,\quad\textrm{where}\\
b_{f,g}(s) &=& \sqrt{f(s)g(s)}+\sqrt{(1-f(s))(1-g(s))}.
\end{eqnarray*}
Also note that, for any $s,t\in[0,1]$,
\begin{eqnarray*}
1-(\sqrt{st}+\sqrt{(1-s)(1-t)})^2 &=& 1-(st+2\sqrt{(st)(1-s)(1-t)}+(1-s)(1-t))\\
&=& s+t-2(\sqrt{(st(1-s)(1-t))}+st)\\
&=& (1-s)t+s(1-t)-2\sqrt{st(1-s)(1-t))}\\
&=& (\sqrt{(1-s)t}-\sqrt{s(1-t)})^2.
\end{eqnarray*}
Thus
\[R-U^*_gU_fU^*_fU_g=R(1-b_{f,g}^2)(RQR)=Rc_{f,g}^2(RQR)=c_{f,g}^2(RQR),\]
where $c_{f,g}(s)=\sqrt{(1-f(s))g(s)}-\sqrt{f(s)(1-g(s))}$ (note that $c_{f,g}(0)=0$).  So
\begin{eqnarray*}
||P_f^\perp P_g||^2 &=& ||P_gP_f^\perp P_g||=||U_gU^*_gP_f^\perp U_gU^*_g||=||U^*_gP_f^\perp U_g||=||R-U^*_gU_fU^*_fU_g||\\
&=& \max_{s\in\sigma(QR)}c^2_{f,g}(s).
\end{eqnarray*}
However, $U^*_fU_g=b_{f,g}(RQR)R$ is self-adjoint so $R-U^*_gU_fU^*_fU_g=R-U^*_fU_gU^*_gU_f$ and hence
\begin{eqnarray*}
||P_f-P_g|| &=& ||P_f^\perp P_g||=||P_g^\perp P_f||=\max_{s\in\sigma(QR)}|c_{f,g}(s)|\\
&=& \max_{s\in\sigma(QR)}|\sqrt{(1-f(s))g(s)}-\sqrt{f(s)(1-g(s))}|.
\end{eqnarray*}
Also note that when $g=\mathrm{id}$ we have $U_g=P_g=R$ so, with $P=P_f$, the above formula becomes
\begin{equation}\label{||P-R||}
||P-R||=\max_{s\in\sigma(QR)}|\sqrt{(1-f(s))s}-\sqrt{f(s)(1-s)}|.
\end{equation}

Lastly, we point out that the equation $QPQ=f(QRQ)$ will usually not have a unique solution, even for projections $P$ in the C*-subalgebra generated by the given $Q$ and $R$.  However, we think that the $P_f$ we have constructed is in some sense the most natural choice.  For example, we would conjecture that, given any $Q_f,Q_g\in\mathcal{P}(A)$ with $QQ_fQ=f(QRQ)$ and $QQ_gQ=g(QRQ)$, we necessarily have $||Q_f-Q_g||\geq||P_f-P_g||$ and probably also $P_fQ_gP_f\leq P_fP_gP_f$.

\section{Projection Homotopies}\label{PH}

The previous section concludes the development of the projection calculus, which we will see applied to prove a number of new results in the following sections.  First, however, we give a couple of quick examples to see how it can be used to simplify proofs of classical results, like the following.

\begin{thm}\label{QRhomo}
Any projections $Q$ and $R$ in a C*-algebra $A$ with $||Q-R||<1$ are homotopic.
\end{thm}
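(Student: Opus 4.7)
The plan is to produce the homotopy directly via the projection calculus of \S\ref{PFC}, interpolating through a one-parameter family of functions $f_t$. Since $||Q-R||<1$ implies $||QR^\perp||,||Q^\perp R||<1$, we have $\sigma(QR)\setminus\{0\}\subseteq(1-||QR^\perp||^2,1]$, so $0$ (if present in $\sigma(QR)$) is isolated. For each $t\in[0,1]$ I would set $f_t(0)=0$ and $f_t(s)=t+(1-t)s$ for $s\in\sigma(QR)\setminus\{0\}$. Isolation of $0$ makes each $f_t$ continuous on $\sigma(QR)$, and $f_t$ is affine near $1$, so the derivative hypotheses of \S\ref{PFC} are automatic at both endpoints; evidently $f_t$ takes values in $[0,1]$ with $f_t(0)=0$ and $f_t(1)=1$ (if $1\in\sigma(QR)$). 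Let $P_t=P_{Q,R,f_t}$ and $U_t=U_{Q,R,f_t}$.

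The endpoint at $t=0$ is immediate from the remark after (\ref{||P-R||}): since $f_0=\mathrm{id}$, we get $P_0=R$. For $t=1$, note $f_1\equiv1$ on $\sigma(QR)\setminus\{0\}$, so $QP_1Q=f_1(QRQ)=[QRQ]$. The hypothesis $||Q^\perp R||<1$ gives $\langle QRQv,v\rangle=||Rv||^2\geq(1-||Q^\perp R||^2)||v||^2$ for every $v\in\mathcal{R}(Q)$, so $QRQ$ is strictly positive on $\mathcal{R}(Q)$, whence $[QRQ]=Q$ and $Q\leq P_1$. Conversely, $||Q^\perp R||<1$ forces $\mathcal{N}(RQR)=\mathcal{R}(R^\perp)$, so $y_{f_1}(RQR)=R^\perp$ and thus $U_1=QRx_{f_1}(RQR)$; this gives $U_1^*Q^\perp=x_{f_1}(RQR)RQQ^\perp=0$, so $P_1Q^\perp=U_1U_1^*Q^\perp=0$ and $P_1\leq Q$. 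Therefore $P_1=Q$.

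Continuity of $t\mapsto P_t$ then follows from the explicit norm formula derived in \S\ref{PFC},
\[||P_{f_t}-P_{f_s}||=\max_{u\in\sigma(QR)}|\sqrt{(1-f_t(u))f_s(u)}-\sqrt{f_t(u)(1-f_s(u))}|,\]
combined with the trivial estimate $||f_t-f_s||_\infty\leq|t-s|$. The only delicate step is the endpoint identification $P_1=Q$, where one must exploit the isolation of $0$ in $\sigma(QR)$ (a consequence of $||Q-R||<1$) to legitimately use the otherwise degenerate $f_1$ that jumps from $0$ at the origin to the constant value $1$ on the rest of $\sigma(QR)$.
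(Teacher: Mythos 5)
Your proposal is correct and is essentially the paper's own proof: it uses the same linear family $f_t(x)=(1-t)x+t$ on $\sigma(QR)\setminus\{0\}$ (with $f_t(0)=0$), interpolating from $\mathrm{id}$ to the characteristic function $\chi$ of $\sigma(QR)\setminus\{0\}$, and takes $P_t=P_{Q,R,f_t}$ as the homotopy. The extra verifications you supply (isolation of $0$ in $\sigma(QR)$, the endpoint identifications $P_0=R$ and $P_1=Q$, and continuity via the norm formula following (\ref{||P-R||})) are exactly the details the paper leaves implicit, and they are carried out correctly.
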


By now, this is a standard result, although proving it usually requires a little bit of effort (see \cite{Wegge-Olsen1993} Proposition 5.2.6, for example).  But with the projection calculus in our toolbox, it becomes almost trivial.  Simply take any homotopy from the identity $\mathrm{id}$ on $\sigma(QR)$ to the characteristic function $\chi$ of $\sigma(QR)\backslash\{0\}$, e.g. set $f_t(x)=(1-t)x+t$ for $x\in\sigma(QR)\backslash\{0\}$ and $f_t(0)=0$, for all $t\in[0,1]$.  Then note that $P_t=P_{Q,R,f_t}$ is the required homotopy of projections from $R$ to $Q$.  Furthermore, the projection calculus gives us more control over the kind of homotopy used so, if our application required the homotopy to avoid some finite collection of projections, for example, we could easily achieve this by simply adjusting the homotopy from $\mathrm{id}$ to $\chi$ accordingly.

In a similar vein, we have the following, which is \cite{Wegge-Olsen1993} Proposition 5.3.8.

\begin{thm}
If $Q$ and $R$ are Murray-von Neumann equivalent projections in a C*-algebra $A$ satisfying $||QR||<1$ then they are necessarily homotopic.
\end{thm}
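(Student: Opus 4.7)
The plan is to use the Murray--von Neumann partial isometry itself to interpolate between $R$ and $Q$, exploiting the polar decomposition machinery from \S\ref{pre}. Let $V \in A$ witness the equivalence, so $V^*V = R$ and $VV^* = Q$, and form the linear interpolation $W_t = (1-t)R + tV$ for $t \in [0,1]$. My candidate for the homotopy will be the left support projection
\[P_t := [W_t] = W_t(W_t^*W_t)^{-1}W_t^*.\]
The endpoints come out correctly for free: $W_0 = R$ gives $P_0 = [R] = R$, while $W_1 = V$ gives $P_1 = [V] = VV^* = Q$.

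The heart of the argument is to verify that $P_t$ is a genuine, norm-continuous path of projections, which will reduce to showing that $W_t^*W_t$ is well-supported with constant support $[W_t^*] = R$ and that its quasi-inverse depends norm-continuously on $t$. Using $VR = V$ and $RV^* = V^*$, the product expands as
\[W_t^*W_t = ((1-t)^2 + t^2)R + t(1-t)(RV + V^*R),\]
an element of $RAR$. Since $V = QV$, the crucial estimate $||RV|| = ||RQV|| \leq ||QR||$ is immediate, whence $||RV + V^*R|| \leq 2||QR||$. Combined with $2t(1-t) \leq 1/2$, this yields the uniform lower bound
\[W_t^*W_t \geq \big((1-t)^2 + t^2 - 2t(1-t)||QR||\big)R \geq \tfrac{1-||QR||}{2}R > 0\]
in $RAR$, so $[W_t^*] = R$ for all $t$ and $(W_t^*W_t)^{-1}$ depends norm-continuously on $t$.

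Assembling these pieces, $P_t = W_t(W_t^*W_t)^{-1}W_t^*$ is a norm-continuous homotopy of projections joining $R$ to $Q$, as required. The only genuinely delicate step is the estimate $||RV|| \leq ||QR||$ together with the resulting positivity of $W_t^*W_t$; this is precisely where the hypothesis $||QR|| < 1$ is used, ensuring that the interpolation stays away from the singular locus on which the quasi-inverse would cease to be continuous.
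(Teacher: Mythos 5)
Your proof is correct, but it takes a genuinely different route from the one in the paper. The paper follows Wegge-Olsen's two-stage strategy: first the orthogonal case $QR=0$ is handled by writing $S=U+U^*$ as a difference $R'-Q'$ of orthogonal projections and rotating, $S_t=R'-e^{t\pi i}Q'$, $t\mapsto S_tRS_t$; then the general case $||QR||<1$ is reduced to the orthogonal one via the projection calculus, taking $f_t(x)=(1-t)x$ and homotoping $R$ through $P_{Q,R,f_t}$ to a projection $P$ with $QPQ=0$. You instead interpolate linearly between $R$ and the implementing partial isometry $V$ and take left supports, $P_t=[W_t]=W_t(W_t^*W_t)^{-1}W_t^*$ with $W_t=(1-t)R+tV$. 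Your key computation checks out: $VR=V$ and $RV^*=V^*$ give $W_t^*W_t=((1-t)^2+t^2)R+t(1-t)(RV+V^*R)\in RAR$, and $||RV||=||RQV||\leq||QR||$ together with $2t(1-t)\leq\tfrac12$ yields $W_t^*W_t\geq\tfrac{1-||QR||}{2}R$ in the corner $RAR$, so each $W_t$ is well-supported with $[W_t^*]=R$, the quasi-inverses have a uniform spectral gap (hence vary norm-continuously in $t$, e.g.\ by applying one fixed continuous extension of $x\mapsto 1/x$ off $[\tfrac{1-||QR||}{2},4]$ via the functional calculus and \autoref{g(TT^*)T}), and the endpoints are $P_0=[R]=R$ and $P_1=[V]=VV^*=Q$. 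What each approach buys: yours is shorter and uniform, needing only the support-projection and quasi-inverse machinery of \S\ref{pre}, avoiding both the case split and the rotation trick (when $QR=0$ your path simply normalizes $W_tW_t^*$); the paper's route, by contrast, is designed to advertise the projection calculus and gives finer control along the homotopy, since $QP_{Q,R,f_t}Q=f_t(QRQ)$ is known explicitly and the homotopy of functions $f_t$ can be adjusted to impose extra constraints, as the paper emphasizes after \autoref{QRhomo}.
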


The proof in \cite{Wegge-Olsen1993} goes by first proving the case $QR=0$, using \cite{Wegge-Olsen1993} Proposition 4.2.7, and we give essentially the same argument here.  Specifically, let $U$ be the partial isometry with $U^*U=Q$ and $UU^*=R$ and note that $S=U+U^*$ is self-adjoint and $S^2$ is the projection $R+Q$.  Thus $S=R'-Q'$ for some $R',Q'\in\mathcal{P}(A)$ with $R'+Q'=R+Q$ and $R'Q'=0$.  Let $f$ move $1$ to $-1$ within the unit circle of $\mathbb{C}$, e.g. let $f(t)=e^{t\pi i}$ for $t\in[0,1]$.  Setting $S_t=R'-f(t)Q'$, we have $S_0RS_0=U^*RU=Q$, $S_1RS_1=(R+Q)R(R+Q)=R$ and $S_tRS_t\in\mathcal{P}(A)$, for all $t\in[0,1]$, and so $S_tRS_t$ is the required homotopy.

The rest of the proof in \cite{Wegge-Olsen1993} goes by reducing the general case to the $QR=0$ case with a couple of pages of spectral theoretical tricks, as the author calls them.  However, we can do the same reduction in just a couple of lines by using the projection calculus.  Simply take any homotopy from the identity to the zero function on $\sigma(QR)$, e.g. set $f_t(x)=(1-t)x$, for all $t\in[0,1]$.  Then $P_t=P_{Q,R,f_t}$ is a homotopy from $R$ to a projection $P$ satisfying $QPQ=f_1(QRQ)=0$, which is Murray-von Neumann equivalent to $R$, and hence to $Q$, by construction.

\section{Lifting}\label{PB}

Say we have a homomorphism $\pi$ from a C*-algebra $A$ onto another C*-algebra $B$.  In this section, we consider the problem of lifting an operator $t\in B$ to an operator $T\in A$ (i.e. satisfying $\pi(T)=t$) with the same properties.  For example, we might want to lift self-adjoint operators in $B$ to self-adjoint operators in $A$ (Loring \cite{Loring1997} would say rather that we are lifting the relation $T=T^*$), or likewise with projections, idempotents or partial isometries instead.  We might also require something extra, like that the norm or spectrum of the lifting remains the same as the original (the norm and spectrum of the lifting can not possibly be smaller but they can certainly be much larger).  In fact, we might actually want to lift many operators simultaneously and ensure that some relationships between them remain the same.  For example, say we want to show that the collection of projections in the Calkin algebra $\mathcal{C}(H)$ has no $(\omega,\omega)$-gaps.  The collection of projections in $\mathcal{B}(H)$ certainly has no $(\omega,\omega)$-gaps (the supremum of the bottom half or the infimum of the top half will always interpolate any pregap of projections in a von Neumann algebra), so we will be done so long as we can lift $(\omega,\omega)$-gaps in $\mathcal{P}(\mathcal{C}(H))$ to $(\omega,\omega)$-gaps in $\mathcal{P}(\mathcal{B}(H))$.  This can be done recursively, so long as we can lift any $p,q\in\mathcal{P}(\mathcal{C}(H))$ with $p\leq q$ to $P,Q\in\mathcal{P}(\mathcal{B}(H))$ with $P\leq Q$ (actually what is required here is slightly stronger, namely what is called a two-step lifting in \cite{Loring1997} Definition 8.1.6 \textendash\, see below or \cite{Bice2009} \S3 for more details).

Going back to the first lifting problem above, we see that it is always possible to lift a self-adjoint operator $s\in B$ to a self-adjoint operator $S\in A$ \textendash\, simply take any $T\in A$ with $\pi(T)=s$ and let $S=\frac{1}{2}(T+T^*)$ (see \cite{Loring1997} for a whole host of other relations that can be lifted in general C*-algebras).  We can even ensure that the norm remains the same, i.e. $||S||=||s||$, using the usual continuous functional calculus.  But with projections, we are already in trouble.  For we could have $A=C([0,1])$, $B=\mathbb{C}\oplus\mathbb{C}$ and $\pi(f)=(f(x),f(y))$, for all $f$ and some distinct $x,y\in[0,1]$.  Then $A$ has only two projections, the unit and zero, neither of which map to the projection $(0,1)\in B$.  Similarly, the other problems are not solvable in general, and so we have to restrict the class of C*-algebras under consideration if we want to obtain general solutions.  It turns out that a nice general class in which many of these problems have quite general solutions is the class of C*-algebras of real rank zero.  These have a number of different characterizations (see \cite{Blackadar2006} V 3.2.9, for example), although the most important for our work involves the existence of spectral projection approximations (see \cite{Brown1991} and \cite{BrownPedersen1991}), as given below.  As will be seen in what follows, this is an extremely useful (and, up till now, widely underutilized) characterization of real rank zero C*-algebras.
\begin{dfn}\label{rr0}
A C*-algebra $A$ has real rank zero if, for all $s>t>0$ and self-adjoint $S\in A$, there exists $P\in\mathcal{P}(A)$ such that $E^\perp_S(s)\leq P\leq E^\perp_S(t)$.
\end{dfn}
This definition uses spectral projections and so it might appear to be dependent on the particular Hilbert space $H$ we are considering $A$ to be represented on.  However, this is not the case and we could, for example, state $E^\perp_S(s)\leq P$ more precisely in the abstract C*-algebra context as $f_n(S)\leq P$, for all $n$, where $f_n:\mathbb{R}\rightarrow[0,1]$ is a sequence of continuous functions increasing pointwise to the characteristic function of $(s,\infty)$.

Now it follows fairly directly from \autoref{rr0} that if $A$ (and hence $B$ too) has real rank zero then projections can indeed always be lifted to projections (for another proof in the case when $\pi$ is the canonical map to the Calkin algebra see \cite{Weaver2007}, for example), and that we can even preserve the ordering, as required for the $(\omega,\omega)$-gap problem mentioned above.  More precisely, for any $p\in\mathcal{P}(B)$ and $Q\in\mathcal{P}(A)$ with $p\leq\pi(Q)$ we can choose $P\in\mathcal{P}(A)$ so that we not only have $\pi(P)=p$ but also $P\leq Q$ (see \cite{Bice2009} Theorem 3.4).  Note that, for projections $P$ and $Q$, $P\leq Q$ is equivalent to $||Q^\perp P||=0$.  So we could ask, more generally, if, given $p\in\mathcal{P}(B)$ and $Q\in\mathcal{P}(A)$, we necessarily have $P\in\mathcal{P}(A)$ with \[\pi(P)=p\qquad\textrm{and}\qquad||PQ||=||\pi(PQ)||.\]  As just mentioned, the $||\pi(Q)p||=0$ case has been proved (in \cite{Bice2009}), while the $||\pi(Q)p||=1$ case is trivial.  So assume $||\pi(Q)p||=\sqrt{\lambda}\in(0,1)$ and take $R\in\mathcal{P}(A)$ with $\pi(R)=p$.  For any $\epsilon\in(0,1-\lambda)$ we can find $P\in\mathcal{P}(A)$ with $E^\perp_{RQ^\perp R}(1-\lambda-\epsilon/2-)\leq P\leq E^\perp_{RQ^\perp R}(1-\lambda-\epsilon)$.  Then \[p=E^\perp_{\pi(RQ^\perp R)}(1-\lambda-\epsilon/2-)\leq\pi(P)\leq E^\perp_{\pi(RQ^\perp R)}(1-\lambda-\epsilon)=p\quad\textrm{and}\quad||PQ||\leq\sqrt{\lambda+\epsilon}.\]  So we can at least get arbitrarily close to our goal.  To reach it, we use the projection calculus.

\begin{thm}\label{||PQ||}
Assume $\pi$ is a C*-algebra homomorphism from $A$ to $B$ and we have $R,Q\in\mathcal{P}(A)$ with $||QR||<1$.  Then there exists $P\in\mathcal{P}(A)$ with $\pi(P)=\pi(R)$ and $||PQ||=||\pi(PQ)||$.
\end{thm}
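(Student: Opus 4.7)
The plan is to apply the projection calculus with a truncation function that caps the spectrum at exactly the image norm.

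Set $\lambda=||\pi(QR)||^2=||\pi(RQR)||$. Since $\pi$ is a $*$-homomorphism, $\sigma(\pi(RQR))\subseteq\sigma(RQR)\cup\{0\}\subseteq[0,||QR||^2]\subseteq[0,1)$, and by the definition of $\lambda$ we have $\sigma(\pi(RQR))\subseteq[0,\lambda]$. I would then define $f:\sigma(QR)\to[0,1]$ by
\[
f(t)=\min(t,\lambda).
\]
This $f$ meets the hypotheses of \S\ref{PFC}: $f(0)=0$, the condition $f(1)=1$ is vacuous since $||QR||<1$ forces $1\notin\sigma(QR)$, and $f(t)=t$ near $0$ so $f(t)/t\to 1$, giving $f$ a derivative at $0$. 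The projection calculus then produces $P=P_{Q,R,f}\in\mathcal{P}(A)$ with $QPQ=f(QRQ)$, so
\[
||PQ||^2=||QPQ||=||f(QRQ)||=\max\{f(t):t\in\sigma(QRQ)\}\leq\lambda.
\]

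Next I would verify $\pi(P)=\pi(R)$. Writing $P=UU^*$ with $U=U_{Q,R,f}=QRx_f(RQR)+Q^\perp Ry_f(RQR)$ and using that $\pi$ intertwines the continuous functional calculus (in the unitization if $B$ is non-unital),
\[
\pi(U)=\pi(Q)\pi(R)x_f(\pi(RQR))+\pi(Q)^\perp\pi(R)y_f(\pi(RQR)).
\]
Since $f(t)=t$ on $[0,\lambda]\supseteq\sigma(\pi(RQR))$, both $x_f$ and $y_f$ (continuously extended) equal $1$ on this set, hence $x_f(\pi(RQR))=y_f(\pi(RQR))=1$, and
\[
\pi(U)=\pi(Q)\pi(R)+\pi(Q)^\perp\pi(R)=\pi(R),
\]
giving $\pi(P)=\pi(U)\pi(U)^*=\pi(R)^2=\pi(R)$. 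Combined with the bound above, this forces $||\pi(PQ)||^2=\lambda\leq||PQ||^2\leq\lambda$, so equality holds throughout.

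The only real subtlety lies in the dual role played by $f$: it must simultaneously act as the identity on the image spectrum $\sigma(\pi(RQR))$ so as to preserve $\pi(P)=\pi(R)$, and cap the full spectrum of $QRQ$ at $\lambda$ so as to drive $||PQ||$ down to $||\pi(PQ)||$. The truncation $\min(t,\lambda)$ is essentially forced by these two constraints, and once it is in hand the verification is routine bookkeeping with the projection calculus and the naturality of the continuous functional calculus under $\pi$.
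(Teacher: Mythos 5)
Your proof is correct and follows essentially the same route as the paper: the same truncation $f(t)=\min(t,\|\pi(QR)\|^2)$ fed into the projection calculus, with $P=P_{Q,R,f}$ and naturality of the construction under $\pi$. The only cosmetic differences are that the paper deduces $\pi(P)=\pi(R)$ from the distance formula (\ref{||P-R||}) applied in $B$ rather than by computing $\pi(U_{Q,R,f})$ directly, and it evaluates $\|PQ\|^2=\|f(QRQ)\|$ outright instead of your squeeze via $\|\pi(PQ)\|\leq\|PQ\|$.
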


\begin{proof}
Let $f$ be the function on $\sigma(QR)$ with $f(s)=s$, for $s\leq||\pi(QR)||^2$, and $f(s)=||\pi(QR)||^2$, for $s\geq||\pi(QR)||^2$.  Set $P=P_{Q,R,f}$, and note that $\pi(P)=P_{\pi(Q),\pi(P),f}$.  For all $s\in\sigma(\pi(QR))\subseteq[0,||\pi(QR)||^2]$, $f(s)=s$ and hence $||\pi(P)-\pi(R)||=0$, by (\ref{||P-R||}), i.e. $\pi(P)=\pi(R)$.  But we also have $||PQ||^2=||QPQ||=||f(QRQ)||=||\pi(QR)||^2=||\pi(QP)||^2$.
\end{proof}

Thus if $\pi$ is a homomorphism from a C*-algebra $A$ of real rank zero onto $B$ and we have $p\in\mathcal{P}(B)$ and $Q\in\mathcal{P}(A)$ then we can indeed find $P\in\mathcal{P}(A)$ with $\pi(P)=p$ and $||PQ||=||\pi(PQ)||$.  But note that $||PQ||^2=||PQP||=\max(\sigma(PQP))=\max(\sigma(PQ))$ and so the following theorem shows we can do even better in the real rank zero case.  In fact, as $\sigma(PQ)$ (almost) completely determines how $P$ and $Q$ would be spatially related when represented on a Hilbert space, the following theorem means that any pair of projections can be lifted to another pair with the same spatial relationship, giving the strongest lifting result for a pair of projections that we could possibly hope for.

\begin{thm}\label{sigmapq}
Assume $\pi$ is a homomorphism from a C*-algebra $A$ of real rank zero onto $B$.  For any $p\in\mathcal{P}(B)\backslash\{1\}$ and $Q\in\mathcal{P}(A)$, we have $P\in\mathcal{P}(A)$ with $\pi(P)=p$ and $\sigma(PQ)=\sigma(\pi(PQ))$.
\end{thm}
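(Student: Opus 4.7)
Plan. Write $q=\pi(Q)$ and $K=\sigma(qpq)$. Since $p\neq 1$ forces any lift $P\neq 1$, $0$ lies in both $\sigma(PQ)$ and $\sigma(\pi(PQ))$, so by the identities $\sigma(PQ)\cup\{0\}=\sigma(QPQ)\cup\{0\}$ and $\sigma(pq)=\sigma(qpq)$ from \S\ref{pre}, the goal is equivalent to producing a lift $P$ with $\sigma(QPQ)=K$. I plan to take $P=P_{Q,R,f}$ from the projection calculus of \S\ref{PFC}, for a suitable lift $R\in\mathcal{P}(A)$ of $p$ together with a continuous $f:\sigma(QR)\to K$ satisfying $f|_K=\mathrm{id}$, $f(\sigma(QR))=K$, $f(0)=0$, and $f(1)=1$ whenever $1\in\sigma(QR)$ (the derivative condition at $0,1$ can be sidestepped via the approximation argument in \S\ref{PFC}). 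Indeed, applying formula (\ref{||P-R||}) in $B$ to $\pi(Q),\pi(R)$ and $f|_{\sigma(qp)}=\mathrm{id}$ gives $||\pi(P)-p||=0$, while (\ref{QPQ}) combined with the spectral mapping theorem yields $\sigma(QPQ)=f(\sigma(QRQ))=K$.

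The key point is the construction of the pair $(R,f)$. First, lift $p$ to some $R_0\in\mathcal{P}(A)$ via projection lifting in real rank zero C*-algebras (\cite{Bice2009} Theorem 3.4). Setting $L:=\sigma(QR_0Q)$, one has $K\subseteq L$, and $[0,1]\setminus K$ is an at most countable disjoint union of open ``gaps'' $(a_i,b_i)$ with $a_i,b_i\in K\cup\{0,1\}$. A continuous retraction $L\to K$ fixing $K$ pointwise exists precisely when, for every gap $(a_i,b_i)$ meeting $L$, there is some cut-point $c_i\in(a_i,b_i)\setminus L$ splitting $L\cap(a_i,b_i)$ into the two relatively clopen pieces to be sent to $a_i$ and $b_i$; an analogous cut is required near $1$ if $1\in L\setminus K$. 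The obstacle I expect to be the main one is the situation where $L$ fills an entire subinterval of some gap, so no such $c_i$ is available directly; this is where the real rank zero hypothesis has to be brought in.

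The repair uses \autoref{rr0} applied to the self-adjoint element $QR_0Q\in A$: for any chosen $c_i\in(a_i,b_i)$ and any sufficiently small $\epsilon>0$ with $[c_i-\epsilon,c_i+\epsilon]\subseteq(a_i,b_i)$, there is a projection $E_i\in\mathcal{P}(A)$ with $E^\perp_{QR_0Q}(c_i+\epsilon)\leq E_i\leq E^\perp_{QR_0Q}(c_i-\epsilon)$; because $[c_i-\epsilon,c_i+\epsilon]$ lies in a gap of $\sigma(qpq)$, the image $\pi(E_i)$ is squeezed between two coinciding spectral projections of $qpq$ and is thus the same (abstract) projection regardless of $c_i$ within the gap. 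These $E_i$ serve as cut-offs with which to refine $R_0$, yielding a projection $R\in\mathcal{P}(A)$ with $\pi(R)=p$ and $\sigma(QRQ)$ meeting each gap $(a_i,b_i)$ in at most the single point $c_i$; the endpoint case $1\in L\setminus K$ is handled in the same spirit by extracting, near $1$, a projection in $\ker\pi$ to be subtracted from $R_0$. The principal technical difficulty is organising this refinement across possibly countably many gaps while preserving the lifting condition $\pi(R)=p$ and ensuring convergence of the corrections, for which the quantitative continuity in the projection calculus from (\ref{||P-R||}), together with \autoref{[]cont} and \autoref{veecont}, supplies the needed estimates.
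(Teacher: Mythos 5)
Your endgame is sound and is a genuine repackaging of the paper's method: if you could produce a lift $R$ of $p$ such that $\sigma(QRQ)$ omits at least one point of every gap of $K=\sigma(qpq)$ (and contains $1$ only if $K$ does), then a single application of the projection calculus with a retraction $f\colon\sigma(QRQ)\to K$ fixing $K$, combined with (\ref{||P-R||}) in $B$ and the spectral mapping theorem applied to (\ref{QPQ}), would indeed finish the proof. The genuine gap is that the construction of this $R$ \textemdash\ which is the heart of the matter \textemdash\ is asserted rather than carried out, and the one mechanism you name cannot do it. Projections $E_i$ squeezed between spectral projections of $QR_0Q$ (or of $R_0QR_0$) change nothing by themselves: splitting $R_0=E+(R_0-E)$ along such a cut leaves $Q\bigl(E+(R_0-E)\bigr)Q=QR_0Q$, hence $\sigma(QR_0Q)$, untouched. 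To open a hole at $c_i$ you must actually move part of $R_0$, i.e.\ replace a spectral-type piece of $R_0$ by a repositioned, Murray--von Neumann equivalent projection (another use of the projection calculus), and then three things must be arranged simultaneously: the modified pieces must remain orthogonal so that their sum is again a projection, the image under $\pi$ must be unchanged, and the countably many corrections (one per gap) must converge.

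These three points are exactly what occupies the bulk of the paper's proof of \autoref{sigmapq}: at stage $n$ it takes projections squeezed between spectral projections of $P_nQP_n$ around the midpoint of the $n$-th gap, replaces the two resulting halves of $P_n$ by $P_{Q,T,f_n}$ and $P_{Q,E,g_n}$, where the orthogonality $TE=TQE=0$ needed for the sum to be a projection is obtained only after a careful perturbation argument controlled by (\ref{split}); it then proves $(P_n)$ is Cauchy because successive corrections act on mutually orthogonal $P_0$- and $Q$-invariant subspaces and have size controlled by $\sqrt{(1-s_n)t_n}-\sqrt{s_n(1-t_n)}$, which tends to $0$ with the gap lengths. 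Your appeal to \autoref{[]cont}, \autoref{veecont} and (\ref{||P-R||}) as ``supplying the needed estimates'' does not address the orthogonality and compatibility of infinitely many such modifications, nor why they can be made without disturbing the lifting condition; that is where the real work (and the heaviest use of real rank zero) lies, so as it stands the proposal defers the essential step. Two minor remarks: for the retraction you only need one omitted point per gap, not the whole gap cleared down to a single point, so your stated intermediate goal is stronger than necessary; and the discrepancy at $1$ is handled more cleanly at the very end, as in the paper, by replacing $P$ with $[PQ^\perp]$ when $1\notin\sigma(\pi(PQ))$, although your kernel-projection subtraction near $1$ could also be made to work.
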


\begin{proof}
As $\sigma(p\pi(Q))$ is closed, there exists a sequence of disjoint open intervals $(I_n)\subseteq(0,1)$ such that $(0,1)\backslash\sigma(p\pi(Q))=\bigcup_nI_n$.  For each $n$, let $s_n,t_n\in(0,1)$ be such that $I_n=(s_n,t_n)$, set $r_n=(s_n+t_n)/2$ and define $f_n$ and $g_n$ on $[0,1]$ by
\begin{eqnarray*}
f_n(r) &=& r,\textrm{ for }r\leq s_n,\\
f_n(r) &=& s_n,\textrm{ for }r\geq s_n,\\
g_n(r) &=& t_n,\textrm{ for }r\leq t_n,\textrm{ and}\\
g_n(r) &=& r,\textrm{ for }r\geq t_n.
\end{eqnarray*}
Recursively define $(P_n)\subseteq\mathcal{P}(A)$ as follows.  Let $P_0$ be any projection in $A$ with $\pi(P_0)=p$ and, given $n$, take $\delta>0$ and $E_-,E,E_+\in\mathcal{P}(A)$ such that \[E^\perp_{P_nQP_n}(r_n+2\delta)\leq E_+\leq E^\perp_{P_nQP_n}(r_n+\delta)\leq E\leq E^\perp_{P_nQP_n}(r_n-\delta)\leq E_-\leq E^\perp_{P_nQP_n}(r_n-2\delta).\]  Let $P=E_--E_+$ and $R=E-E_+$.  By choosing $\delta$ sufficiently small, we can ensure that $S=[(R\vee[QR])^\perp(P-R)]$ is well defined and $||S-(P-R)||$ is as small as we like, by (\ref{split}).

We claim that $SE=0=SQE$ or, equivalently, assuming $A$ is represented on a Hilbert space, $\mathcal{R}(S)\perp\mathcal{R}(E)+\mathcal{R}(QE)$.  To see this, note that, as $P-R\leq E_{P_nQP_n}(r_n+\delta)$ and $P-R\leq P_n$, \[\mathcal{R}(P-R)\perp V=\mathcal{R}(E^\perp_{P_nQP_n}(r_n+\delta))+\mathcal{R}(QE^\perp_{P_nQP_n}(r_n+\delta)).\]  Letting $R'=E-E^\perp_{P_nQP_n}(r_n+\delta)$, by the same reasoning we have $\mathcal{R}(R')\perp V$ and hence also $\mathcal{R}(QR')\perp V$.  Thus $\mathcal{R}((R'\vee[QR'])^\perp(P-R))\perp V$.  As $R-R'\leq E^\perp_{P_nQP_n}(r_n+\delta)$, it follows that $(R'\vee[QR'])^\perp(P-R)=(R\vee[QR])^\perp(P-R)$ and hence $\mathcal{R}(S)\perp V$.  As $E-R\leq E^\perp_{P_nQP_n}(r_n+\delta)$, we have $\mathcal{R}(E)+\mathcal{R}(QE)=V+\mathcal{R}(R)+\mathcal{R}(QR)$.  We certainly have $\mathcal{R}(S)\perp\mathcal{R}(R)+\mathcal{R}(QR)$ and hence, finally, $\mathcal{R}(S)\perp\mathcal{R}(E)+\mathcal{R}(QE)$.

From this it follows that, setting $T=S\vee(P_n-E_-)$, we have $TE=TQE=0$.  Thus $P_{Q,T,f_n}P_{Q,E,g_n}=0$ and we may define the projection $P_{n+1}$ to be $P_{Q,T,f_n}+P_{Q,E,g_n}$, completing the recursion.  As in \autoref{||PQ||}, it follows that $\pi(P_n)=p$, for all $n\in\omega$.  From (\ref{||P-R||}) it follows that $||P_{Q,E,g_n}-E||,||P_{Q,T,f_n}-T||\leq\sqrt{(1-s_n)t_n}-\sqrt{s_n(1-t_n)}$.  We can also ensure that, at each stage of the recursion, $||S-(P-R)||$ is small enough that $||T-(P_n-E)||<\sqrt{(1-s_n)t_n}-\sqrt{s_n(1-t_n)}$ so $||P_{n+1}-P_n||\leq2\sqrt{(1-s_n)t_n}-\sqrt{s_n(1-t_n)}$.  In fact, at each stage of the recursion, we only modify $P_n$ to obtain $P_{n+1}$ on the ($P_n$ and $Q$ invariant) subspace $\mathcal{R}(E')+\mathcal{R}(QE')$, where $E'=E_{P_0QP_0}(t_n)-E_{P_0QP_0}(s_n)$.  These subspaces are perpendicular for distinct $n$ and hence, for $m>n$, \[||P_m-P_n||\leq\max_{k\geq n}2\sqrt{(1-s_k)t_k}-\sqrt{s_k(1-t_k)}.\]  The function $\sqrt{(1-s)t}-\sqrt{s(1-t)}$ is continuous on $[0,1]\times[0,1]$ and $0$ on the diagonal, and hence approaches $0$ as $|s-t|\rightarrow0$.  This means that $(P_n)$ is a Cauchy sequence and has a limit $P\in\mathcal{P}(A)$.  As $\pi(P_n)=p$, for all $n$, we certainly have $\pi(P)=p$.  For $n>m$, the projection $F$ onto $\mathcal{R}(E^\perp_{P_{m+1}QP_{m+1}}(r_m))+\mathcal{R}(QE^\perp_{P_{m+1}QP_{m+1}}(r_m))$ commutes with both $P_n$ and $Q$, and we have $||Q^\perp P_nF||^2\leq1-t_n^2$ and $||QP_nF^\perp||^2\leq s_n^2$.  Thus $F$ commutes with $P$ too and $||Q^\perp PF||^2\leq1-t_n^2$ and $||QPF^\perp||^2\leq s_n^2$, which means that $\sigma(PQ)\cap(s_n,t_n)=\emptyset$.  So $\sigma(PQ)\cap(0,1)=\sigma(\pi(PQ))\cap(0,1)$ and, as $p\neq1$, $0\in\sigma(PQ)\cap\sigma(\pi(PQ))$.  If $1\notin\sigma(\pi(PQ))$ then $PQ^\perp$ must be well-supported and we may simply replace $P$ with $[PQ^\perp]$ to obtain $\sigma(PQ)=\sigma(\pi(PQ))$.
\end{proof}

With $P$ as above, it automatically follows that we also have $\sigma(PQ^\perp)\backslash\{1\}=\sigma(\pi(PQ^\perp))\backslash\{1\}$.  If $1\notin\sigma(\pi(PQ^\perp))$ then, as in the last line of the proof, $PQ$ is well-supported and so we may replace $P$ with $[PQ]$ to actually obtain $\sigma(PQ^\perp)=\sigma(\pi(PQ^\perp))$.

\begin{cor}
Assume $\pi$ is a homomorphism from a C*-algebra $A$ of real rank zero onto $B$.  For any idempotent $i\in\mathcal{P}(B)\backslash\{1\}$, we have idempotent $I\in A$ with $\pi(I)=i$ and $\sigma(I^*I)=\sigma(i^*i)$.
\end{cor}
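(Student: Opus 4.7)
The plan is to lift the idempotent $i$ by decomposing it via \autoref{idin} and then applying \autoref{sigmapq} to lift the resulting projection pair in a spatially faithful way. By \autoref{idin} applied in $B$, we have $i=(pq)^{-1}$ for unique $p=[i^*]$ and $q=[i]$ with $||p-q||<1$. Since $||1-q||\in\{0,1\}$ for a projection $q$, $p=1$ would force $q=1$ and hence $i=1$, so the hypothesis $i\neq 1$ gives $p,q\in\mathcal{P}(B)\setminus\{1\}$, as required for \autoref{sigmapq}.

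Next, I would lift $q$ to some $Q\in\mathcal{P}(A)$ (possible because $A$ has real rank zero) and apply \autoref{sigmapq} with $(p,Q)$ to produce $P\in\mathcal{P}(A)$ with $\pi(P)=p$ and $\sigma(PQ)=\sigma(pq)$.  Since $||p-q||<1$ gives $||pq^\perp||<1$, so $1\notin\sigma(\pi(PQ^\perp))$, the remark after \autoref{sigmapq} allows me to replace $P$ by $P'=[PQ]$, after which $\sigma(P'Q^\perp)=\sigma(pq^\perp)$.  Crucially, this replacement does not alter $PQ$: since $[PQ]\leq P$ gives $[PQ]\,P=[PQ]$, we have $P'Q=[PQ]\,Q=[PQ]\,PQ=PQ$, so $\sigma(P'Q)=\sigma(pq)$ is retained. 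The identity $[P'Q]=[PQ]=P'$ then yields $||{P'}^\perp Q||<1$, while $\sigma(P'Q^\perp)=\sigma(pq^\perp)$ yields $||P'Q^\perp||<1$, giving $||P'-Q||<1$.  Moreover $\pi(P')=[pq]=p$ since $\pi$ commutes with the operation $[\,\cdot\,]=\chi((\cdot)(\cdot)^*)$ on well-supported elements whose images are well-supported.

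Finally, I would set $I=(P'Q)^{-1}$.  By \autoref{idin} this is idempotent in $A$, and $\pi(I)=(pq)^{-1}=i$ because $\pi$ respects the quasi-inverse of the well-supported element $P'Q$.  The identity $T^{-1*}T^{-1}=(TT^*)^{-1}$ established just before~(\ref{||T^{-1}||}) gives $I^*I=(P'QP')^{-1}$ and likewise $i^*i=(pqp)^{-1}$; since $\sigma(P'QP')=\sigma(P'Q)=\sigma(pq)=\sigma(pqp)$, applying $t\mapsto 1/t$ (extended by $0\mapsto 0$) via the continuous functional calculus delivers $\sigma(I^*I)=\sigma(i^*i)$.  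The delicate point is the replacement step $P\mapsto[PQ]$: the raw output of \autoref{sigmapq} need not satisfy $||P-Q||<1$, so without this step $(PQ)^{-1}$ could fail to be an idempotent lift of $i$; the bulk of the verification is therefore checking that the replacement simultaneously lands in $\pi^{-1}(p)$, preserves $\sigma(PQ)$, and forces both $||{P'}^\perp Q||<1$ and $||P'Q^\perp||<1$.
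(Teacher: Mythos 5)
Your route is essentially the paper's: decompose $i=(pq)^{-1}$ via \autoref{idin}, lift $q$ to some $Q\in\mathcal{P}(A)$, apply \autoref{sigmapq} (noting $p\neq1$, which you rightly check) to obtain $P$ with $\pi(P)=p$ and $\sigma(PQ)=\sigma(pq)$, set $I=(PQ)^{-1}$, and compute $\sigma(I^*I)=\{0\}\cup(\sigma(PQ))^{-1}=\{0\}\cup(\sigma(pq))^{-1}=\sigma(i^*i)$ using $I^*I=(PQP)^{-1}$. The paper stops there. The replacement step $P\mapsto[PQ]$ that you single out as ``the delicate point'' and ``the bulk of the verification'' is unnecessary: the forward direction of \autoref{idin} shows that $(PQ)^{-1}$ is idempotent as soon as $PQ$ is well-supported, and $||P-Q||<1$ is only needed for the converse direction (recovering $P=[I^*]$ and $Q=[I]$ from a given idempotent). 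Well-supportedness of $PQ$ is automatic here, since $pq$ is well-supported (it has quasi-inverse $i$), so $\sigma(pq)\setminus\{0\}$ is bounded away from $0$ and $\sigma(PQ)=\sigma(pq)$ transfers this upstairs; likewise $\pi(I)=i$ needs only that $\pi$ commutes with the continuous functional calculus defining the quasi-inverse, not $||P-Q||<1$.

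Moreover, the detour contains a genuine error: you claim that $[P'Q]=P'$ yields $||P'^\perp Q||<1$. The paper's equivalence is $||P'Q^\perp||<1\Leftrightarrow P'Q$ is well-supported and $[P'Q]=P'$; it gives no control of $||P'^\perp Q||$. For instance, with $P'$ the projection onto $\mathrm{span}\{e_1\}$ and $Q$ the projection onto $\mathrm{span}\{(e_1+e_2)/\sqrt{2},\,e_3\}$ one has $[P'Q]=P'$ and $||P'Q^\perp||=1/\sqrt{2}$, yet $||P'^\perp Q||=1$. Nor do $\sigma(P'Q)=\sigma(pq)$ and $\sigma(P'Q^\perp)=\sigma(pq^\perp)$ rule this out, because whether $1\in\sigma(P'^\perp Q)$ is not determined by those two spectra (the lift $Q$, chosen before $P$ is constructed, may dominate a nonzero projection in $\ker\pi$ orthogonal to $P'$), so $||P'-Q||<1$ cannot be deduced from the facts you cite. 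Since, as explained above, nothing in the corollary requires $||P'-Q||<1$, deleting the replacement step leaves a correct proof that coincides with the paper's.
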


\begin{proof}
By \autoref{idin}, we have $p,q\in\mathcal{P}(B)$ with $i=(pq)^{-1}$.  By \autoref{sigmapq}, we have $P,Q\in\mathcal{P}(A)$ such that $\pi(P)=p$, $\pi(Q)=q$ and $\sigma(PQ)=\sigma(pq)$.  Then $i^*i=(pqp)^{-1}$ so $\sigma(i^*i)=\{0\}\cup(\sigma(pq))^{-1}=\{0\}\cup(\sigma(PQ))^{-1}=\sigma(I^*I)$.
\end{proof}

\begin{thm}\label{pi(U)}
Assume $\pi$ is a homomorphism from a C*-algebra $A$ of real rank zero onto $B$.  For any partial isometry $u\in B$, we have a partial isometry $U\in A$ with $\pi(U)=u$ and $||U^2||=||u^2||$.
\end{thm}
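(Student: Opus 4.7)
The plan rests on the observation that $\|u^2\| = \|pq\|$ where $p = u^*u$ and $q = uu^*$. Indeed,
\[
\|u^2\|^2 = \|u^{*2}u^2\| = \|u^*pu\| = \|pu\|^2 = \|puu^*p\| = \|pqp\| = \|pq\|^2,
\]
and analogously $\|U^2\| = \|PQ\|$ for any partial isometry $U$ with $U^*U = P$, $UU^* = Q$. Since $\|PQ\| \geq \|pq\|$ is automatic for any $\pi$-lift, the theorem reduces to producing a partial isometry $U \in A$ with $\pi(U) = u$ and $\|PQ\| \leq \|pq\|$.

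The first step is to lift $u$ to \emph{some} partial isometry $V \in A$.  Pick any $T \in A$ with $\pi(T) = u$; then $\pi(T^*T) = p$.  By \autoref{rr0} applied to $T^*T$, for any $0 < s < t < 1$ we find $P \in \mathcal{P}(A)$ sandwiched as $E^\perp_{T^*T}(t) \leq P \leq E^\perp_{T^*T}(s)$.  The sandwich gives $\pi(P) = p$ (both extremes $\pi$-image to $p$) and $PT^*TP \geq sP$, so $TP$ is well-supported and its polar decomposition $TP = V|TP|$ defines a partial isometry $V$ with $V^*V = P$, $VV^* =: Q_0$ lifting $q$, and $\pi(V) = \pi(TP)\pi(|TP|)^{-1} = up \cdot p = u$ (using that the quasi-inverse of the projection $p$ is $p$ itself).

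The second step uses the projection calculus of \S\ref{PFC} to correct $Q_0$.  The case $\|pq\| = 1$ is trivial (take $U = V$), so assume $\|pq\| < 1$.  Let $f \colon [0,1] \to [0,1]$ be continuous with $f(0) = 0$, $f(s) = s$ on $[0, \|pq\|^2]$, and $f(s) = \|pq\|^2$ on $[\|pq\|^2, 1]$.  Set $\tilde Q := P_{P, Q_0, f}$ and $W := U_{P, Q_0, f}$.  By~\eqref{QPQ}, $P\tilde Q P = f(PQ_0P)$, hence $\|P \tilde Q\|^2 \leq \|pq\|^2$; and since $\sigma(qpq) \subseteq [0, \|pq\|^2]$ where $f = \mathrm{id}$, we get $\pi(\tilde Q) = P_{p,q,\mathrm{id}} = q$ and $\pi(W) = U_{p,q,\mathrm{id}} = q$.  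Now $U := WV$ satisfies $U^*U = V^*Q_0V = V^*VV^*V = P$, $UU^* = WQ_0W^* = WW^*WW^* = \tilde Q$, $\pi(U) = qu = u$, and $\|U^2\| = \|P\tilde Q\| = \|pq\| = \|u^2\|$, as required.

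The main obstacle is that the projection calculus requires $f(1) = 1$ whenever $1 \in \sigma(PQ_0)$, which conflicts with $f(1) = \|pq\|^2 < 1$.  We address this by a preliminary reduction: since $\|pq\| < 1$, $\sigma(pqp) \subseteq [0, \|pq\|^2]$, so \autoref{rr0} applied to $PQ_0P$ produces a projection $R \in \mathcal{P}(A)$ with $E^\perp_{PQ_0P}(\|pq\|^2 + \epsilon) \leq R \leq E^\perp_{PQ_0P}(\|pq\|^2 + \epsilon/2)$ for small $\epsilon > 0$, and automatically $\pi(R) = 0$ since $\pi(E^\perp_{PQ_0P}(\|pq\|^2 + \epsilon/2)) = E^\perp_{pqp}(\|pq\|^2 + \epsilon/2) = 0$.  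Replacing $V$ by $V(P - R)$ yields a partial isometry still lifting $u$ whose new initial-final product $(P-R)Q_0(P-R) = (P-R)(PQ_0P)(P-R)$ has spectrum bounded by $\|pq\|^2 + \epsilon$, so the construction above applies to the reduced pair; a final limit argument as $\epsilon \to 0$ delivers the desired $U$.
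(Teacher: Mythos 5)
Your argument is correct, and its second half is arranged differently from the paper's. Both proofs start the same way: lift $u$ to some $T$, use \autoref{rr0} to sandwich a projection $P$ in the spectrum of $T^*T$, and take the polar-decomposition partial isometry of $TP$; and both exploit the same cut-off function $f(s)=\min(s,\|pq\|^2)$. From there the paper lifts the final projection $q=uu^*$ afresh to a projection $R$ with $\|PR\|=\|u^2\|$ (via the remark following \autoref{||PQ||}), makes a further real-rank-zero adjustment so that $\|Q^\perp R\|<1$, and splices with the polar-decomposition isometry $U_{RQ}$, getting $U=U_{RQ}U_{TP}$ with $U^*U\leq P$ and $UU^*=R$. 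You instead keep the final projection $Q_0$ of the first lift and rotate it with the partial isometry $U_{\cdot,\cdot,f}$ of the projection calculus of \S\ref{PFC} itself, after truncating the initial projection by a spectral projection that dies under $\pi$ so that the cut-off $f$ becomes admissible. In effect you inline \autoref{||PQ||} through its underlying partial isometry rather than invoking it as a black box; this avoids a second application of the norm-lifting result and of the polar decomposition, at the cost of the preliminary truncation, and it makes transparent why the only obstruction is the requirement $f(1)=1$ when $1\in\sigma(PQ_0)$.

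Two small repairs. First, after replacing $V$ by $V'=V(P-R)$ the relevant product is $P'Q_0'P'=(P-R)V(P-R)V^*(P-R)$, where $P'=P-R$ and $Q_0'=V(P-R)V^*$, not $(P-R)Q_0(P-R)$; but since $V(P-R)V^*\leq VPV^*=Q_0$, it is dominated by $(P-R)Q_0(P-R)$, so your spectral bound $\|P'Q_0'P'\|\leq\|pq\|^2+\epsilon<1$ survives, which is all that is needed for $f$ to be admissible on $\sigma(P'Q_0')$. Second, no limit as $\epsilon\to0$ is needed (nor is it clear the resulting lifts would converge): for any fixed $\epsilon<1-\|pq\|^2$ the cut-off already gives $\|U^2\|^2=\|f(P'Q_0'P')\|\leq\|pq\|^2$ exactly, since $f\leq\|pq\|^2$ everywhere, and the reverse inequality $\|U^2\|\geq\|u^2\|$ is automatic.
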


\begin{proof}
Take $T\in A$ with $\pi(T)=u$ and let $P\in\mathcal{P}(A)$ be such that $E^\perp_{T^*T}(2/3)\leq P\leq E^\perp_{T^*T}(1/3)$, so $\pi(P)=u^*u$ and $\pi(TP)=uu^*u=u$.  Then $TP$ is well-supported and hence we have a partial isometry $U_{TP}\in A$ with $\pi(U)=u(\sqrt{u^*u})^{-1}=uu^*u=u$.  So $P=U_{TP}^*U_{TP}$ and we may let $Q=U_{TP}U_{TP}^*$.  Now let $R\in\mathcal{P}(A)$ be such that $\pi(R)=uu^*=\pi(Q)$ and $||PR||=||u^*u^2u^*||=||u^2||$.  By replacing $R$ with $R'$ such that $E^\perp_{RQR}(2/3)\leq R'\leq E^\perp_{RQR}(1/3)$ if necessary, we may assume that $||Q^\perp R||<1$.  Thus $RQ$ is well-supported, $[RQ]=R$ and, setting $U=U_{RQ}U_{TP}$, we have
\[UU^*=U_{RQ}U_{TP}U_{TP}^*U^*_{RQ}=U_{RQ}QU^*_{RQ}=U_{RQ}U^*_{RQ}=[RQ]=R.\]
Also $U^*U\leq P$ so $||U^2||=||U^*U^2U^*||\leq||PR||=||u^2||$ and $\pi(U)=U_{\pi(RQ)}u=uu^*u=u$.
\end{proof}

\begin{cor}
Assume $\pi$ is a homomorphism from a C*-algebra $A$ of real rank zero onto $B$.  For any partial isometry $u\in B\backslash\{1\}$ such that $u^*u^2$ is well-supported and positive, we have a partial isometry $U\in A$ with $\pi(U)=u$ and $\sigma(U)=\sigma(u)$.
\end{cor}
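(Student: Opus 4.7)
The plan is to use the decomposition at the end of \S\ref{pre} to split $u = u_+ + u_0$ into a piece of the form considered in \autoref{U} and a nilpotent piece with $u_0^2 = 0$, then lift each piece separately via \autoref{sigmapq} and \autoref{pi(U)} inside orthogonal corners of $A$. Since $u^*u^2 \geq 0$ is well-supported, setting $p_+ = [u^*u^2]$, $p_0 = u^*u - p_+$, $u_+ = up_+$ and $u_0 = up_0$ gives $u = u_+ + u_0$ with mutually orthogonal supports and ranges and $u_0^2 = 0$. Writing $q_+ = u_+u_+^*$, a short calculation shows $u_+^*u_+^2 = u^*u^2$ (positive and well-supported) and $u_+^2 = q_+p_+$, with $q_+p_+q_+ = u(u^*u^2)^2u^*$ having range $\mathcal{R}(q_+)$; thus $[u_+^2] = q_+$, so \autoref{UPQ} gives $\|p_+ - q_+\| < 1$, placing $u_+$ in the class of \autoref{U}.

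Next I would lift the orthogonal projections $e_+ = p_+ \vee q_+$ and $e_0 = p_0 \vee q_0$ to orthogonal $E_+, E_0 \in \mathcal{P}(A)$: lift $e_0$ to any $F \in \mathcal{P}(A)$, then apply \autoref{sigmapq} with $p = e_+$ and $Q = F$ to produce $E_+$ satisfying $\sigma(E_+F) = \sigma(e_+e_0) = \{0\}$, so $E_+F = 0$; set $E_0 = F$. Inside the real-rank-zero corner $E_+AE_+$, which surjects onto $e_+Be_+$, a further application of \autoref{sigmapq} lifts $p_+, q_+$ to $P_+, Q_+ \leq E_+$ with $\sigma(P_+Q_+) = \sigma(p_+q_+)$; the remark after \autoref{sigmapq} gives $\sigma(P_+Q_+^\perp) = \sigma(p_+q_+^\perp)$ too, and in particular $\|P_+ - Q_+\| < 1$. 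By \autoref{U} we obtain the unique partial isometry $U_+ = U_{Q_+P_+}$ with $U_+^*U_+ = P_+$, $U_+U_+^* = Q_+$, $U_+^*U_+^2 \in A_+$, and uniqueness applied to $\pi(U_+)$ forces $\pi(U_+) = u_+$. Inside $E_0AE_0$, \autoref{pi(U)} lifts $u_0$ to a partial isometry $U_0$ with $\pi(U_0) = u_0$ and $\|U_0^2\| = 0$, i.e., $U_0^2 = 0$.

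Finally, $U := U_+ + U_0$ is a partial isometry (its supports and ranges sit in the orthogonal corners $E_+AE_+$ and $E_0AE_0$) with $\pi(U) = u$. The block-diagonal form reduces $\sigma(U) = \sigma(u)$ to $\sigma(U_+) = \sigma(u_+)$, as the nilpotent pieces contribute only $\{0\}$; this in turn follows from the classification of C*-algebras generated by a pair of projections, under which the matching $\sigma(P_+Q_+) = \sigma(p_+q_+)$ yields a $*$-isomorphism between the unital C*-algebras generated by the two pairs sending $U_+$ to $u_+$, combined with the fact that spectra in unital C*-subalgebras coincide with spectra in the ambient algebra. Both spectra contain $0$ because the positivity and well-supportedness of $u^*u^2$ together with $u \neq 1$ force $u$ (and hence $U$) to be non-unitary. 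The main obstacle is Step 1: extracting $\|p_+ - q_+\| < 1$ from the positivity and well-supportedness of $u^*u^2$ via the range analysis of $q_+p_+q_+$; once this structural splitting is established, everything else is bookkeeping layered on \autoref{sigmapq}, \autoref{pi(U)}, and \autoref{U}.
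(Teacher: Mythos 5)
Your overall strategy is the paper's: split $u=u_++u_0$ as described after \autoref{U}, lift the nilpotent piece by \autoref{pi(U)}, lift the pair $(p_+,q_+)$ with matching spectrum by \autoref{sigmapq} in a corner orthogonal to the lifted nilpotent piece, set $U=U_0+U_{Q_+P_+}$ and read off the spectrum. The genuine gap is in your second step: you treat $e_+=p_+\vee q_+$ as a projection of $B$, apply \autoref{sigmapq} with $p=e_+$, and work with the corner $e_+Be_+$. But $p_+\vee q_+$ need not exist in $B$: the paper's $P\vee Q$ (\autoref{PveeQ}) is only defined when $\|PQ\|<1$, and here you only know $\|p_+-q_+\|<1$, which is perfectly compatible with $\|p_+q_+\|=1$ and $\mathcal{R}(p_+)+\mathcal{R}(q_+)$ failing to be closed, in which case no projection of $B$ (indeed none in $C^*(p_+,q_+)$) has the required range. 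Concretely, let $p=(p_n)$ and $q=(q_n)$ be rank-one projections in $\prod_n M_2$ with the angle between $\mathcal{R}(p_n)$ and $\mathcal{R}(q_n)$ strictly positive but tending to $0$, and take $A=B=C^*(p,q)$, $\pi=\mathrm{id}$, $u=U_{qp}$: then $u^*u^2=|qp|$ is positive and well-supported, yet $p\vee q\notin B$, so your $E_+$ cannot be constructed and the surjection onto ``$e_+Be_+$'' is meaningless. The repair is exactly the paper's route, which never mentions $e_+$: note that $F=U_0U_0^*+U_0^*U_0$ is already a projection of $A$ (since $U_0^2=0$), and apply \autoref{sigmapq} inside the hereditary subalgebra of elements annihilated by $F$, which is again of real rank zero and is mapped by $\pi$ onto the corresponding corner of $B$ containing $p_+$ and $q_+$ (as $p_0u=0$ forces $q_0p_0=0$, so $p_+,q_+\perp e_0=\pi(F)$); this produces $P,Q$ with $PF=QF=0$, $\pi(P)=p_+$, $\pi(Q)=q_+$ and $\sigma(PQ)=\sigma(p_+q_+)$, after which your reassembly goes through.

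A secondary point: for $\sigma(U)=\sigma(u)$ you invoke the two-projection classification to get a $*$-isomorphism matching $(P_+,Q_+)$ with $(p_+,q_+)$, but $\sigma(P_+Q_+)=\sigma(p_+q_+)$ alone does not obviously yield such an isomorphism (one must also control the behaviour at the endpoints $0$ and $1$). The detour is unnecessary: as in the paper, since the nilpotent block is orthogonal, $U^*U^2=|QP|$, so $\sigma(U)=\sigma(UU^*U)=\sigma(U^*U^2)=\sigma(|QP|)=\sqrt{\sigma(PQ)}=\sqrt{\sigma(p_+q_+)}=\sigma(u)$, using $\sigma(ST)\setminus\{0\}=\sigma(TS)\setminus\{0\}$ and the usual bookkeeping at $0$ (where $u\neq1$ enters). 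With these two corrections your argument coincides with the paper's proof.
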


\begin{proof}
Split $u$ up into two partial isometries $u_0$ and $u_+$, where $u_0^2=0$ and $[u_+^2]=u_+u_+^*$, as mentioned after \autoref{U}.  By \autoref{pi(U)}, we have a partial isometry $U_0\in A$ such that $U_0^2=0$ and $\pi(U_0)=U_0$.  Take $P,Q\in\mathcal{P}(A)$ such that $\pi(P)=p=u_+^*u_+$, $\pi(Q)=q=u_+u_+^*$ and $P(U_0U_0^*+U_0^*U_0)=Q(U_0U_0^*+U_0^*U_0)=0$.  By the proof of \autoref{||PQ||}, we may also assume that $\sigma(PQ)=\sigma(pq)$.  Setting $U=U_0+U_{QP}$ we then have $\pi(U)=\pi(u)$ and \[\sigma(U)=\sigma(UU^*U)=\sigma(U^*U^2)=\sigma(|QP|)=\sqrt{\sigma(PQ)}=\sqrt{\sigma(pq)}=\sigma(u).\]
\end{proof}

It would be interesting to know if this corollary can be generalized, in particular if it holds for the case when $u^*u^2$ is only assumed to be self-adjoint and/or not necessarily well-supported.  In trying to extend this to the self-adjoint case we were lead to the following simple question.  Given $\pi$, $A$ and $B$ as above and $p,q,r\in\mathcal{P}(B)$ with $pqr=0=pr$, is it possible to find $P,Q,R\in\mathcal{P}(A)$ such that $\pi(P)=p$, $\pi(Q)=q$, $\pi(R)=r$ and $PQR=0=PR$?  For example, if $||pq||<1$ and $||pq^\perp||<1$ then the answer is yes, for we can take $S\in\mathcal{P}(A)$ with $\pi(S)=p\vee[qp]$ and then choose $P,Q_p\leq S$ and $R,Q_r\leq S^\perp$ such that $\pi(P)=p$, $\pi(Q_p)=[qp]$, $\pi(R)=r$ and $\pi(Q_r)=q-[qp]$.  Setting $Q=Q_p+Q_r$ then completes the set of required liftings.  If $B$ were the Calkin algebra, for example, then using the theory from \cite{Bice2009} and the fact that $B$ is $\sigma$-closed and has no $(\omega,\omega)$-gaps, we could, just under the assumption that $||pq^\perp||<1$, find $s\in\mathcal{P}(A)$ commuting with $q$ such that $p\leq s$ and $r\leq s^\perp$, and then perform the same argument with $S\in\mathcal{P}(A)$ such that $\pi(S)=s$ (and this would be enough to extend the above result to the case when $u^*u^2$ self-adjoint but still well-supported).  We do not know if the result holds in general, however.

\section{Excising Pure States and Kadison's Transitivity Theorem}\label{ESKT}

We now apply the theory developed so far to strengthen two fundamental C*-algebra results in the real rank zero case.  The first of these says that pure states on C*-algebras of real rank zero can be excised exactly on projections.  The original theorem it extends is the following version of \cite{AkemannAndersonPedersen1986} Proposition 2.2, which by now has become an essential tool in the operator algebraists toolbox (for example, see \cite{BrownOzawa2008} Lemma 1.4.11 for Kishimoto's slick proof of Glimm's lemma using it).

\begin{thm}
For any $\epsilon>0$, pure state $\phi$ on a C*-algebra $A$ and $S\in A^1_+$, there exists $T\in A^1_+$ such that $\phi(T)=1$ and $||TST-\phi(S)T^2||<\epsilon$.
\end{thm}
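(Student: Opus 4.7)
The plan is to apply the GNS construction to the pure state $\phi$, leverage the irreducibility of the resulting representation via Kadison's transitivity theorem, and then lift operator-norm control in the GNS picture to the full $A$-norm via a hereditary-subalgebra approximate-identity argument.

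First, after adjoining a unit if necessary (and extending $\phi$ by $\phi(1)=1$, which preserves purity), I replace $S$ by $R := S - \phi(S)\cdot 1$, which is self-adjoint with $\phi(R) = 0$ and satisfies $TST - \phi(S)T^2 = TRT$. The goal thereby reduces to producing $T \in A_+^1$ with $\phi(T) = 1$ and $\|TRT\| < \epsilon$. Passing to the GNS triple $(\pi, H_\phi, \xi)$, which is irreducible by purity, a short Cauchy--Schwarz calculation forces $\pi(T)\xi = \xi$ whenever $T \in A_+^1$ satisfies $\phi(T) = 1$: indeed $T \leq 1$ gives $\phi(T^2) \leq 1$, while $\phi(T^2) \geq \phi(T)^2 = 1$, so $\phi(T^2) = 1$ and $\|\pi(T)\xi - \xi\|^2 = \phi(T^2) - 2\phi(T) + 1 = 0$. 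In particular every such $T$ automatically satisfies $\phi(TRT) = \langle \pi(R)\xi,\xi\rangle = 0$.

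The ideal candidate would be a $T$ with $\pi(T)$ close to the rank-one projection $\xi\xi^*$ in operator norm, since then $\pi(TRT) \approx \phi(R)\,\xi\xi^* = 0$; but in general $\xi\xi^*$ is not approximable in operator norm by elements of $\pi(A)$ (for instance, in simple non-type-I algebras). Instead, Kadison's transitivity theorem gives, for any prescribed finite-dimensional $W \subseteq \xi^\perp$, a self-adjoint $b \in A$ with $\pi(b)$ acting as $\xi\xi^*$ on $\mathbb{C}\xi \oplus W$; applying the continuous functional calculus with any $f\in C(\sigma(b))$ taking values in $[0,1]$ with $f(0)=0$ and $f(1)=1$ then yields $T = f(b) \in A_+^1$ with $\phi(T)=1$ and $\pi(T)|_W = 0$.

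To upgrade this finite-dimensional control into a genuine $A$-norm bound on $TRT$, I would combine the transitivity construction with the positive approximate unit $(e_\lambda)$ of the hereditary subalgebra $N_\phi = L_\phi \cap L_\phi^*$, where $L_\phi = \{a \in A : \phi(a^*a) = 0\}$ is the left kernel. The elements $T_\lambda := (1-e_\lambda)^{1/2}$ lie in $A_+^1$ with $\phi(T_\lambda) = 1$ (since $\pi(e_\lambda)\xi = 0$) and satisfy $\|T_\lambda a T_\lambda\| \to 0$ for every $a \in N_\phi$. Decomposing $R = R_+ - R_-$ with $R_\pm \geq 0$ and $\phi(R_+) = \phi(R_-) =: c$, the case $c = 0$ is handled directly, since then $R_\pm \in N_\phi$; when $c > 0$, one first uses Kadison transitivity to absorb the ``$c\,\xi\xi^*$-contributions'' of $R_\pm$ into $cT^2$, leaving a remainder in $N_\phi$ that is then driven to zero by the approximate-identity argument.

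The hard part will be this last combination step: orchestrating the transitivity and approximate-identity constructions so that a single $T$ simultaneously controls the ``$\xi$-diagonal'' part (through transitivity) and the ``off-diagonal'' part (through the hereditary approximate identity). This orchestration is precisely where the pure state hypothesis enters essentially, since it is purity that makes Kadison transitivity available for $\pi_\phi$ in the first place.
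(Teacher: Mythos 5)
You should first note that the paper does not prove this statement at all: it is quoted verbatim from \cite{AkemannAndersonPedersen1986} (Proposition 2.2) as a known tool, so the only meaningful comparison is with the standard excision argument. Your reductions are fine: passing to $R=S-\phi(S)1$, the Cauchy--Schwarz observation that $T\in A^1_+$ with $\phi(T)=1$ forces $\pi(T)\xi=\xi$, and the behaviour of $1-e_\lambda$ for an approximate unit $(e_\lambda)$ of the hereditary subalgebra $N_\phi=L_\phi\cap L_\phi^*$ (modulo the small point that $(1-e_\lambda)^{1/2}$ lives in the unitization, so a further one-line adjustment is needed to land back in $A^1_+$ when $A$ is non-unital). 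But the proof is not complete, and the step you yourself flag as ``the hard part'' is exactly where it breaks. The case $c=\phi(R_+)=\phi(R_-)>0$ \emph{is} the theorem, and the proposed mechanism -- absorb the $c\,\xi\xi^*$-contributions of $R_\pm$ by Kadison transitivity, leaving a remainder in $N_\phi$ -- does not work as stated. If you use (self-adjoint) transitivity to produce $b_\pm\in A_{sa}$ with $\pi(b_\pm)\xi=\pi(R_\pm)\xi$, then indeed $R_\pm-b_\pm\in N_\phi$, but the untreated piece $b_+-b_-$ is again an arbitrary self-adjoint element with $\phi(b_+-b_-)=0$: you have reproduced the original problem, not reduced it. If instead you only ask $\pi(b_\pm)$ to look like $c\,\xi\xi^*$ on a finite-dimensional subspace, that gives no control whatsoever of $\|Tb_\pm T-cT^2\|$, since $\pi(T)$ is nowhere near finite rank, and in any case norm estimates in the (generally non-faithful) GNS representation do not upgrade to norm estimates in $A$. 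The compression $1-e_\lambda$ only crushes elements $x$ with $\pi(x)\xi=0=\pi(x^*)\xi$, and nothing in your construction puts the leftover piece into that class.

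The missing ingredient is the structural fact about pure states on which the standard proof turns: $\ker\phi=L_\phi+L_\phi^*$, i.e.\ every $a$ with $\phi(a)=0$ (in particular the self-adjoint $R$) can be written \emph{exactly} as $a=x+y^*$ with $x,y\in L_\phi$. Granting this, both your positive/negative decomposition and the transitivity step become unnecessary: $x^*x,\,y^*y\in N_\phi$, so a two-line approximate-unit estimate such as
\[
\|x(1-e_\lambda)^{1/2}\|^2=\|(1-e_\lambda)^{1/2}x^*x(1-e_\lambda)^{1/2}\|\leq\|x^*x(1-e_\lambda)^{1/2}\|\longrightarrow0
\]
gives $\|(1-e_\lambda)^{1/2}R(1-e_\lambda)^{1/2}\|\to0$, which is the desired excision with $T$ a suitable element of the net. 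This decomposition is where purity genuinely enters (it can be deduced from the minimality of the support projection of $\phi$ in $A^{**}$, or from transitivity-type arguments, but it is an exact algebraic identity in $A$ rather than a statement about finite-rank compressions), and it is precisely what your ``orchestration'' step would have to supply; Kadison transitivity alone cannot. Without this lemma, or a working substitute for it, the argument has a genuine gap.
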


Note that this shows that pure states $\phi$ are completely determined by the positive norm $1$ elements on which they take the value $1$, as given by the formula $\phi(S)=\inf_{T\in A^1_+,\phi(T)=1}||TST||$, for $S\in A_+$.  For the Calkin algebra (or any other real rank zero C*-algebra for that matter \textendash\, see the the first paragraph of the proof below) $\phi$ will even be determined by just the projections on which it takes the value $1$, by the same formula $\phi(S)=\inf_{P\in\mathcal{P}(A),\phi(P)=1}||PSP||$, for $S\in A_+$.  In \cite{Bice2011}, we used this fact to obtain some new partial solutions to the long-standing Kadison-Singer conjecture, and then began to wonder if this $\inf$ was actually a $\min$, and more generally if we could eliminate the $\epsilon$ from the theorem above.  As pointed out to us by Mikael R\o rdam\footnote{who made another interesting comment regarding a modification of \autoref{ex}.  Specifically, if we eliminate the pure state and start with a non-central projection $Q$ then, for any $t\in[0,1]$, we can find a non-zero projection $P$ with $PQP=tP$, so long as our C*-algebra $A$ has property (SP) (which is significantly weaker than real rank zero, requiring only that all hereditary C*-subalgebras of $A$ contain a non-zero projection).}, this certainly can not be done, even in the real rank zero case, for arbitrary $S\in A^1_+$, as witnessed by the identity function in $A=C(X)$, where $X$ is the Cantor subset of $[0,1]$ (and $\phi$ is any pure state).  Originally, we felt that another counterexample for some projection $S$ could also be found in something like the Calkin algebra (we couldn't hope to find such an easy projection counterexample in a commutative C*-algebra as the $\epsilon$ can be trivially eliminated in this case by letting $T$ be $S$ or $S^\perp$).  Consequently, it came as a bit of a surprise to find the $\epsilon$ can be eliminated when $S$ is a projection, not just in the Calkin algebra but more generally in any C*-algebra of real rank zero, as we now show using the projection calculus.

Note that in this section, for $t\in[0,1]$, we define $P_{Q,R,t}=P_{Q,R,t\chi}$ where $\chi$ is the characteristic function of $(0,1]$ (or, strictly speaking, its restriction to $\sigma(PQ)$).

\begin{thm}\label{ex}
If $\phi$ is a pure state on a C*-algebra $A$ of real rank zero and $Q\in\mathcal{P}(A)$ then there exists $P\in\mathcal{P}(A)$ such that $\phi(P)=1$ and $PQP=\phi(Q)P$.
\end{thm}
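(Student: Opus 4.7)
The plan is to find a projection $R\in\mathcal{P}(A)$ for which the projection calculus of \S\ref{PFC} returns the desired $P$.  Setting $\lambda=\phi(Q)$ and disposing of the trivial cases $\lambda=0$ (take $P=Q^\perp$) and $\lambda=1$ (take $P=Q$), I will work in the GNS representation $\pi$ with cyclic unit vector $\xi$ and focus on $\lambda\in(0,1)$.  The geometric idea is that the vector $\eta=(Q-\lambda)\xi\in H_\phi$ is nonzero, orthogonal to $\xi$, with $||\eta||^2=\lambda(1-\lambda)$, and any $R\in\mathcal{P}(A)$ satisfying $R\xi=\xi$ (equivalently $\phi(R)=1$) together with $R\eta=0$ automatically has $RQR\xi=\lambda\xi$; that is, $\xi$ becomes an exact $\lambda$-eigenvector of $RQR$ in the GNS picture, which is precisely what is needed for the projection calculus to return a $P$ with $\phi(P)=1$.

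To construct such an $R$, I first invoke Kadison's classical transitivity theorem in the irreducible representation $\pi$, applied to the linearly independent pair $\xi,\eta$, to obtain a self-adjoint $T\in A$ with $\pi(T)\xi=\xi$ and $\pi(T)\eta=0$.  Real rank zero then supplies a projection $E\in\mathcal{P}(A)$ with $E^\perp_T(2/3)\leq E\leq E^\perp_T(1/3)$; since $\xi$ and $\eta$ are $1$- and $0$-eigenvectors of $\pi(T)$ respectively, this forces $\pi(E)\xi=\xi$ and $\pi(E)\eta=0$.  A second application of real rank zero, this time to the self-adjoint element $S=EQE$ (which satisfies $S\xi=\lambda\xi$), cuts $E$ down to $R\leq E$ in $\mathcal{P}(A)$ with $\sigma(RQR)\setminus\{0\}\subseteq[\delta,1-\delta]$ for some $0<\delta<\min(\lambda,1-\lambda)$.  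Since $\lambda$ lies in this interval $\xi$ survives the cut, so $\phi(R)=1$ and $R\eta=0$ are preserved, and now moreover $[RQR]=R$ while $1\notin\sigma(QR)$.

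Finally, choose a continuous $f$ on $\sigma(QR)\subseteq\{0\}\cup[\delta,1-\delta]$ with $f(0)=0$ and $f\equiv\lambda$ on $[\delta,1-\delta]$, and set $P=P_{Q,R,f}$.  The first required property is immediate: $f(RQR)=\lambda R$, so the identity $U^*QU=f(RQR)$ from \S\ref{PFC} yields $PQP=Uf(RQR)U^*=\lambda UU^*=\lambda P$.  For the second, expand $U^*\xi=x_f(RQR)RQ\xi+y_f(RQR)RQ^\perp\xi$; since $R\xi=\xi$ and $R\eta=0$ one computes $RQ\xi=\lambda\xi$ and $RQ^\perp\xi=(1-\lambda)\xi$, and since $\xi$ is a $\lambda$-eigenvector of $RQR$ with $x_f(\lambda)=y_f(\lambda)=1$, we obtain $U^*\xi=\lambda\xi+(1-\lambda)\xi=\xi$, so $\phi(P)=||U^*\xi||^2=1$.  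The principal obstacle is the very first step, namely engineering a single $R\in\mathcal{P}(A)$ that simultaneously achieves $\phi(R)=1$ and $R\eta=0$; this is where Kadison's classical transitivity is essential, and it is worth noting that only the classical form is needed, not the strengthened version that the paper will derive later from this very excision theorem.
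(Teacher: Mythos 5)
Your proof is correct, but it takes a genuinely different route from the paper's. The paper starts from the Akemann--Anderson--Pedersen excision result (\cite{AkemannAndersonPedersen1986} Proposition 2.2), which only yields projections $R_n$ with $\phi(R_n)=1$ and $||R_nQR_n-\phi(Q)R_n||\rightarrow0$; replacing $R_n$ by $P_{Q,R_n,\phi(Q)}$ restores the exact relation $R_nQR_n=\phi(Q)R_n$ but sacrifices exactness of the state value, so the paper must run an infinite corrective recursion (via \autoref{[]cont}, \autoref{veecont} and (\ref{split})) producing a Cauchy sequence of exactly-excising projections whose limit also satisfies $\phi(P)=1$. Your observation collapses that iteration: classical self-adjoint transitivity (\cite{Pedersen1979} Theorem 2.7.5) applied in the GNS representation to the orthogonal pair $\xi$, $\eta=(\pi_\phi(Q)-\lambda)\xi$, followed by spectral cuts from real rank zero, manufactures a single $R\in\mathcal{P}(A)$ for which $\xi$ is an \emph{exact} $\lambda$-eigenvector of $\pi_\phi(RQR)$ and $\sigma(RQR)\setminus\{0\}$ avoids neighbourhoods of $0$ and $1$; since $f\equiv\lambda$ fixes $\lambda$ and $x_f(\lambda)=y_f(\lambda)=1$, the isometry computation $U^*\xi=\xi$ gives $\phi(P)=1$ and $PQP=\lambda P$ in one application of the projection calculus, with no limiting argument, no use of excision, and no circularity with the strengthened transitivity proved later. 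Two points to tighten when writing this up: \autoref{rr0} as stated gives only one-sided cuts, so to bound $\sigma(RQR)$ away from $1$ you should take $R=R_1-R_2$ with $E^\perp_S(2\delta)\leq R_1\leq E^\perp_S(\delta)$ and $E^\perp_S(s'')\leq R_2\leq E^\perp_S(s')$ for suitable $\delta<\lambda<s'<s''<1$ (exactly as in the proof of \autoref{sigmapq}); then $R\leq R_1\leq[S]\leq E$ keeps $R\eta=0$ and $\phi(R)=1$, and $\delta R\leq RQR\leq s''R$ gives $[RQR]=R$ and $||QR||<1$. Also, the identity $U^*QU=f(RQR)$ is not recorded in \S\ref{PFC} (only $QPQ=f(QRQ)$ is), though it follows in one line from the formula for $U$. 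As for what each approach buys: yours is shorter and makes transparent exactly where the state is preserved, while the paper's argument avoids invoking transitivity and its recursive scheme is precisely what gets recycled in the proof of \autoref{pi}, where the recursion is started at a prescribed $R_0$.
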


\begin{proof}
By \cite{AkemannAndersonPedersen1986} Proposition 2.2, for any $\epsilon>0$ we have $R\in A^1_+$ such that $||RQR-\phi(Q)R||\leq\epsilon$ and $\phi(R)=1$.  As $A$ has real rank zero, we have $R'\in\mathcal{P}(A)$ such that $E^\perp_R(1-\epsilon)\leq R'\leq E^\perp_R(1-2\epsilon)$.  Thus $v_\phi\in\mathcal{R}(E^\perp_{\pi_\phi(R)}(1-\epsilon))\subseteq\mathcal{R}(\pi_\phi(R'))$ and hence $\phi(R')=1$.  Also $R'\leq E^\perp_R(1-2\epsilon)$ so $||(1-R)R'||\leq 2\epsilon$ and therefore $||R'QR'-\phi(Q)R'||\leq7\epsilon$ (alternatively note that, as $A$ has real rank zero, its hereditary subalgebras each contain an approximate unit of projections so, by the proof of \cite{AkemannAndersonPedersen1986} Proposition 2.2, we can actually choose $R\in\mathcal{P}(A)$ from the beginning).

So we may take $(R_n)\subseteq\mathcal{P}(A)$ such that $||R_nQR_n-\phi(Q)R_n||\rightarrow0$ and $\phi(R_n)=1$, for all $n\in\mathbb{N}$.  Furthermore, taking a positive sequence $(\epsilon_n)$ with $\epsilon_n\rightarrow0$ and replacing $R_{n+1}$ with $R'$ such that $E^\perp_{R_nR_{n+1}R_n}(1-\epsilon_n)\leq R'\leq E^\perp_{R_nR_{n+1}R_n}(1-2\epsilon_n)$, for all $n\in\mathbb{N}$, we may assume that $(R_n)$ is decreasing.  Finally, by replacing $R_n$ with $P_{Q,R_n,\phi(Q)}$, for each $n\in\mathbb{N}$, we instead have $R_nQR_n=\phi(Q)R_n$, for all $n\in\mathbb{N}$, $\phi(R_n)\rightarrow1$ and $||R_n^\perp R_{n+1}||\rightarrow0$.

By taking a subsequence if necessary, we may ensure that $||R^\perp_nR_{n+1}||<r_n$ for any positive sequence $(r_n)$ with $r_n\rightarrow0$.  For all $n\in\mathbb{N}$, let $P_n=R_n+P_{Q,S_{n-1},\phi(Q)}$ (with $S_{-1}=0$) and $S_n=[(R_{n+1}\vee[QR_{n+1}])^\perp(P_n-[P_nR_{n+1}])]$.  Thus, for all $n\in\mathbb{N}$, $S_{n-1}(R_n\vee[QR_n])=0$ and hence $(S_{n-1}\vee[QS_{n-1}])(R_n\vee[QR_n])=0$.  This means that $P_{Q,S_{n-1},\phi(Q)}(R_n\vee[QR_n])=0$ so $P_n$ is indeed a projection and $P_nQP_n=\phi(Q)P_n$, for all $n\in\mathbb{N}$.

By making $r_n$ is sufficiently small, we can make $||[P_nR_{n+1}]-R_{n+1}||=||P^\perp_nR_{n+1}||\leq||R^\perp_nR_{n+1}||$ as small as we like, by \autoref{[]cont}.  It follows we can make $||[QP_nR_{n+1}]-[P_nR_{n+1}]||$ and hence $||R_{n+1}\vee[QR_{n+1}]-[P_nR_{n+1}]\vee[QP_nR_{n+1}]||$ as small as we like too, by \autoref{veecont}.  As we have $([P_nR_{n+1}]\vee[QP_nR_{n+1}])(P_n-[P_nR_{n+1}])=0$, we can make $||S_n-(P_n-[P_nR_{n+1}])||$ as small as we like, again by \autoref{[]cont}.  As $P_nQP_n=\phi(Q)P_n$, we can therefore make $S_n-P_{Q,S_n,\phi(Q)}$ as small as we like.  As \[||P_n-P_{n+1}||\leq||[P_nR_{n+1}]-R_{n+1}||+||S_n-(P_n-[P_nR_{n+1}])||+||S_n-P_{Q,S_n,\phi(Q)}||,\] we can therefore make $||P_n-P_{n+1}||$ as small as we like.  Specifically, let us choose $(r_n)$ so that $||P_n-P_{n+1}||\leq2^n$, for all $n\in\mathbb{N}$.  This ensures that $(P_n)$ is Cauchy and has a limit $P\in\mathcal{P}(A)$.  As $P_nQP_n=\phi(Q)P_n$, for all $n\in\mathbb{N}$, $PQP=\phi(Q)P$, while $\phi(R_n)\leq\phi(P_n)\rightarrow1$ gives $\phi(P)=1$.
\end{proof}

We can now use this to strengthen Kadison's transitivity theorem in the real rank zero case.  There are a number of different ways of phrasing the original Kadison transitivity theorem, although the one we choose is the following.

\begin{thm}\label{KT}
If $\pi$ is an irreducible representation of a C*-algebra $A$ on a Hilbert space $H$ and $K$ is a finite dimensional subspace of $H$ then the map $T\mapsto\pi(T)|_K$ is onto $\mathcal{B}(K)$.
\end{thm}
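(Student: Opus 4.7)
The plan is the classical Kaplansky density plus geometric series argument. Since $\pi$ is irreducible, $\pi(A)' = \mathbb{C}1$, so by the double commutant theorem $\pi(A)'' = \mathcal{B}(H)$; Kaplansky density then guarantees that the unit ball of $\pi(A)$ is strongly dense in the unit ball of $\mathcal{B}(H)$. Because $\pi$ is a $*$-homomorphism of C*-algebras, every element of $\pi(A)$ lifts to an element of $A$ with the same norm, so the approximations below will automatically take place in $A$.

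Normalize so that $\|B\| \leq 1$, fix an orthonormal basis $e_1, \ldots, e_n$ of $K$, and fix $\delta \in (0,1)$. View $B$ as an element of $\mathcal{B}(H)$ via $v \mapsto B(P_Kv)$, where $P_K$ is the orthogonal projection onto $K$; this has the same norm. By Kaplansky density, pick $T_0 \in A$ with $\|T_0\| \leq 1$ and $\|\pi(T_0)e_j - Be_j\| < \delta/\sqrt{n}$ for every $j$. The Cauchy--Schwarz bound $\sum|c_j| \leq \sqrt{n}\,(\sum|c_j|^2)^{1/2}$ applied to basis expansions in $K$ then gives the operator norm estimate $\|R_0\|_{K \to H} \leq \delta$ for the residual $R_0 \colon K \to H$ defined by $R_0 e_j := Be_j - \pi(T_0)e_j$.

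Now iterate. Having produced $T_0, \ldots, T_k$ with residual $R_k \colon K \to H$ of operator norm at most $\delta^{k+1}$, extend $R_k$ to $\mathcal{B}(H)$ by composition with $P_K$ and apply Kaplansky density once more to obtain $T_{k+1} \in A$ with $\|T_{k+1}\| \leq \delta^{k+1}$ and $\|\pi(T_{k+1})e_j - R_ke_j\| < \delta^{k+2}/\sqrt{n}$ for each $j$. The new residual $R_{k+1}e_j := R_ke_j - \pi(T_{k+1})e_j$ then has operator norm at most $\delta^{k+2}$ by the same Cauchy--Schwarz estimate, completing the recursion. The series $T := \sum_{k \geq 0} T_k$ converges in norm to an element of $A$ with $\|T\| \leq 1/(1-\delta)$, and telescoping gives $\pi(T)e_j = Be_j$ for every $j$, hence $\pi(T)|_K = B$, as required.

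The sole delicate point is simultaneously controlling the norm of each $T_k$ and the residual on the basis vectors; this is what the Cauchy--Schwarz conversion between basis-wise approximation and operator norm achieves, crucially exploiting the finite dimensionality of $K$, with the resulting $\sqrt{n}$ factor harmlessly absorbed into the choice of $\delta$.
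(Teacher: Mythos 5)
Your argument is correct: it is the standard proof of Kadison's transitivity theorem via Kaplansky density (irreducibility gives $\pi(A)''=\mathcal{B}(H)$, the unit ball of $\pi(A)$ is strongly dense in that of $\mathcal{B}(H)$, and the geometric-series iteration converts simultaneous approximation on the finitely many basis vectors of $K$ into exact agreement on $K$). Note that the paper itself offers no proof of this statement; it quotes it as the classical transitivity theorem (citing Pedersen only for the self-adjoint refinement used in the next theorem), so there is nothing to compare beyond observing that yours is precisely the textbook argument the paper implicitly relies on. The only step you assert without justification is that elements of $\pi(A)$ lift to $A$ with the same norm; this is a true standard fact about C*-quotients (truncate a lift by continuous functional calculus), and in any case an approximate norm bound would suffice for your recursion, so it is not a gap.
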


In fact, using the self-adjoint version of the Kadison transitivity theorem (see \cite{Pedersen1979} Theorem 2.7.5), one can even strengthen this to the following.\footnote{Mikael R\o rdam has told us this idea goes back to Glimm.}

\begin{thm}\label{KTS}
If $\pi$ is an irreducible representation of a C*-algebra $A$ on a Hilbert space $H$ and $K$ is a finite dimensional subspace of $H$ then there exists a subalgebra $B$ of $A$ on which the map $T\mapsto\pi(T)|_K$ is a homomorphism onto $\mathcal{B}(K)$.
\end{thm}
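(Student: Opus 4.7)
The plan is to reduce the statement to the ordinary version of Kadison transitivity (Theorem~\ref{KT}) by producing lifts of the matrix units of $\mathcal{B}(K)$ and then letting $B$ be the subalgebra of $A$ they generate. Fix an orthonormal basis $k_1,\dots,k_n$ of $K$ and, as usual, write $e_{ij}\in\mathcal{B}(K)$ for the matrix units defined by $e_{ij}k_l=\delta_{jl}k_i$. For each pair $(i,j)$ I would apply Kadison transitivity to obtain $T_{ij}\in A$ with $\pi(T_{ij})|_K=e_{ij}$; if one wishes to use only the self-adjoint version (as the author advertises), apply it to the real and imaginary parts $\tfrac{1}{2}(e_{ij}+e_{ji})$ and $\tfrac{1}{2i}(e_{ij}-e_{ji})$ (extended by $0$ on $K^\perp$) to obtain self-adjoint lifts $R_{ij},I_{ij}\in A$, and set $T_{ij}=R_{ij}+iI_{ij}$.

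The key observation driving the whole argument is that the equality $\pi(T_{ij})|_K=e_{ij}$ means precisely that $\pi(T_{ij})\xi=e_{ij}\xi\in K$ for every $\xi\in K$, so $K$ is actually invariant (not merely mapped) under $\pi(T_{ij})$. Let $B$ be the (not necessarily unital or self-adjoint) subalgebra of $A$ generated by $\{T_{ij}:1\leq i,j\leq n\}$. Invariance of $K$ under each generator $\pi(T_{ij})$ passes to invariance of $K$ under $\pi(T)$ for every $T\in B$, so the restriction map $\Phi\colon B\to\mathcal{B}(K)$, $\Phi(T)=\pi(T)|_K$, is well defined with codomain $\mathcal{B}(K)$.

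Multiplicativity of $\Phi$ is then essentially automatic: for $S,T\in B$ and $\xi\in K$, the fact that $\pi(T)\xi\in K$ yields
\[\Phi(ST)\xi=\pi(S)\bigl(\pi(T)\xi\bigr)=\pi(S)|_K\bigl(\pi(T)|_K\xi\bigr)=\bigl(\Phi(S)\Phi(T)\bigr)\xi,\]
and surjectivity of $\Phi$ is immediate from $\Phi(T_{ij})=e_{ij}$ together with the fact that the $e_{ij}$ generate $\mathcal{B}(K)$ as an algebra. There is no real analytic obstacle to overcome; the essential conceptual point, and the only thing the argument uses beyond Kadison transitivity itself, is that once the generators of $B$ are chosen so that their $\pi$-images individually preserve $K$, restriction to $K$ is automatically an algebra homomorphism on everything they generate.
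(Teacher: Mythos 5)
Your proof is correct and is essentially the argument the paper intends: the paper offers no written proof of Theorem \ref{KTS}, merely remarking (with the idea attributed to Glimm) that it follows from the self-adjoint version of Kadison's transitivity theorem, and your lifting of the matrix units $e_{ij}$ together with the observation that invariance of $K$ under the generators' $\pi$-images makes the restriction map automatically multiplicative on the generated subalgebra is exactly the natural way to fill in that remark. Your aside about obtaining $T_{ij}=R_{ij}+iI_{ij}$ from self-adjoint lifts corresponds precisely to the paper's pointer to Pedersen's Theorem 2.7.5.
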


The following corollary of \autoref{ex} shows that we can strengthen this further to an isomorphism so long as $A$ has real rank zero.  An important point to note is that the subalgebra $B$ below may not be unital.  Indeed, as pointed out to us by Ilijas Farah, $A$ could be the CAR algebra (i.e. the $2^\infty$-UHF C*-algebra) which is simple and hence any irreducible representation must be on an infinite dimensional Hilbert space.  Thus $K$ could have dimension $3$, meaning $B\cong M_3$, even though the only unital finite (full) matrix subalgebras of $A$ are isomorphic to $M_{2^n}$, for some $n$.

\begin{cor}\label{pi}
If $\pi$ is an irreducible representation of a C*-algebra $A$ of real rank zero on a Hilbert space $H$ and $K$ is a finite dimensional subspace of $H$ then there exists a subalgebra $B$ of $A$ on which the map $T\mapsto\pi(T)|_K$ is an isomorphism onto $\mathcal{B}(K)$.
\end{cor}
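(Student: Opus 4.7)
My plan is to construct a system of C*-algebraic matrix units $\{e_{ij}\}_{i,j=1}^n\subseteq A$ (where $n=\dim K$) satisfying $\pi(e_{ij})|_K=|v_i\rangle\langle v_j|$ for some orthonormal basis $v_1,\ldots,v_n$ of $K$. The $*$-subalgebra $B$ they generate is then finite dimensional, isomorphic to $M_n\cong\mathcal{B}(K)$, and the restriction map $T\mapsto\pi(T)|_K$ on $B$ is the induced isomorphism sending $e_{ij}\mapsto|v_i\rangle\langle v_j|$.

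The main technical tool is a multi-projection strengthening of \autoref{ex}: for a pure state $\phi$ on a real rank zero C*-algebra $A$ and projections $Q_1,\ldots,Q_k\in A$, there is $P\in\mathcal{P}(A)$ with $\phi(P)=1$ and $PQ_iP=\phi(Q_i)P$ for all $i$. This follows by iterating \autoref{ex} in nested hereditary corners: given $P_i$ excising $\phi$ on $Q_1,\ldots,Q_i$, pass to $P_iAP_i$ (real rank zero, with $\phi$ still pure since $\phi(P_i)=1$) and apply \autoref{ex} to $P_iQ_{i+1}P_i$ to obtain $P_{i+1}\leq P_i$; the nesting preserves the earlier relations.

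Writing $\phi_i(T)=\langle\pi(T)v_i,v_i\rangle$, classical spectrum-preserving Kadison transitivity produces projections $Q_1,\ldots,Q_n\in A$ with $\pi(Q_i)|_K=|v_i\rangle\langle v_i|$, and $\phi_i(Q_j)=|\langle v_i,v_j\rangle|^2=\delta_{ij}$. Iterated application of the multi-projection excision lemma (excising $\phi_i$ simultaneously on the previously constructed $P_1,\ldots,P_{i-1}$ together with $Q_{i+1},\ldots,Q_n$) then produces pairwise orthogonal projections $P_1,\ldots,P_n\in A$ with $\pi(P_i)v_i=v_i$ and $\pi(P_i)v_j=\pi(P_i)\pi(Q_j)v_j=0$ for $j\neq i$, hence $\pi(P_i)|_K=|v_i\rangle\langle v_i|$. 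For the off-diagonals, take $T_i\in A$ from Kadison transitivity with $\pi(T_i)|_K=|v_i\rangle\langle v_1|$ and set $V_i=P_iT_iP_1\in P_iAP_1$; then $\pi(V_i^*V_i)|_K=|v_1\rangle\langle v_1|$ and $\phi_1(V_i^*V_i)=1$.

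Using real rank zero of the corner $P_1AP_1$, I would produce spectral projections $R_i$ of $V_i^*V_i$ with spectrum confined to $[1-\epsilon,1]$; these satisfy $\phi_1(R_i)=1$ since $v_1$ is an exact $1$-eigenvector of $\pi(V_i^*V_i)$. Applying the multi-projection excision lemma inside $P_1AP_1$ to $\{R_i\}_{i\geq 2}$ refines $P_1$ to a common subprojection (still denoted $P_1$) with $P_1\leq R_i$ for every $i$, ensuring $P_1V_i^*V_iP_1\geq(1-\epsilon)P_1$ is invertible in the corner. The quasi-inverse formalism of \S\ref{pre} then yields honest partial isometries $U_i=V_iP_1(P_1V_i^*V_iP_1)^{-1/2}\in A$ with $U_i^*U_i=P_1$ and $\pi(U_i)|_K=|v_i\rangle\langle v_1|$; setting $P_i:=U_iU_i^*$ (a subprojection of the old $P_i$) preserves $\pi(P_i)|_K=|v_i\rangle\langle v_i|$, since $\pi(P_i)v_i=v_i$ and the new $P_i$ is bounded above by the old one. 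Setting $U_1=P_1$ and $e_{ij}=U_iU_j^*$ then gives the matrix units: the relations $e_{ij}e_{kl}=\delta_{jk}e_{il}$ and $e_{ij}^*=e_{ji}$ follow from the partial isometry structure, while $\pi(e_{ij})|_K=|v_i\rangle\langle v_j|$ follows from $\pi(U_j)^*v_k=\delta_{jk}v_1$, using that $v_k\perp\mathrm{ran}(\pi(P_j))$ for $k\neq j$. The principal obstacle is precisely this simultaneity step, where a single refinement of $P_1$ must serve as the common initial projection for every $U_i$ at once, and approximate $V_i^*V_i$-identity relations must be converted to exact partial-isometry relations via the quasi-inverse construction.
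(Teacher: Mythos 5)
Your overall architecture (matrix units $e_{ij}=U_iU_j^*$ built from partial isometries with a common initial projection, coming from the diagonal vector states $\phi_i=\langle\pi(\cdot)v_i,v_i\rangle$) is genuinely different from the paper's route, which instead excises the superposition states given by $f_m=\tfrac{1}{\sqrt2}(e_m+e_{m+1})$ against a \emph{single} projection $Q_m$ at each stage, so that the exact half-angle relation $P_mQ_mP_m=\tfrac12P_m$ makes $U_m=2Q_mP_mU_{m+1}$ an exact partial isometry; orthogonality to earlier data is secured there by choosing the starting projection of the recursion in \autoref{ex} inside a suitable corner, never by excising several projections simultaneously. The problem is that your whole construction rests on the ``multi-projection excision lemma,'' and the justification you give for it does not work: passing to the corner $P_iAP_i$ and ``applying \autoref{ex} to $P_iQ_{i+1}P_i$'' is illegitimate because $P_iQ_{i+1}P_i$ is in general not a projection, and \autoref{ex} only applies to projections. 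Exact excision of a general positive element is false even in real rank zero, as the paper itself points out via R{\o}rdam's example of the identity function in $C(X)$ for $X$ the Cantor set, so the nested iteration cannot be repaired simply by quoting \autoref{ex}.

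Moreover, the specific instances you need are not consequences of anything in the paper: (a) a projection $P$ with $\phi_i(P)=1$ that is \emph{exactly} orthogonal to the finitely many projections $P_1,\ldots,P_{i-1},Q_{i+1},\ldots,Q_n$ (simultaneous excision at the value $0$, i.e.\ exact excision of the non-projection positive element $\sum P_j+\sum Q_j$ at $0$), and (b) the simultaneity step itself: given projections $R_2,\ldots,R_n$ with $\phi_1(R_i)=1$, a single projection $P$ with $P\leq R_i$ for all $i$ and still $\phi_1(P)=1$ (simultaneous excision at the value $1$). Real rank zero readily gives approximate versions of both (spectral projections of $\sum R_i^\perp$, say), but upgrading approximate to exact is precisely the delicate recursive content of \autoref{ex}, and neither that theorem nor your nesting argument delivers the simultaneous exact statements; infima of finitely many projections need not even exist in $A$, and it is not clear that $\phi_1$ attains the value $1$ on any projection in the hereditary subalgebra $\bigcap R_iAR_i$. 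The remainder of your argument (the quasi-inverse conversion of $V_iP_1$ into partial isometries, the matrix-unit relations, and the verification of $\pi(e_{ij})|_K=|v_i\rangle\langle v_j|$) is sound modulo this lemma, so to complete the proof you would either have to prove a common-subprojection excision result of comparable difficulty to \autoref{ex}, or restructure the construction, as the paper does, so that each stage requires only one exact excision against one projection.
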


\begin{proof}
Let $e_1,\ldots,e_n$ be an orthonormal basis for $K$.  For $m=1,\ldots,n-1$, define a pure state $\phi_m$ on $A$ by $\phi_m(T)=\langle \pi(T)f_m,f_m\rangle$, where $f_m=\frac{1}{\sqrt{2}}e_m+\frac{1}{\sqrt{2}}e_{m+1}$.  By Kadison's transitivity theorem, we have $I\in\mathcal{P}(A)$ with $K\subseteq\mathcal{R}(\pi(I))$.  Likewise, we have $S\in A^1_+$ with $\pi(S)e_1=e_1$ and $\pi(S)e_2=\ldots=\pi(S)e_n=0$.  As $A$ has real rank zero, we therefore have $Q_1\in\mathcal{P}(A)$ such that $E^\perp_{ISI}(2/3)\leq Q_1\leq E^\perp_{ISI}(1/3)$, so $Q_1\leq I$, $\pi(Q_1)e_1=e_1$ and $\pi(Q_1)e_2=\ldots=\pi(Q_1)e_n=0$.

Recursively define projections $P_1,\ldots,P_{n-1}$ and $Q_2,\ldots,Q_n$ in $A$ as follows.  Once $Q_m$ has been defined, let $S\in A^1_+$ satsify $\pi(S)e_m=e_m$, $\pi(S)e_{m+1}=e_{m+1}$ and $\pi(S)e_{m+2}=\ldots=\pi(S)e_n=0$.  Take $R\in\mathcal{P}(A)$ such that $E^\perp_{T^*T}(2/3)\leq R\leq E^\perp_{T^*T}(1/3)$, where $T=S(I-(Q_1+\ldots+Q_{m-1}))$, so $R\leq I$, $RQ_1=\ldots=RQ_{m-1}=0$, $\pi(R)e_{m+2}=\ldots=\pi(R)e_n=0$, $\pi(R)e_m=e_m$ and $\pi(R)e_{m+1}=e_{m+1}$.  Thus we may take $R_0=R$ in the proof of \autoref{ex} to get $P_m\in\mathcal{P}(A)$ such that $P_mQ_mP_m=\phi_m(Q_m)P_m=\frac{1}{2}P_m$, as well as $P_mQ_1=\ldots=P_mQ_{m-1}=0$ and $\pi(P_m)e_{m+2}=\ldots=\pi(P_m)e_n=0$.  Set $Q_{m+1}=[Q^\perp_mP_m](=2Q^\perp_mP_mQ^\perp_m)$ and continue the recursion until $Q_n$ is defined.

Let $U_n=Q_n$ and, for $m=1,\ldots,n-1$, let $U_m=2Q_mP_mU_{m+1}$, so $U_m$ is a partial isometry with $U^*_mU_m=Q_n$ and $U_mU^*_m\leq Q_m$.  Our construction ensures that $\pi(U_m)e_l=\delta_{l,n}e_m$ and $\pi(U^*_m)e_l=\delta_{l,m}e_n$, for $l,m=1,\ldots,n$.  Thus the map $T\mapsto\pi(T)|_K$ is an isomorphism on the algebra $B$ generated by $U_1,\ldots,U_n$.
\end{proof}

In fact, the above theorem can even be generalized to finite collections of irreducible representations, as shown below.

\begin{cor}\label{finpi}
If $\pi_1,\ldots,\pi_n$ are inequivalent irreducible representations of a C*-algebra $A$ of real rank zero on Hilbert spaces $H_1,\ldots,H_n$ with finite dimensional subspaces $K_1,\ldots,K_n$ respectively, then there exists a subalgebra $B$ of $A$ on which the map $T\mapsto\pi_1(T)|_{K_1}\oplus\ldots\oplus\pi_n(T)|_{K_n}$ is an isomorphism onto $\mathcal{B}(K_1)\oplus\ldots\oplus\mathcal{B}(K_n)$.
\end{cor}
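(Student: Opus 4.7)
The plan is to reduce \autoref{finpi} to a block-diagonal application of \autoref{pi}: I would first produce mutually orthogonal projections $E_1,\ldots,E_n\in\mathcal{P}(A)$ that single out $K_i$ in the representation $\pi_i$ and annihilate $K_j$ in $\pi_j$ for $j\neq i$, and then apply \autoref{pi} separately inside each corner $E_iAE_i$.

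For the first step I would invoke the self-adjoint extended Kadison transitivity theorem for pairwise inequivalent irreducible representations (which is classical and requires no real rank zero hypothesis) to obtain a self-adjoint $T\in A$ with $\pi_i(T)|_{K_i}=\lambda_i\cdot 1_{K_i}$ for fixed distinct scalars $\lambda_1,\ldots,\lambda_n$. Choosing continuous $f_i\colon\mathbb{R}\to[0,1]$ with pairwise disjoint supports and $f_i(\lambda_i)=1$, the positive elements $a_i=f_i(T)$ are pairwise orthogonal and satisfy $\pi_i(a_i)|_{K_i}=1_{K_i}$ and $\pi_j(a_i)|_{K_j}=0$ for $j\neq i$. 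Real rank zero (\autoref{rr0}) then supplies $E_i\in\mathcal{P}(A)$ with $E^\perp_{a_i}(2/3)\leq E_i\leq E^\perp_{a_i}(1/3)$. Each $E_i$ sits below the support projection of $a_i$, and since the $a_i$ have pairwise orthogonal supports the $E_i$ are automatically pairwise orthogonal; for $v\in K_i$ we have $a_iv=v$ so $E_iv=v$, while for $v\in K_j$ with $j\neq i$ we have $a_iv=0$ so $E_iv=0$, as required.

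Next, each corner $E_iAE_i$ is hereditary in $A$ and hence also of real rank zero, while $\pi_i|_{E_iAE_i}$ is an irreducible representation on the Hilbert space $\mathcal{R}(\pi_i(E_i))\supseteq K_i$. Applying \autoref{pi} to the triple $(E_iAE_i,\pi_i|_{E_iAE_i},K_i)$ produces a subalgebra $B_i\subseteq E_iAE_i$ on which $T\mapsto\pi_i(T)|_{K_i}$ is an isomorphism onto $\mathcal{B}(K_i)$. For any $T\in B_i$ we have $T=E_iTE_i$, and from $\pi_j(E_i)|_{K_j}=0$ ($j\neq i$) it follows immediately that $\pi_j(T)|_{K_j}=0$ for all such $j$.

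The sought-after subalgebra is then $B=B_1+\cdots+B_n$. The orthogonality $E_iE_j=0$ forces $B_iB_j=0$ for $i\neq j$, so $B$ is an internal direct sum of $*$-subalgebras and is itself a finite-dimensional $*$-subalgebra of $A$; the map $T\mapsto\bigoplus_i\pi_i(T)|_{K_i}$ decomposes blockwise on $B$ and restricts to an isomorphism onto $\bigoplus_i\mathcal{B}(K_i)$ (surjectivity being inherited from each $B_i$ and injectivity following from the blockwise decomposition). The only genuinely non-trivial step in the plan is the first one: both the extended transitivity theorem (to separate the $\pi_i$'s via a single self-adjoint element) and the real rank zero hypothesis (to pass from the positive $a_i$'s to honest orthogonal projections $E_i$ with the exact prescribed action) are essential there; once the $E_i$ are in hand the rest is a transparent corner-by-corner application of \autoref{pi}.
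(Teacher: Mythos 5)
Your argument is correct, and it reaches the same overall reduction as the paper---manufacture pairwise orthogonal projections in $A$ that act as the identity on $K_i$ under $\pi_i$ and annihilate $K_j$ under $\pi_j$ for $j\neq i$, then handle each block by the single-representation result---but the implementation differs at both steps. For the separating projections the paper does not use a single self-adjoint separator: it takes, by joint Kadison transitivity, a projection $I$ with $K_m\subseteq\mathcal{R}(\pi_m(I))$ for all $m$ and projections $J_m$ covering $K_m$ and killing $K_l$ for $l>m$, and then builds $I_1,\ldots,I_n$ recursively by applying the real rank zero spectral-projection trick to $T=J_m(I-(I_1+\ldots+I_{m-1}))$; your device of one self-adjoint element with distinct eigenvalues $\lambda_i$ on the $K_i$ together with disjointly supported bump functions $f_i$ achieves the same thing more symmetrically, and it makes the mutual orthogonality of the $E_i$ and the vanishing $\pi_j(E_i)|_{K_j}=0$ completely transparent. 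For the second step, the paper cannot quote \autoref{pi} as a black box, because its statement gives no control guaranteeing that the resulting subalgebra annihilates the other $K_l$; instead it re-runs the proof of \autoref{pi} for each $\pi_m$ starting from $I_m$ in place of $I$. Your corner trick sidesteps this: since $B_i\subseteq E_iAE_i$, every $T\in B_i$ equals $E_iTE_i$, so $\pi_j(T)|_{K_j}=0$ automatically, and \autoref{pi} can be applied verbatim---at the modest cost of invoking two standard facts the paper does not need, namely that hereditary subalgebras of real rank zero C*-algebras again have real rank zero, and that an irreducible representation restricts to an irreducible representation of a corner $EAE$ on $\mathcal{R}(\pi(E))$. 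In short, your proof is more modular (black-box use of \autoref{pi}) while the paper's stays self-contained within its own construction; both ultimately rest on the same classical joint transitivity theorem for finitely many inequivalent irreducible representations.
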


\begin{proof}
By Kadison's transitivity theorem, we have $I\in\mathcal{P}(A)$ with $K_m\subseteq\mathcal{R}(\pi_m(I))$, for $m=1,\ldots,n$, as well as $J_m\in\mathcal{P}(A)$ satisfying $K_m\subseteq\mathcal{R}(\pi_m(J_m))$ and $K_l\subseteq\mathcal{N}(\pi_m(J_m))$, for $m=1,\ldots n$ and $l=m+1,\ldots n$.  For $m=1,\ldots n$, let $I_m\in\mathcal{P}(A)$ satisfy $E^\perp_{T^*T}(2/3)\leq I_m\leq E^\perp_{T^*T}(1/3)$, where $T=J_m(I-(I_1+\ldots+I_{m-1}))$.  So $I_1,\ldots,I_n$ are pairwise orthogonal and $K_m\subseteq\mathcal{R}(\pi_m(I_m))$, for $m=1,\ldots,n$.  Thus we may proceed as in the proof of \autoref{pi} for each representation $\pi_m$, starting with $I_m$ in place of $I$.
\end{proof}

Irreducible representations on commutative algebras can be seen as points on the topological space defining the algebra, and hence \autoref{finpi} in the commutative case follows from the elementary fact that, given finitely many points in a zero dimensional Hausdorff space, there exist disjoint clopen subsets each containing precisely one of these points.  In fact, this completely characterizes zero dimensional spaces (as long as $X$ is locally compact), and so the following question naturally arises.

\begin{qst}
Does \autoref{finpi} completely characterize C*-algebras of real rank zero?
\end{qst}

\section{The Order On Projections in C*-Algebras of Real Rank Zero}

In this last section we continue some of the work done in \cite{Bice2009}, investigating order properties of the set of projections in C*-algebras of real rank zero, in particular looking at classical partial order concepts and examining their relation to certain quantum analogs.

First, recall that a partially ordered set is \emph{atomless} if every element has a strictly smaller lower bound.  For example, if $A$ is the Calkin algebra $\mathcal{C}(H)=\mathcal{B}(H)/\mathcal{K}(H)(=$ the collection of bounded operators on $H$ modulo the compact operators $\mathcal{K}(H))$ of any (infinite dimensional) Hilbert space $H$ then $\mathcal{P}(A)\backslash\{0\}$ is atomless, because every infinte dimensional subspace of $H$ contains another infinite dimensional subspace of infinite codimension.  The following lemma shows that if $A$ is a C*-algebra of real rank zero and $\mathcal{P}(A)\backslash\{0\}$ is atomless then a stronger quantum analog actually holds.
\begin{thm}\label{atomless}
Assume $A$ is a C*-algebra of real rank zero and $\mathcal{P}(A)\backslash\{0\}$ is atomless.  Then, for all $P,Q,R\in\mathcal{P}(A)\backslash\{0\}$, there exists $P',R'\in\mathcal{P}(A)\backslash\{0\}$ such that $P'\leq P$, $R'\leq R$ and $P'QR'=0$.
\end{thm}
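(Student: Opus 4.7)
The plan is to proceed by cases based on the support structure of $Y := PQRQP$ in the corner $PAP$ and $\tilde Y := RQPQR$ in the corner $RAR$.  If $[Y] \neq P$, set $P' := P - [Y]$ and $R' := R$.  Then $P'YP' = 0$ yields $(P'QR)(P'QR)^* = P'QRQP' = 0$, so $P'QR' = P'QR = 0$ as required.  The symmetric case $[\tilde Y] \neq R$ is handled analogously by swapping the roles.  In what follows I assume both $Y \geq \epsilon P$ and $\tilde Y \geq \delta R$ for some $\epsilon, \delta > 0$, so $PQR$ is well-supported and its polar decomposition $PQR = U|PQR|$ gives a partial isometry $U \in A$ with $U^*U = R$, $UU^* = P$, and (as a direct computation shows) $U \tilde Y U^* = Y$.

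Suppose first that $Y = \lambda P$ is a scalar multiple of $P$.  Then also $\tilde Y = \lambda R$ and $U = \lambda^{-1/2}PQR$.  Atomlessness provides nonzero orthogonal $P_1, P_2 \in \mathcal{P}(A)$ with $P_1 + P_2 = P$; set $R_i := U^*P_iU = \lambda^{-1}RQP_iQR \in \mathcal{P}(A)$, both nonzero subprojections of $R$.  The key identity
\[
P_1QR_2 \;=\; \lambda^{-1}P_1(QRQ)P_2(QR) \;=\; \lambda^{-1}P_1(PQRQP)P_2(QR) \;=\; \lambda^{-1}(\lambda P_1P_2)(QR) \;=\; 0,
\]
using $P_1, P_2 \leq P$ and $P_1P_2 = 0$, supplies the required pair $P' := P_1$, $R' := R_2$.

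The non-scalar case I reduce to the scalar one via spectral decomposition.  If $\sigma_{PAP}(Y)$ has an isolated point $\lambda > 0$, take $P_1 := E_Y(\{\lambda\}) \in C^*(Y) \subseteq A$: a nonzero subprojection of $P$, strictly smaller than $P$ (since $\sigma_{PAP}(Y)$ has other points), with $P_1YP_1 = \lambda P_1$.  Setting $R_1 := U^*P_1U$, the relation $U \tilde Y U^* = Y$ makes $R_1$ the corresponding spectral projection of $\tilde Y$ at $\lambda$; one then computes $RQP_1QR = \lambda R_1$ and $P_1QR = \sqrt{\lambda}\,UR_1 = P_1QR_1$.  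Hence $R_1 < R$ strictly, and $P' := P_1$, $R' := R - R_1$ satisfy $P'QR' = P_1QR - P_1QR_1 = 0$.

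The main obstacle is the remaining case where $\sigma_{PAP}(Y)$ is a perfect compact subset of $[\epsilon, 1]$.  My plan here is to combine real rank zero of $PAP$ (which produces projections in $A$ approximating spectral subintervals of $Y$ of arbitrarily small width) with atomlessness of $\mathcal{P}(A)$ (to keep each refinement nonzero) in an iterative construction, yielding a nonzero $P_\infty \leq P$ with $P_\infty Y P_\infty$ an exact scalar multiple of $P_\infty$; the scalar subcase argument then applies to $(P_\infty, Q, R)$.  The key technical delicacy is arranging the successive refinements to form a norm-Cauchy sequence, so that the limit exists in $A$ and remains nonzero.
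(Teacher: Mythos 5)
Your easy cases check out: when $Y:=PQRQP$ has a support projection strictly below $P$ (and symmetrically for $\tilde Y$), when $Y$ is an exact scalar multiple of $P$, and when $\sigma(Y)$ has an isolated nonzero point, the computations you give are correct. But the argument is incomplete exactly where the difficulty lies. First, a smaller gap: the opening dichotomy ``either $[Y]\neq P$ or $Y\geq\epsilon P$'' presupposes that $Y$ is well-supported; if $0$ is a non-isolated point of $\sigma(Y)$ then $[Y]$ does not exist in $A$, so the reduction to $Y\geq\epsilon P$ and $\tilde Y\geq\delta R$ is not justified. Second, and decisively: in the remaining case (no isolated points) you only offer a plan. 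Producing a nonzero $P_\infty\leq P$ with $P_\infty QRQP_\infty$ \emph{exactly} a scalar multiple of $P_\infty$ is a strong statement in its own right: real rank zero only yields projections squeezing the spectrum of $Y$ into arbitrarily short intervals, i.e.\ approximate scalars, and converting approximate into exact is precisely the content of results like \autoref{ex} (and of R\o rdam's remark there on property (SP)), which in this paper rest on the projection calculus of \S\ref{PFC} and a delicate Cauchy-sequence argument. Your sketch names the obstacle (making the refinements norm-Cauchy while keeping them nonzero) but supplies no mechanism for overcoming it; that step is the proof, not a technicality, so the proposal as it stands does not establish the theorem.

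For comparison, the paper's proof avoids all spectral case analysis. Using real rank zero and support projections one first reduces to the situation $\|P-Q\|<1$ and $\|Q^\perp R\|<1$ (replacing $P$ by a projection under a spectral projection of $PQP$ and $Q$ by $[QP]$, which is harmless since $Q(P-[QP])=0$). Atomlessness then gives \emph{any} nonzero $R'<R$, and one simply sets $P'=[(QP)^{-1}(Q-[QR'])]$: this is a nonzero projection below $P$ with $P'QR'=0$. No exact scalar structure of $PQRQP$ is ever needed, which is why the paper's argument is a few lines while your route forces you to prove an excision-type statement along the way.
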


\begin{proof}
Without loss of generality, we may assume that $||P-Q||<1$.  For if $PQ=0$, we are done, while if $||PQ||=\delta>0$ then, as $A$ has real rank zero, we may replace $P$ with $P'\in\mathcal{P}(A)$ such that $P'\geq E^\perp_{PQP}(\delta/2)$.  Then we may replace $Q$ with $[QP]$, because $Q(P-[QP])=0$ (see the comments before \autoref{PveeQ}).  Likewise, we may assume $||Q-R||<1$ (actually, we only need $||Q^\perp R||<1$).  As $\mathcal{P}(A)$ is atomless, we have $R'\in\mathcal{P}(A)\backslash\{0\}$ with $R'<R$.  Then $0<[QR']<Q$ and we may set $P'=[(QP)^{-1}(Q-[QR'])]$ so $0<P'<P$ and $P'QR'=0$.
\end{proof}

Next, recall that a partially ordered set is \emph{(downwards) $\sigma$-closed} if every decreasing sequence has a lower bound.  Again, if $A$ is the Calkin algebra, then $\mathcal{P}(A)\backslash\{0\}$ is $\sigma$-closed.  Indeed any decreasing sequence $(p_n)\subseteq\mathcal{P}(A)\backslash\{0\}$ can be lifted to decreasing $(P_n)\subseteq\mathcal{P}(\mathcal{B}(H))$.  We can then recursively construct a orthonormal sequence $(v_n)\subseteq H$ such that $v_n\in\mathcal{R}(P_n)$, for each $n$.  Letting $P$ be the projection onto $\overline{\mathrm{span}}(v_n)$, we see that $\pi(P)$ is a non-zero lower bound of $(p_n)$.  We also see that, again, if $A$ is a C*-algebra of real rank zero and $\mathcal{P}(A)\backslash\{0\}$ is $\sigma$-closed, then another stronger quantum analog actually holds.

\begin{lem}
If $A$ is a C*-algebra and $P_-,P_+,Q,R\in\mathcal{P}(A)$ satisfy $P_+P_-=0$, $R\leq P_++P_-$ and $||QP_-||<||QP_+||$ then
\begin{equation}\label{bigeq}
||P^\perp_+R||^2=||P_-R||^2\leq\frac{||QP_+||^2+||Q^\perp R||^2+||P_+Q^\perp P_-||-1}{||QP_+||^2-||QP_-||^2}
\end{equation}
\end{lem}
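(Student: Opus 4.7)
The plan is to reduce the bound to an estimate on unit vectors in $\mathcal{R}(R)$, squeezed between a lower bound (coming from $v\in\mathcal{R}(R)$) and an upper bound obtained by decomposing along $P_\pm$. First, the equality $\|P^\perp_+ R\|=\|P_- R\|$ is immediate: since $P_+P_-=0$ and $R\leq P_++P_-$, we have $R=(P_++P_-)R$, so $P^\perp_+ R=(1-P_+)(P_++P_-)R=P_- R$.

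For the main inequality, set $s=\|P_- R\|$ and pick unit vectors $v_n\in\mathcal{R}(R)$ with $s_n^2:=\|P_- v_n\|^2\to s^2$; then $\|P_+ v_n\|^2=1-s_n^2$, because $v_n=(P_++P_-)v_n$ with orthogonal summands. On one hand, $v_n\in\mathcal{R}(R)$ forces $\|Qv_n\|^2=1-\|Q^\perp v_n\|^2\geq 1-\|Q^\perp R\|^2$. On the other, expanding $\|Qv_n\|^2=\|QP_+v_n+QP_-v_n\|^2$ produces diagonal terms bounded by $\|QP_\pm\|^2\|P_\pm v_n\|^2$ and a cross term equal to $\langle v_n,Tv_n\rangle$, where $T:=P_+QP_-+P_-QP_+$ is self-adjoint.

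The pivotal step is to bound $\|T\|$ sharply. Using $P_+P_-=0=P_-P_+$, six of the eight products expanding $T^2$ vanish, leaving
\[
T^2=P_+QP_-QP_++P_-QP_+QP_-.
\]
The two summands are positive with orthogonal supports (the first lies under $P_+$, the second under $P_-$), so $\|T\|^2=\|T^2\|=\|P_+QP_-\|^2$; and $P_+P_-=0$ also forces $P_+QP_-=-P_+Q^\perp P_-$, whence $\|T\|=\|P_+Q^\perp P_-\|$ with no spurious factor of $2$. Combining the bounds,
\[
1-\|Q^\perp R\|^2\leq\|Qv_n\|^2\leq\|QP_+\|^2(1-s_n^2)+\|QP_-\|^2 s_n^2+\|P_+Q^\perp P_-\|.
\]
Letting $n\to\infty$ and rearranging — the inequality flips direction because the hypothesis $\|QP_-\|<\|QP_+\|$ makes $\|QP_-\|^2-\|QP_+\|^2$ strictly negative — delivers exactly the claimed bound on $s^2=\|P_- R\|^2$.

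The one genuine obstacle is precisely this sharp cross-term estimate: bounding $|\langle v_n,P_+QP_- v_n\rangle|$ naively by $\|P_+QP_-\|$ and then inserting the polarisation factor would land an extra $2$ in front of $\|P_+Q^\perp P_-\|$ and break the intended numerator. The $T^2$ computation, which exploits that $T$ is a self-adjoint symmetrisation supported on $\mathcal{R}(P_+)\oplus\mathcal{R}(P_-)$ with cleanly orthogonal blocks, is the single trick that makes everything fit.
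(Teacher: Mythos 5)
Your proof is correct and follows essentially the same route as the paper's: both reduce to unit vectors $v\in\mathcal{R}(R)$, decompose $v$ along $P_+$ and $P_-$, bound the diagonal terms by $||QP_\pm||^2$ and the cross term by $||P_+Q^\perp P_-||$, and rearrange using $||QP_+||^2-||QP_-||^2>0$. The only differences are cosmetic: you estimate $||Qv||^2$ from above instead of $||Q^\perp v||^2$ from below, and your $T^2$ computation simply makes explicit the sharp cross-term bound that the paper asserts (which can also be obtained from $2||v_+||\,||v_-||\leq||v_+||^2+||v_-||^2=1$).
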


\begin{proof}
Assume $A$ is faithfully represented on a Hilbert space $H$.  For each unit vector $v\in\mathcal{R}(R)$, letting $v_+=P_+v$ and $v_-=P_-v$ we have
\begin{eqnarray*}
||Q^\perp R||^2 &\geq& ||Q^\perp v||^2\\
&=& ||Q^\perp v_+||^2+||Q^\perp v_-||^2-2\Re\langle Q^\perp v_+,v_-\rangle\\
&\geq& (1-||QP_+||^2)(1-||v_-||^2)+(1-||QP_-||^2)||v_-||^2-||P_+Q^\perp P_-||.
\end{eqnarray*}
Thus $(||QP_+||^2-||QP_-||^2)||v_-||^2\leq||QP_+||^2+||Q^\perp R||^2+||P_+Q^\perp P_-||-1$, from which (\ref{bigeq}) immediately follows.
\end{proof}

\begin{thm}\label{oprop}
Assume $A$ is a C*-algebra of real rank zero, $\mathcal{P}(A)\backslash\{0\}$ is $\sigma$-closed and $Q\in\mathcal{P}(A)$.  Then any decreasing $(P_n)\subseteq\mathcal{P}(A)\backslash\{0\}$ has a lower bound $P\in\mathcal{P}(A)\backslash\{0\}$ with $PQP=\lambda P$, where $\lambda=\inf||P_nQ||^2$.
\end{thm}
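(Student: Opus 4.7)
Plan. The strategy mirrors the proof of \autoref{ex}: build a Cauchy sequence of exact $\lambda$-eigenprojections for $Q$ whose norm limit is the desired $P$. Let $\lambda_n = \|P_nQ\|^2$, so $\lambda_n \downarrow \lambda$. The cases $\lambda = 0$ (take any lower bound from $\sigma$-closure, which automatically has $PQ = 0$) and $\lambda = 1$ (dualize with $Q^\perp$) are handled separately, so I focus on $0 < \lambda < 1$ and fix $\epsilon_n \downarrow 0$ rapidly.

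First, for each large $n$, use real rank zero to choose a projection $E_n$ with $E^\perp_{P_nQP_n}(\lambda - \epsilon_n/2) \leq E_n \leq E^\perp_{P_nQP_n}(\lambda - \epsilon_n)$. Since $\lambda_n \geq \lambda > \lambda - \epsilon_n/2$, this $E_n$ is non-zero and satisfies $E_n \leq P_n$ with $\sigma(E_nQE_n) \subseteq [\lambda - \epsilon_n, \lambda_n]$. Applying the projection calculus with a function $f_n$ collapsing this interval to $\lambda$ (and fixing $0$) yields $R_n = P_{Q, E_n, f_n}$ with $R_nQR_n = \lambda R_n$ exactly; by (\ref{||P-R||}) the distance $\|R_n - E_n\|$, and hence $\|R_nP_n^\perp\|$, tends to $0$.

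To assemble the $R_n$ into a coherent sequence, I borrow the correction step from \autoref{ex}: put $\tilde R_0 = R_0$ and recursively
\[S_n = [(R_{n+1} \vee [QR_{n+1}])^\perp(\tilde R_n - [\tilde R_nR_{n+1}])], \qquad \tilde R_{n+1} = R_{n+1} + P_{Q, S_n, \lambda}.\]
The orthogonality $S_n(R_{n+1} \vee [QR_{n+1}]) = 0$ forces $\tilde R_{n+1} \in \mathcal{P}(A)$ with $\tilde R_{n+1}Q\tilde R_{n+1} = \lambda \tilde R_{n+1}$. Using \autoref{[]cont}, \autoref{veecont} and the distance formula of the projection calculus, choosing $\epsilon_n$ small enough makes $\|\tilde R_n - \tilde R_{n+1}\|$ summable (say bounded by $2^{-n}$), so $\tilde R_n \to P$ in norm with $P \in \mathcal{P}(A)$ and $PQP = \lambda P$; the bound $\|P - \tilde R_n\| < 1$ for large $n$ makes $P$ Murray--von Neumann equivalent to $\tilde R_n$, hence non-zero. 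Finally, for $m \geq n$ we have $P_n^\perp \leq P_m^\perp$, so $\|\tilde R_m P_n^\perp\| \leq \|\tilde R_m P_m^\perp\| \to 0$, whence $\|PP_n^\perp\| = 0$ and $P \leq P_n$.

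The main obstacle is the Cauchy estimate: bounding $\|\tilde R_n - \tilde R_{n+1}\|$ demands propagating small errors through three layers --- the spectral approximation defining $E_n$, the projection-calculus shift defining $R_n$, and the correction step defining $\tilde R_n$. The lemma (\ref{bigeq}) immediately preceding the theorem is the natural tool for controlling the $\|P_-R\|^2$-type quantities that arise in comparing $\tilde R_n$ with $R_{n+1}$. The $\sigma$-closedness hypothesis underpins the scheme at the point of passing to a non-trivial limit; an alternative formulation would build $(\tilde R_n)$ strictly decreasing and invoke $\sigma$-closure directly to obtain the non-zero lower bound $P$.
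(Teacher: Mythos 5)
Your construction of the exact eigenprojections $R_n$ is fine, but the assembly step is the wrong mechanism for this theorem, and it is exactly where the proof breaks. The correction scheme borrowed from \autoref{ex} is designed to \emph{accumulate}: $\tilde R_{n+1}=R_{n+1}+P_{Q,S_n,\lambda}$ deliberately retains (a rotated copy of) the part of $\tilde R_n$ that is not under $R_{n+1}$, which is appropriate in \autoref{ex}, where one only needs $\phi(P)=1$, but fatal here, where the limit must lie below \emph{every} $P_n$. Concretely, your claim $\|\tilde R_mP_m^\perp\|\to0$ is false: take $P_n$ strictly decreasing with $P_nQP_n=\lambda P_n$ exactly (for instance $P_n$ the projection onto $\overline{\mathrm{span}}\{e_k\otimes(1,0):k\geq n\}$ and $Q$ the projection onto $\overline{\mathrm{span}}\{e_k\otimes(\sqrt{\lambda},\sqrt{1-\lambda}):k\geq0\}$ in $\ell^2\otimes\mathbb{C}^2$); then the spectral squeeze forces $E_n=R_n=P_n$, the recursion gives $S_n=\tilde R_n-[\tilde R_nR_{n+1}]$ exactly and $\tilde R_n=P_0$ for all $n$, so your limit is $P_0$, which satisfies $P_0QP_0=\lambda P_0$ but is not a lower bound of $(P_n)$, and $\|\tilde R_mP_m^\perp\|=1$ for all $m\geq1$. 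Relatedly, your argument never actually uses the $\sigma$-closedness hypothesis in any load-bearing way (non-vanishing of the limit already follows from $\|\tilde R_n\|=1$), which is a warning sign, since without that hypothesis the conclusion fails even for $\lambda=0$.

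The paper handles precisely this point differently. After producing approximate eigenprojections $R_n\leq P_n$ with $\|R_n^\perp R_{n+1}\|$ summably small (via real rank zero together with (\ref{||(P-R)QR||}) and (\ref{bigeq})), it does not glue them by adding corrections; it forms the products $T_n=R_1R_2\cdots R_n$, which are well-supported, applies $\sigma$-closedness to the decreasing supports $[T_n]$ to obtain a non-zero projection $R$ below all of them, and pulls $R$ back through the quasi-inverses, $S_n=[T_n^{-1}R]$. Each $S_n$ is non-zero and sits inside $\mathcal{R}(T_n^*)\subseteq\mathcal{R}(R_n)\subseteq\mathcal{R}(P_n)$, the relation $S_n=[R_nS_{n+1}]$ together with (\ref{||Q-[PQ]||}) makes $(S_n)$ Cauchy, and the limit $P$ is then automatically a non-zero lower bound with $PQP=\lambda P$; note also that exactness of the eigenrelation along the way, and hence the projection calculus, is not needed, since $\|R_nQR_n-\lambda R_n\|\to0$ suffices. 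To repair your approach you would have to replace the accumulating correction by a descending device of this kind; your closing suggestion to build $(\tilde R_n)$ decreasing and invoke $\sigma$-closure directly skips the actual difficulty, because the lower bound supplied by $\sigma$-closedness comes with no control whatsoever on $PQP$.
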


\begin{proof}
First note that if $\lambda=0$ the theorem is immediate, for then any lower bound $P$ of $(P_n)$ will satisfy $PQP=\lambda P$.  So we may assume $\lambda>0$.  Also, for any $(\epsilon_n)\subseteq\mathbb{R}_+$ decreasing to $0$, we may replace $(P_n)$ with a subsequence so that $||P_nQ||^2\leq\lambda+\epsilon_n$.  Furthermore, for any $(s_n)\subseteq\mathbb{R}_+$ increasing to $\lambda$ and $(\delta_n)\subseteq\mathbb{R}_+$ decreasing to $0$ (with $\delta_n\in(0,s_n)$, for each $n$), we have $(R_n)\subseteq\mathcal{P}(A)$ such that $E^\perp_{P_nQP_n}(s_n+\delta_n)\leq R_n\leq E^\perp_{P_nQP_n}(s_n-\delta_n)$, for all $n$.  From (\ref{||(P-R)QR||}) and (\ref{bigeq}), it follows that \[||R_n^\perp R_{n+1}||\leq\frac{\lambda+\epsilon_n-(s_{n+1}-\delta_{n+1})+2\delta_n}{\lambda-(s_n+\delta_n)}.\]  From this inequality it should be clear that, by choosing $(\epsilon_n)$, $(s_n)$ and $\delta_n$ appropriately, we can make $||R_n^\perp R_{n+1}||$ as small as we like, say, less than $1/2^n$.  In particular, this will ensure that, when we define $(T_n)\subseteq A$ recursively by $T_{n+1}=T_nR_{n+1}$ (and $T_1=R_1$), that $T_n$ is well-supported, for all $n$.  Thus $([T_n])\subseteq\mathcal{P}(A)\backslash\{0\}$ is decreasing and thus has a lower bound $R\in\mathcal{P}(A)$.  For all $n$, set $S_n=[T_n^{-1}R]$ and note that $S_n=[R_nS_{n+1}]$ and hence $||S_{n+1}-S_n||=||R_n^\perp S_{n+1}||\leq||R_n^\perp R_{n+1}||$ (see (\ref{||Q-[PQ]||})).  Thus $(S_n)$ is a Cauchy sequence and has a limit $P\in\mathcal{P}(A)\backslash\{0\}$.  For all $n<m$ we see that $||P^\perp_nP||\leq||R^\perp_m P||\rightarrow0$, so $P$ is indeed a lower bound of $(P_n)$.  But, as $||R_nQR_n-\lambda R_n||\rightarrow0$, we also have $PQP=\lambda P$.
\end{proof}

In particular, for any C*-algebra $A$ of real rank zero, $\mathcal{P}(A)\backslash\{0\}$ will be $\sigma$-closed if and only if $A$ has the \emph{(downwards) $\omega$-property}, as given in \cite{Bice2009} Definition 3.9,\footnote{Actually, this definition says that, given $Q\in\mathcal{P}(A)$ and decreasing $(P_n)\subseteq\mathcal{P}(A)$, if every lower bound of $(P_n)$ is a lower bound of $Q$ then we necessarily have $||Q^\perp P_n||\rightarrow0$.  So this version is equivalent to the version given here with $Q^\perp$ in place of $Q$.  The theorem and proof of \autoref{oprop} hold for $Q^\perp$ in place of $Q$ too and, in any case, the two versions are equivalent when $A$ is a unital C*-algebra.} i.e. given $Q\in\mathcal{P}(A)$, any decreasing $(P_n)\subseteq\mathcal{P}(A)$ with $\inf||P_nQ||>0$ has a lower bound $P\in\mathcal{P}(A)$ with $||PQ||>0$ (the forwards implication follows from \autoref{oprop}, while the reverse implication is immediate, even without the real rank zero assumption).  This $\omega$-property was used in \cite{Bice2009} to prove that the Calkin algebra contains no non-trivial countable gaps, even non-linear or non-commutative ones.  In the particular case of the Calkin algebra, it can be easily verified directly (i.e. without recourse to \autoref{oprop}) that $A$ has the $\omega$-property, in essentially the same way as $\mathcal{P}(A)\backslash\{0\}$ is shown to be $\sigma$-closed (see \cite{Bice2009} Theorem 3.10).  However, we began to wonder if these results could be generalized, and \autoref{oprop} shows that indeed they apply more generally to real rank zero C*-algebras $A$ with $\mathcal{P}(A)\backslash\{0\}$ $\sigma$-closed.\footnote{To be honest, though, the term `more generally' is perhaps not justified, as we do not know of any such C*-algebras that can not be obtained in some elementary way from the Calkin algebra itself.}

If $A$ is a unital C*-algebra then $\mathcal{P}(A)\backslash\{0\}$ will be downwards $\sigma$-closed if and only if $\mathcal{P}(A)\backslash\{1\}$ is upwards $\sigma$-closed, because the map $P\mapsto P^\perp$ is order inverting and takes $0$ to $1$.  On the other hand, what would naturally be considered as the \emph{upwards $\omega$-property}, i.e. given $Q\in\mathcal{P}(A)$, any increasing $(P_n)\subseteq\mathcal{P}(A)$ with $\sup||P_nQ||<1$ has an upper bound $P\in\mathcal{P}(A)$ with $||PQ||<1$, appears to be a fundamentally different property.  For one thing, all von Neumann algebras $A$ are immediately seen to have the upwards $\omega$-property, while they can only satisfy the downwards $\omega$-property vacuously.\footnote{For if a von Neumann algebra $A$ contains a strictly decreasing sequence $(P_n)$ of projections then this sequnce has a greatest lower bound $P$ and hence $(P_n-P)$ will be a decreasing sequence with no non-zero lower bound, i.e. $\mathcal{P}(A)\backslash\{0\}$ will not even be $\sigma$-closed.}  It also seems natural to conjecture that the upwards $\omega$-property is preserved under homomorphisms of C*-algebras of real rank zero, even though this is certainly not the case with the downwards $\omega$-property (see the discussion in \cite{Bice2009} after Definition 3.9).  One way of conceivably proving this would be to first strengthen \autoref{||PQ||}, i.e. to show that when $\pi$ is a C*-algebra homomorphism from $A$ to $B$ and $S,R,Q\in\mathcal{P}(A)$ with $||QR||<1$, $||QS||\leq||\pi(QR)||$ and $\pi(S)\leq\pi(R)$, there exists $P\in\mathcal{P}(A)$ with $\pi(P)=\pi(R)$, $||PQ||=||\pi(PQ)||$ and $S\leq P$.  However, we do not know if this holds or, indeed, if the Calkin algebra even has the upwards $\omega$-property.

The best we can do is show that this holds for the \emph{upwards $\lambda$-$\omega$-property}, for all $\lambda\in[0,1]$, i.e. given $Q\in\mathcal{P}(A)$, any increasing $(P_n)\subseteq\mathcal{P}(A)$ with $P_nQP_n=\lambda P_n$, for all $n$, has an upper bound $P\in\mathcal{P}(A)$ with $PQP=\lambda P$.\footnote{Incidentally, many C*-algebras that one encounters satisfy this property, although one that does not can be found in \cite{Akemann1970} Example I.2.}  For assume $A$ is a C*-algebra of real rank zero, $\pi$ is a homomorphism from $A$ onto $B$, $\lambda\in[0,1]$ and $q,(p_n)\subseteq\mathcal{P}(B)$ are such that $p_nqp_n=\lambda p_n$, for all $n$.  Then we can lift $q$ to $Q\in\mathcal{P}(A)$ and $(p_n)$ to $(P_n)\subseteq\mathcal{P}(A)$ such that $P_nQP_n=\lambda P_n$, for all $n$.  For say $Q$ and $P_1,\ldots,P_n$ have been defined and we want to define $P_{n+1}$.  As $P_nQP_n=\lambda P_n$, $P_nQ$ and $P_n^\perp Q$ are well-supported and hence $R=P_n\vee[QP_n]\in A$, and $(p_{n+1}-p_n)\pi(R)=0$ (by the comment after (\ref{split})) so we can find $P\in\mathcal{P}(A)$ with $\pi(P)=p_{n+1}-p_n$ and $PR=0$.  By the proof of \autoref{sigmapq} (and the comment after), we can then adjust $P$ so that in addition we have $PQP=\lambda P$.  Then simply let $P_{n+1}=P_n+P$ and continue the recursion.  From this it follows that if $A$ has the upwards $\lambda$-$\omega$-property  then so does $B$.

Lastly we show that a result proved in \cite{Bice2009} Theorem 6.2 for von Neumann algebras actually holds for all C*-algebras of real rank zero.  The real meaning is perhaps a little lost in its full generality, so for motivational purposes, say we have two closed subspaces $V$ and $W$ of a Hilbert space $H$.  Unless $V+W$ is itself closed, we could not expect to find a maximal closed subspace of $V+W$, as any proper closed subspace of $V+W$ will have a 1-dimensional extension in $V+W$, which is also necessarily closed.  However, it is still possible\footnote{In fact, this will happen precisely when $1$ is not a limit point of $\sigma_e(P_VP_W)$ (even though $V+W$ not being closed means that $1$ is a limit point of $\sigma(P_VP_W)$)} for $V+W$ to have a maximal closed subspace with respect to \emph{essential inclusion}, as given in \cite{Weaver2007} Definition 3.2.  Specifically we say a closed subspace $X$ is essentially included in a closed subspace $Y$ if $\pi(P_X)\leq\pi(P_Y)$, where $\pi$ is the canonical homomorphism to the Calkin algebra.  The surprising thing the following result tells us is that a closed subspace of $V+W$ that is maximal with respect to essential inclusion must in fact be a \emph{maximum} with respect to essential inclusion.

\begin{thm}
Assume $\pi$ is a homomorphism from a C*-algebra $A\subseteq\mathcal{B}(H)$ to $\mathcal{B}(H')$ and take $(P_n)\subseteq\mathcal{P}(A)$ and $P\in\mathcal{P}(A)$ with $\mathcal{R}(P)\subseteq\sum\mathcal{R}(P_n)$.  We have \ref{club1}$\Leftrightarrow$\ref{club2}$\Rightarrow$\ref{club3} where
\begin{enumerate}
\item\label{club1} $\pi(P)=\bigvee\pi(P_n)(=$ the projection onto $\overline{\sum\mathcal{R}(\pi(P_n))})$.
\item\label{club2} $\pi(P)\geq\pi(Q)$ whenever $Q\in\mathcal{P}(A)$ and $\mathcal{R}(Q)\subseteq\sum\mathcal{R}(P_n)$.
\item\label{club3} $\pi(P)=\pi(Q)$ whenever $Q\in\mathcal{P}(A)$ and $\mathcal{R}(P)\subseteq\mathcal{R}(Q)\subseteq\sum\mathcal{R}(P_n)$.
\end{enumerate}
If $A$ has real rank zero then we also have \ref{club3}$\Rightarrow$\ref{club2}, i.e. these statements are all equivalent.
\end{thm}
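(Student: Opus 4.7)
The implication (ii)$\Rightarrow$(iii) is immediate: $\mathcal{R}(P)\subseteq\mathcal{R}(Q)\subseteq\sum\mathcal{R}(P_n)$ gives $P\leq Q$, hence $\pi(P)\leq\pi(Q)$, which combined with (ii) yields $\pi(P)=\pi(Q)$.  For (i)$\Leftrightarrow$(ii), both directions reduce to the key claim that whenever $Q\in\mathcal{P}(A)$ with $\mathcal{R}(Q)\subseteq\sum\mathcal{R}(P_n)$, we have $\pi(Q)\leq\bigvee\pi(P_n)$ in $\mathcal{B}(H')$.  Given this claim, (i)$\Rightarrow$(ii) is immediate, and (ii)$\Rightarrow$(i) follows by combining the claim applied to $Q=P$ with the reverse inequality $\pi(P)\geq\bigvee\pi(P_n)$ obtained from (ii) with $Q=P_n$.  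I would prove the claim via a Baire category argument on the exhausting family of closed subspaces $\mathcal{R}(Q)\cap\overline{\mathcal{R}(P_1)+\cdots+\mathcal{R}(P_m)}$ of $\mathcal{R}(Q)$ --- producing $Q\leq P_1\vee\cdots\vee P_m$ in $\mathcal{B}(H)$ for some $m$ --- followed by a continuous functional calculus argument on $T_m=P_1+\cdots+P_m\in A$ to transfer this inequality to $\mathcal{B}(H')$ via $\pi$.

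The main content is (iii)$\Rightarrow$(ii) assuming real rank zero.  First I reformulate (iii) as: every $R'\in\mathcal{P}(A)$ with $R'\leq P^\perp$ and $\mathcal{R}(R')\subseteq\sum\mathcal{R}(P_n)$ satisfies $\pi(R')=0$ (just apply (iii) to $P+R'\in\mathcal{P}(A)$).  To show $\pi(Q)\leq\pi(P)$ for a given $Q\in\mathcal{P}(A)$ with $\mathcal{R}(Q)\subseteq\sum\mathcal{R}(P_n)$, it suffices to prove $\pi(QP^\perp Q)=0$, and for this it is enough to show $\|\pi(QP^\perp Q)\|\leq\epsilon$ for every $\epsilon>0$.

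Given such an $\epsilon$, real rank zero produces $R\in\mathcal{P}(A)$ with $E^\perp_{QP^\perp Q}(\epsilon)\leq R\leq E^\perp_{QP^\perp Q}(\epsilon/2)$.  Then $R\leq[QP^\perp Q]\leq Q$, so $\mathcal{R}(R)\subseteq\sum\mathcal{R}(P_n)$, and $RP^\perp R\geq(\epsilon/2)R$ makes $P^\perp R\in A$ well-supported with left support projection $[P^\perp R]\in\mathcal{P}(A)$ satisfying $[P^\perp R]\leq P^\perp$ and $\mathcal{R}([P^\perp R])=P^\perp\mathcal{R}(R)\subseteq\sum\mathcal{R}(P_n)$; the last containment holds because $\sum\mathcal{R}(P_n)$ is closed under $v\mapsto v-Pv$ whenever $\mathcal{R}(P)\subseteq\sum\mathcal{R}(P_n)$.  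The reformulated (iii) then forces $\pi([P^\perp R])=0$.

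The polar decomposition $P^\perp R=U|P^\perp R|$ yields a partial isometry $U\in A$ with $U^*U=R$ and $UU^*=[P^\perp R]$, so $\pi(U)\pi(U)^*=0$ forces $\pi(U)=0$ and hence $\pi(R)=\pi(U^*U)=0$.  Decomposing $QP^\perp Q$ through $1=R+R^\perp$ into four corners, the three involving $\pi(R)=0$ vanish under $\pi$, while $R^\perp(QP^\perp Q)R^\perp\leq\epsilon R^\perp$ (since $R^\perp\leq E_{QP^\perp Q}(\epsilon)$), giving $\|\pi(QP^\perp Q)\|\leq\epsilon$ as desired.  The principal obstacle is engineering $[P^\perp R]$ to lie both below $P^\perp$ and inside $\sum\mathcal{R}(P_n)$; this is exactly the role of the spectral sandwich afforded by real rank zero, which both makes $P^\perp R$ well-supported and keeps its range inside $\sum\mathcal{R}(P_n)$, enabling the Murray--von Neumann equivalence between $R$ and $[P^\perp R]$ used to conclude $\pi(R)=0$.
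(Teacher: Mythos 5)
Your reduction of (i)$\Leftrightarrow$(ii) to the claim that $\mathcal{R}(Q)\subseteq\sum\mathcal{R}(P_n)$ forces $\pi(Q)\leq\bigvee\pi(P_n)$ is fine, but your proposed proof of that claim contains a step that fails. Running Baire category over the closed subspaces $\mathcal{R}(Q)\cap\overline{\mathcal{R}(P_1)+\cdots+\mathcal{R}(P_m)}$ only yields $Q\leq P_1\vee\cdots\vee P_m$ in $\mathcal{B}(H)$, i.e.\ $Q\leq[T_m]$ for $T_m=P_1+\cdots+P_m$, and this relation involves the support projection of $T_m$, which is not captured by any continuous functional calculus expression in $T_m$ and is not preserved by $\pi$. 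Concretely, take $H=\bigoplus_n\mathbb{C}^2$, $P_1=\bigoplus_n\mathrm{diag}(1,0)$, $P_2=\bigoplus_n$ the projection onto $\mathbb{C}(\cos\theta_n,\sin\theta_n)$ with $\theta_n\to0$, $\theta_n\neq0$, $A=\mathcal{B}(H)$, $\pi$ the Calkin quotient and $Q=1$: then $Q\leq P_1\vee P_2=1$, yet $\pi(P_1)=\pi(P_2)$, so $\pi(P_1)\vee\pi(P_2)=\pi(P_1)\neq1=\pi(Q)$. (Of course here $\mathcal{R}(Q)\not\subseteq\mathcal{R}(P_1)+\mathcal{R}(P_2)$ --- exactly the information your passage to closures throws away.) What does transfer is the \emph{unclosed} operator-range containment: by the Fillmore--Williams results the paper cites (or by a Baire argument using the images of the closed balls under $(P_1+\cdots+P_m)^{1/2}$, which are weakly compact and hence norm closed, instead of the closed subspaces), one gets $\mathcal{R}(Q)\subseteq\mathcal{R}(P_1)+\cdots+\mathcal{R}(P_m)$ for some $m$, equivalently $Q\leq\lambda(P_1+\cdots+P_m)$ for some $\lambda>0$; applying $\pi$ to this inequality gives $\mathcal{R}(\pi(Q))\subseteq\mathcal{R}(\pi(P_1))+\cdots+\mathcal{R}(\pi(P_m))\subseteq\overline{\sum\mathcal{R}(\pi(P_n))}$, which is how the paper argues. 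So this part of your proposal needs repair, though the fix is close to what you wrote.

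Your treatment of (iii)$\Rightarrow$(ii) under real rank zero is correct, and it is genuinely different from the paper's. The paper argues by contraposition: assuming $\pi(P)\not\geq\pi(Q)$, it sandwiches $R$ between spectral projections of $QP^\perp Q$ at a level $\delta<||\pi(P^\perp Q)||^2/2$ and exhibits $P\vee R$ as a projection witnessing the failure of (iii). You instead argue directly: for each $\epsilon>0$ you produce $R\leq Q$ with $RP^\perp R\geq(\epsilon/2)R$, so $P^\perp R$ is well-supported, $[P^\perp R]\leq P^\perp$ lies in $A$ with range still inside $\sum\mathcal{R}(P_n)$, the reformulated (iii) (applied to $P+[P^\perp R]$) kills $\pi([P^\perp R])$, the polar-decomposition partial isometry of $P^\perp R$ (note $U^*U=[RP^\perp]=R$ since $RP^\perp R$ is invertible on $\mathcal{R}(R)$) transports this to $\pi(R)=0$, and the corner decomposition through $R+R^\perp$ gives $||\pi(QP^\perp Q)||\leq\epsilon$. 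All of these steps check out against the paper's preliminaries on well-supported elements. The paper's contrapositive route is shorter and avoids the Murray--von Neumann transfer; your direct route isolates how (iii) is used (only through projections of the form $P+R'$ with $R'\leq P^\perp$) and gives the quantitative mechanism explicitly. The (ii)$\Rightarrow$(iii) step is the same immediate observation as in the paper.
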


\begin{proof}
First note that $\mathcal{R}(P)\subseteq\sum\mathcal{R}(P_n)$ implies that we actually have $\mathcal{R}(P)\subseteq\mathcal{R}(P_1)+\ldots+\mathcal{R}(P_m)$ for some $m$.  This is equivalent to saying there exists $\lambda>0$ such that $P\leq\lambda(P_1+\ldots+P_m)$, by \cite{FillmoreWilliams1971} Theorems 2.1 and 2.2.  This, in turn, implies that $\pi(P)\leq\lambda(\pi(P_1)+\ldots+\pi(P_m))$ and hence $\mathcal{R}(\pi(P))\subseteq\mathcal{R}(\pi(P_1))+\ldots+\mathcal{R}(\pi(P_m))\subseteq\sum\mathcal{R}(\pi(P_n))$.  If \ref{club2} holds then, in particular, $\pi(P)\geq\pi(P_n)$, for all $n$, and hence $\sum\mathcal{R}(\pi(P_n))\subseteq\mathcal{R}(\pi(P))$, giving $\pi(P)=\bigvee\pi(P_n)$, i.e. \ref{club2}$\Rightarrow$\ref{club1}.  But the argument above applied to $Q$ instead of $P$ shows that $\mathcal{R}(Q)\subseteq\sum\mathcal{R}(P_n)$ implies $\mathcal{R}(\pi(Q))\subseteq\sum\mathcal{R}(\pi(P_n))$, giving \ref{club1}$\Rightarrow$\ref{club2}.

The \ref{club2}$\Rightarrow$\ref{club3} part is immediate, so assume $A$ has real rank zero and that \ref{club2} fails, i.e. $\pi(P)\ngeq\pi(Q)$ for some $Q\in\mathcal{P}(A)$ with $\mathcal{R}(Q)\subseteq\sum\mathcal{R}(P_n)$.  Picking $\delta\in(0,||\pi(P^\perp Q)||^2/2)$, we have $R\in\mathcal{P}(A)$ such that $E^\perp_{QP^\perp Q}(2\delta)\leq R\leq E^\perp_{QP^\perp Q}(\delta)\leq Q$.  Thus $||PR||\leq\sqrt{1-\delta}$ and hence $P\vee R\in\mathcal{P}(A)$ and $\mathcal{R}(P)\subseteq\mathcal{R}(P\vee R)\subseteq\mathcal{R}(P)+\mathcal{R}(Q)\subseteq\sum\mathcal{R}(P_n)$, even though $||\pi(P^\perp(P\vee R))||\geq||\pi(P^\perp R)||=||\pi(P^\perp Q)||>0$ and hence $\pi(P)\neq\pi(P\vee R)$.
\end{proof}

\bibliography{maths}{}

\begin{thebibliography}{10}

\bibitem{Akemann1970}
Charles~A. Akemann.
\newblock Left ideal structure of {$C\sp*$}-algebras.
\newblock {\em J. Functional Analysis}, 6:305--317, 1970.
\newblock \href {http://dx.doi.org/10.1016/0022-1236(70)90063-7}
  {\path{doi:10.1016/0022-1236(70)90063-7}}.

\bibitem{AkemannAndersonPedersen1986}
Charles~A. Akemann, Joel Anderson, and Gert~K. Pedersen.
\newblock Excising states of {$C^\ast$}-algebras.
\newblock {\em Canad. J. Math.}, 38(5):1239--1260, 1986.
\newblock \href {http://dx.doi.org/10.4153/CJM-1986-063-7}
  {\path{doi:10.4153/CJM-1986-063-7}}.

\bibitem{AndruchowCorachMbekhta2011}
E.~Andruchow, G.~Corach, and M.~Mbekhta.
\newblock Split partial isometries.
\newblock {\em Complex Anal. Oper. Theory}, pages 1--17, 2011.
\newblock \href {http://dx.doi.org/10.1007/s11785-011-0176-8}
  {\path{doi:10.1007/s11785-011-0176-8}}.

\bibitem{AnoussisKatavolosTodorov2007}
M.~Anoussis, A.~Katavolos, and I.~G. Todorov.
\newblock Angles in {$C^\ast$}-algebras.
\newblock \href {http://arxiv.org/abs/math.OA/0601003v2}
  {\path{arXiv:math.OA/0601003v2}}.

\bibitem{Ben-IsraelGreville2003}
Adi Ben-Israel and Thomas N.~E. Greville.
\newblock {\em Generalized inverses}.
\newblock CMS Books in Mathematics/Ouvrages de Math\'ematiques de la SMC, 15.
  Springer-Verlag, New York, second edition, 2003.
\newblock Theory and applications.

\bibitem{Bice2011}
Tristan Bice.
\newblock Filters in {C}*-algebras.
\newblock {\em Canad. J. Math.}, 2011.
\newblock \href {http://dx.doi.org/10.4153/CJM-2011-095-4}
  {\path{doi:10.4153/CJM-2011-095-4}}.

\bibitem{Bice2009}
Tristan Bice.
\newblock The order on projections in {$C^\ast$}-algebras of real rank zero.
\newblock {\em Bull. Polish Acad. Sci. Math.}, 60(1):37--58, 2012.
\newblock \href {http://dx.doi.org/10.4064/ba60-1-4}
  {\path{doi:10.4064/ba60-1-4}}.

\bibitem{Blackadar2006}
B.~Blackadar.
\newblock {\em Operator algebras}, volume 122 of {\em Encyclopaedia of
  Mathematical Sciences}.
\newblock Springer-Verlag, Berlin, 2006.
\newblock Theory of $C{^{*}}$-algebras and von Neumann algebras, Operator
  Algebras and Non-commutative Geometry, III.

\bibitem{Brown1991}
Lawrence~G. Brown.
\newblock Interpolation by projections in {$C^*$}-algebras of real rank zero.
\newblock {\em J. Operator Theory}, 26(2):383--387, 1991.
\newblock URL:
  \url{http://www.theta.ro/jot/archive/1991-026-002/1991-026-002-008.pdf}.

\bibitem{BrownPedersen1991}
Lawrence~G. Brown and Gert~K. Pedersen.
\newblock {$C^*$}-algebras of real rank zero.
\newblock {\em J. Funct. Anal.}, 99(1):131--149, 1991.
\newblock \href {http://dx.doi.org/10.1016/0022-1236(91)90056-B}
  {\path{doi:10.1016/0022-1236(91)90056-B}}.

\bibitem{BrownOzawa2008}
Nathanial~P. Brown and Narutaka Ozawa.
\newblock {\em {$C^*$}-algebras and finite-dimensional approximations},
  volume~88 of {\em Graduate Studies in Mathematics}.
\newblock American Mathematical Society, Providence, RI, 2008.

\bibitem{CorachMaestripieri2010}
G.~Corach and A.~Maestripieri.
\newblock Polar decomposition of oblique projections.
\newblock {\em Linear Algebra Appl.}, 433(3):511--519, 2010.
\newblock \href {http://dx.doi.org/10.1016/j.laa.2010.03.016}
  {\path{doi:10.1016/j.laa.2010.03.016}}.

\bibitem{CorachMaestripieri2011}
G.~Corach and A.~Maestripieri.
\newblock Products of orthogonal projections and polar decompositions.
\newblock {\em Linear Algebra Appl.}, 434(6):1594--1609, 2011.
\newblock \href {http://dx.doi.org/10.1016/j.laa.2010.11.033}
  {\path{doi:10.1016/j.laa.2010.11.033}}.

\bibitem{FillmoreWilliams1971}
P.~A. Fillmore and J.~P. Williams.
\newblock On operator ranges.
\newblock {\em Advances in Math.}, 7:254--281, 1971.
\newblock \href {http://dx.doi.org/10.1016/S0001-8708(71)80006-3}
  {\path{doi:10.1016/S0001-8708(71)80006-3}}.

\bibitem{Greville1974}
T.~N.~E. Greville.
\newblock Solutions of the matrix equation {$XAX=X$}, and relations between
  oblique and orthogonal projectors.
\newblock {\em SIAM J. Appl. Math.}, 26:828--832, 1974.
\newblock \href {http://dx.doi.org/10.1137/0126074}
  {\path{doi:10.1137/0126074}}.

\bibitem{KolihaRakocevic2004}
J.~J. Koliha and V.~Rako{\v{c}}evi{\'c}.
\newblock On the norm of idempotents in {$C^*$}-algebras.
\newblock {\em Rocky Mountain J. Math.}, 34(2):685--697, 2004.
\newblock \href {http://dx.doi.org/10.1216/rmjm/1181069874}
  {\path{doi:10.1216/rmjm/1181069874}}.

\bibitem{Loring1997}
Terry~A. Loring.
\newblock {\em Lifting solutions to perturbing problems in {$C^*$}-algebras},
  volume~8 of {\em Fields Institute Monographs}.
\newblock American Mathematical Society, Providence, RI, 1997.

\bibitem{Pedersen1968}
Gert~K. Pedersen.
\newblock Measure theory for {$C^{\ast} $} algebras. {II}.
\newblock {\em Math. Scand.}, 22:63--74, 1968.
\newblock URL: \url{http://www.mscand.dk/article.php?id=1861}.

\bibitem{Pedersen1979}
Gert~K. Pedersen.
\newblock {\em {$C^{\ast} $}-algebras and their automorphism groups}, volume~14
  of {\em London Mathematical Society Monographs}.
\newblock Academic Press Inc. [Harcourt Brace Jovanovich Publishers], London,
  1979.

\bibitem{Penrose1955}
R.~Penrose.
\newblock A generalized inverse for matrices.
\newblock {\em Proc. Cambridge Philos. Soc.}, 51:406--413, 1955.
\newblock \href {http://dx.doi.org/10.1017/S0305004100030401}
  {\path{doi:10.1017/S0305004100030401}}.

\bibitem{RaeburnSinclair1989}
Iain Raeburn and Allan~M. Sinclair.
\newblock The {$C^*$}-algebra generated by two projections.
\newblock {\em Math. Scand.}, 65(2):278--290, 1989.
\newblock URL: \url{http://www.mscand.dk/article.php?id=1231}.

\bibitem{Weaver2007}
Nik Weaver.
\newblock Set theory and {$C^*$}-algebras.
\newblock {\em Bull. Symbolic Logic}, 13(1):1--20, 2007.
\newblock \href {http://dx.doi.org/10.2178/bsl/1174668215}
  {\path{doi:10.2178/bsl/1174668215}}.

\bibitem{Wegge-Olsen1993}
N.~E. Wegge-Olsen.
\newblock {\em {$K$}-theory and {$C^*$}-algebras - a friendly approach}.
\newblock Oxford Science Publications. The Clarendon Press Oxford University
  Press, New York, 1993.

\bibitem{Weidmann1980}
Joachim Weidmann.
\newblock {\em Linear operators in {H}ilbert spaces}, volume~68 of {\em
  Graduate Texts in Mathematics}.
\newblock Springer-Verlag, New York, 1980.
\newblock Translated from the German by Joseph Sz{\"u}cs.

\end{thebibliography}
\bibliographystyle{plainurl}

\end{document}